\tikzset{->-/.style={decoration={  markings,  mark=at position #1 with
			{\arrow{>}}},postaction={decorate}}}
\tikzset{-<-/.style={decoration={  markings,  mark=at position #1 with
			{\arrow{<}}},postaction={decorate}}}
\theoremstyle{plain}
\newtheorem{theorem}{Theorem}[section]
\newtheorem{lemma}[theorem]{Lemma}
\newtheorem{corollary}[theorem]{Corollary}
\newtheorem{proposition}[theorem]{Proposition}
\newtheorem{conjecture}[theorem]{Conjecture}
\theoremstyle{definition}
\newtheorem{definition}[theorem]{Definition}
\newtheorem{example}[theorem]{Example}
\newtheorem{remark}[theorem]{Remark}
\numberwithin{equation}{section}
\newtheorem{definition-proposition}[theorem]{Definition-Proposition}
\providecommand{\add}{\mathop{\rm add}\nolimits}%
\providecommand{\per}{\mathop{\rm per}\nolimits}%
\providecommand{\thick}{\mathop{\rm thick}\nolimits}%
\numberwithin{equation}{section}%new added
\newcommand{\Hom}{\operatorname{Hom}}
\newcommand{\im}{\operatorname{Im}}     %%%%new add
\newcommand{\pvd}{\operatorname{pvd}}
\newcommand{\surf}{\mathbf{S}}
\newcommand{\Cone}{\operatorname{Cone}}
\providecommand{\add}{\mathop{\rm add}\nolimits}%
\providecommand{\per}{\mathop{\rm per}\nolimits}%
\providecommand{\thick}{\mathop{\rm thick}\nolimits}%
\renewcommand{\k}{\mathbf{k}}
\def\td{\Gamma^{(d)}}
\renewcommand{\c}{\mathscr{C}}
\renewcommand{\d}{\mathscr{D}}
\newcommand{\f}{\mathscr{F}}
\renewcommand{\t}{\mathscr{T}}
\newcommand{\rga}{\rightarrow}
\newcommand{\Rmone}{\uppercase\expandafter{\romannumeral1}}
\newcommand{\Rmtwo}{\uppercase\expandafter{\romannumeral2}}
\DeclareMathOperator{\dec}{dec}
\DeclareMathOperator{\red}{red}
\def\rn{node[red]{$\bullet$}}
\def\numbers{\begin{enumerate}[label=\arabic*{$^\circ$}.]}
	\def\ends{\end{enumerate}}
\begin{document}
%\special{dvipdfmx:config z 0} %取消pdf压缩，加快速度，最终版时去掉
%\bibliographystyle{unsrt}

\title {The quiver with superpotentials of a $d$-angulation of a marked surface}
\keywords{Generalized higher cluster categories, geometric model, cluster tilting object, $d $-angulation.}
%\date{\today}
\author{Bo Le and Bin Zhu}
\maketitle
\begin{abstract}
In this paper, we associate a quiver with superpotential to each $d$-angulation of a (unpunctured) marked surface. We show that, under quasi-isomorphisms, the flip of a $d$-angulation is compatible with Oppermann's mutation of (the Ginzburg algebra of) the corresponding quiver with superpotential, thereby partially generalizing the result in \cite{labardini}. Applying to the generalized $(d-2)$-cluster categories associated to this quiver with superpotential, we prove that some certain almost complete $(d-2)$-cluster tilting objects in the higher cluster category have exactly $d-1$ complements. 
\end{abstract}
\tableofcontents	
\section*{Introduction}
Cluster algebras were introduced by Fomin and Zelevinsky \cite{fz} around 2000. The geometric aspect of cluster theory was explored and developed by Fomin, Shapiro and Thurston \cite{fst}. They associated to each bordered surface with marked points a cluster algebra, and proved that the seeds of the cluster algebra are related
by a mutation if and only if the triangulations to which the seeds are associated are related
by a flip.
On the other hand, cluster categories of acyclic quivers were introduced by Buan, Marsh, Reineke, Reiten and Todorov \cite{bmrrt} (see also in
\cite{CCS06} for quivers of type $A$) in order to categorify cluster algebras, and have been investigated in many subsequent articles (cf. \cite{Rei,amiot11} for  surveys).

%To give a representation-theoretic interpretation of mutations, Derksen, Weyman, and Zelevinsky \cite{DWZ} introduced the notion of quivers with potentials and defined the mutations of such objects. Such concepts was deeply related with triangulated marked surfaces due to the work of Labardini \cite{labardini}, in which he first attempted to relate the two works (\cite{fst} and \cite{DWZ}) to each ideal triangulation of a bordered surface with marked points. He associated a quiver with potential to each ideal triangulation in such a way that, whenever two ideal triangulations are related by a flip of an arc, the respective quivers with potentials are related by a mutation with respect to the vertex corresponding to the flipped arc.

 Derksen, Weyman, and Zelevinsky \cite{DWZ} introduced the notion of quivers with potentials and defined the mutations of such objects. Then, Labardini \cite{labardini} first attempted to relate the two works (\cite{fst} and \cite{DWZ}) to each ideal triangulation of a bordered surface with marked points. He associated a quiver with potential to each ideal triangulation in such a way that, whenever two ideal triangulations are related by a flip of an arc, the respective quivers with potentials are related by a mutation with respect to the vertex corresponding to the flipped arc.
	
	For a positive integer $m$ and a finite acyclic quiver $Q$, the
	$m$-cluster category ${\mathcal {C}}_Q^{\tiny{(m)}}$ is constructed
	as the orbit category of ${\mathcal {D}}$$^b ({\rm mod} kQ)$ under
	the action $\tau^{-1}[m]$, where $[1]$ is the suspension functor and $\tau$ is the Auslander–Reiten translation. It was first defined in \cite{orbit} and has
	been studied in several articles \cite{KR07,KR08,Tho07}
	and others. Combinatorial descriptions of higher cluster categories
	of type $A_n$ and $D_n$ are studied in \cite{BM01} \cite{BM02}, and \cite{zhou-zhu,rigid} proved that there are exactly $m+1$
	non-isomorphic complements to an almost complete $m$-cluster tilting object. The geometric realizations of some special
	(higher) cluster categories have also been studied, e.g., \cite{TH,Sch,BM01,BM02,BT20,LJM}.
	
	For some special differential graded algebra $A$, Amiot\cite{amiot} and Guo\cite{guo1} defined generalized higher cluster categories. Note that the generalized higher cluster categories arise from a graded
	quiver with superpotential. Oppermann\cite{oppermann} gave a combinatorial mutation rule for non-positive differential graded quiver algebras. He applied his mutation rule to Ginzburg dg-algebras and immediately obtained the result of Keller and Yang \cite{keller-yang}, generalizing this result to arbitrary Calabi–Yau dimensions.
	
	This paper aims to construct a geometric model of the generalized higher cluster categories and, using this, to give a geometric description of the mutations of (the Ginzburg algebra of) the corresponding quiver with superpotential, which partially generalizes Labardini's work \cite{labardini}. Applying to the generalized $(d-2)$-cluster categories associated to this quiver with superpotential, we prove that some certain almost complete $(d-2)$-cluster tilting objects in the higher cluster category have exactly $d-1$ complements. In recent works \cite{CHQ23,CHQ24}, Christ, Haiden, and Qiu have studied the Ginzburg dg algebra associated with a mixed-angulation. They proved that the exchange graph of hearts is isomorphic to the flip graph.
        
    This paper is organized as follows:	In Section~1, we collect some basic concepts on generalized higher cluster categories and quivers with superpotentials. In Section~2 and Section~3, we introduce the quiver with superpotential associated to a $d$-angulation of 
    a (unpunctured) marked surface, and we show that the flip of a $d$-angulation is compatible with Oppermann's  mutation of (the Ginzburg algebra of) the corresponding quiver with superpotential. In Section~4, we build the generalized higher cluster category arising from a quiver with
    superpotential (associated to a $d$-angulation) and then discuss the number of complements for some almost complete $(d-2)$-cluster tilting objects. We note that there is a similar work by Jacquet-Malo \cite{jacquet-malo_construction_2024}. However, our construction is different from hers, we will give examples to explain the differences in Section~6.
\subsection*{Acknowledgements}
The authors are very grateful to Professor Steffen Oppermann and Dr. Merlin Christ  for their helpful discussions. They also thank Professor Yu Zhou for his careful reading of the manuscript. Additionally, they sincerely appreciate Professor Yu Qiu and Dr. Ping He for pointing out minor mistakes and offering valuable suggestions. The work is supported partially by National Natural Science Foundation
of China Grant No. 12031007, 12371034.
\section{Preliminaries}
Throughout this paper, we assume $\k$ to be an algebraically closed field and $d\geq 3$ is a positive integer.	
In this section, we recall some basic notions and results related to generalized higher cluster categories and quivers with superpotentials.
\subsection{Generalized higher cluster categories}
Let $A$ be a differential graded (dg for short) $\k$-algebra and $\d(A)$ the derived category of $A$. There are two triangulated subcategory of $\d(A)$, namely, 

\begin{itemize}
	\item[-] $\per(A)$ the \emph{perfect category} of $A$, i.e., the
	smallest triangulated subcategory of the derived category $\d (A)$ containing $A$ and stable under passage to direct summands;
	
	\item[-] $\pvd(A)$ be the \emph{perfectly valued derived category} of $A$, i.e., the subcategory of $\d(A)$ consists of objects with finite-dimensional total homology.
\end{itemize}
A dg algebra $A$ is called \emph{homologically smooth} if $A$ belongs to $\per(A^e)$ when considered as a bimodule over itself, where $A^e=A^{op}\otimes_\k A$. Homologically smooth is an invariant under derived Morita equivalence (quasi-isomorphic), see \cite[Remark~8.1]{KN13} or \cite[Lemma~2.6]{dgmorita}.
\begin{lemma}[{\cite{guo1,keller08}}]
	Let $A$ be a homologically smooth dg algebra and
	\begin{equation*}
		\Omega = \mathbb{R}\Hom_{A^{e}}(A,A^{e})
	\end{equation*}
    be an object in $\d(A^{e})$. Then for any $L$ in $\d(A)$ and any $M$ in $\d_{fd}(A)$, there is a canonical isomorphism
	\begin{equation*}
		D\Hom_{\d(A)}(M,L) \stackrel{\simeq}{\longrightarrow} \Hom_{\d(A)}(L\stackrel{\mathbb{L}}{\otimes}_{A} \Omega, M).
	\end{equation*}
\end{lemma}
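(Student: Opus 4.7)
The plan is to reduce to the case $L=A$ by a dévissage argument and then verify the base case via a direct computation that hinges on the fact that $M$ has finite--dimensional total cohomology together with the homological smoothness of $A$.

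First I would construct a natural transformation
\[
\eta_{L,M}\colon \Hom_{\d(A)}\!\bigl(L\stackrel{\mathbb{L}}{\otimes}_A \Omega,\;M\bigr)\;\longrightarrow\;D\Hom_{\d(A)}(M,L),
\]
functorial in $L\in\d(A)$. The map comes from the evaluation/trace pairing associated with the inverse dualizing bimodule $\Omega$: using homological smoothness, $A$ is perfect as an $A^e$-module, so $\Omega=\mathbb{R}\Hom_{A^e}(A,A^e)$ is a two-sided dualizing complex in an appropriate sense, and the pairing $(L\otimes_A\Omega)\otimes_\k \Hom(M,L)\to\text{(something ending in $\k$)}$ produces $\eta$. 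Both source and target of $\eta$ are cohomological functors of $L$ that send triangles to long exact sequences and commute with coproducts: the source does so by tensor--hom adjunction; for the target, one uses that $M\in\pvd(A)$ is compact in $\d(A)$ when $A$ is homologically smooth, so $\Hom_{\d(A)}(M,-)$ commutes with coproducts, and $D(-)$ converts the resulting (degree-wise finite--dimensional) coproduct into a product.

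Having this, the full subcategory of $L$'s on which $\eta_{L,M}$ is an isomorphism is a localizing triangulated subcategory of $\d(A)$ closed under summands. Since $A$ compactly generates $\d(A)$, it suffices to prove $\eta_{A,M}$ is an isomorphism, i.e.
\[
\Hom_{\d(A)}(\Omega,M)\;\simeq\;D\Hom_{\d(A)}(M,A).
\]
For the base case I would use finite-dimensionality: since $M\in\pvd(A)$, we have $M\simeq D(DM)$ with $DM\in\pvd(A^{op})$, so by tensor--hom duality
\[
\Hom_{\d(A)}(\Omega,M)\;\simeq\;\Hom_{\d(A)}\!\bigl(\Omega,\,D(DM)\bigr)\;\simeq\;D\bigl(\Omega\stackrel{\mathbb{L}}{\otimes}_A DM\bigr).
\]
It remains to identify $\Omega\otimes_A^{\mathbb{L}} DM$ with $\mathbb{R}\Hom_A(M,A)$. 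Choosing a perfect resolution $\mathbb{P}\to A$ as $A^e$-module, one has $\Omega\simeq \Hom_{A^e}(\mathbb{P},A^e)$ in $\d(A^e)$, and for perfect $\mathbb{P}$ the canonical map
\[
\Hom_{A^e}(\mathbb{P},A^e)\stackrel{\mathbb{L}}{\otimes}_{A^e}(A\otimes_\k DM)\;\longrightarrow\;\Hom_{A^e}(\mathbb{P},\,A\otimes_\k DM)
\]
is an isomorphism. Unfolding the left side gives $\Omega\otimes_A^{\mathbb{L}}DM$, while the right side rewrites as $\mathbb{R}\Hom_{A^e}(A,A\otimes_\k DM)\simeq \mathbb{R}\Hom_A(M,A)$ by tensor--hom adjunction. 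Composing all isomorphisms yields the desired $\Hom_{\d(A)}(\Omega,M)\simeq D\mathbb{R}\Hom_A(M,A)$, and checking functoriality in $L$ through the dévissage finishes the proof.

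The main obstacle is in the base-case computation: one must keep rigorous track of the two compatible $A$-module structures on the bimodule $\Omega$ and make sure the tensor products and $\Hom$'s are taken with respect to the correct side. The fact that $A\in\per(A^e)$ enters essentially here, as without perfectness of $\mathbb{P}$ the swap between $\Hom(\mathbb{P},-)\otimes(-)$ and $\Hom(\mathbb{P},-\otimes-)$ fails. A secondary subtlety is confirming that $M\in\pvd(A)$ is compact in $\d(A)$, which relies on the equivalence (under homological smoothness) $\pvd(A)\subseteq\per(A)$; this is what legitimizes the dévissage step.
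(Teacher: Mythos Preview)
The paper does not give its own proof of this lemma; it is merely quoted from \cite{guo1,keller08}. Your proposal is essentially the standard argument found in those references (see in particular Keller's Lemma~4.1 in \cite{keller08}): construct a natural comparison map, reduce to the generator $L=A$ by a localizing-subcategory argument, and verify the base case using that $A\in\per(A^e)$ so that $\mathbb{R}\Hom_{A^e}(A,-)$ and $\Omega\otimes^{\mathbb{L}}_{A^e}(-)$ agree.

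Two small points of care. First, neither functor in $L$ literally ``commutes with coproducts'': both send coproducts of $L$'s to \emph{products}, and that is what makes the class of $L$'s on which $\eta_{L,M}$ is an isomorphism closed under coproducts. Second, you invoke $\pvd(A)\subseteq\per(A)$ to obtain compactness of $M$; this inclusion is indeed a direct consequence of homological smoothness (tensor $M\simeq M\otimes_A^{\mathbb{L}}A$ with a finite $A^e$-perfect resolution of $A$), but be sure you establish it independently and do not appeal to the present lemma, so as to avoid circularity. With these clarifications your outline is correct.
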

Suppose that $A$ has the following properties $(\star)$:
\begin{enumerate}
	\item $A$ is homologically smooth;
	\item the $p$-th homology  $H^pA=0$ for each positive integer $p\in\mathbb{Z}_{\geq 0}$;
	\item $dim_k H^0A <\infty$;
	\item $A$ is $(m+2)$-Calabi–Yau as a bimodule, i.e. there is an isomorphism in $\d(A^e)$
	\[\mathbb{R}\Hom_{A^{e}}(A,A^{e})\simeq A[-m-2].\]
\end{enumerate}
Then, by \cite[Lemma~2.1]{keller08}, we have $\pvd(A)$ is a thick subcategory of $\per(A)$. 
\begin{definition}[{\cite{amiot,guo1}}]
	The \emph{generalized higher cluster category} of $A$ is defined as the  triangle quotient
	\[\c_A:=\per(A)/\pvd(A).\]
\end{definition}

\begin{theorem}[{\cite{amiot,guo1}}]
	Let $A$ be a dg $k$-algebra with the properties $(\star)$. Then
	\begin{enumerate}
		\item The category $\c_A$ is Hom-finite and $(m+1)$-Calabi–Yau, i.e, we have 
		\[\Hom_{\c_A}(X,Y)\cong D\Hom_{\c_A}(Y,X[m+1]), \forall X,Y\in\c_A.\]
		
		\item The canonical image $T$ of the silting object $A\in\per(A)$ under the quotient is an $m$-cluster tilting object in $\c_A$, i.e., 
		\[\Hom_{\c_A}(T,T[r])=0, \ r=1, \cdots , m,\]
		and for each object $L$ in $\c_A$, $\Hom_{\c_A}(L,T[r])=0,1\leq r\leq m$ only if $L\in\add T$.

		\item The endomorphism algebra of $T$ over $\c_A$ is isomorphic to  $H^0A$.
	\end{enumerate}
\end{theorem}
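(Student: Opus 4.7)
The plan is to follow the Amiot--Guo approach: construct an explicit fundamental domain for the Verdier quotient $\per(A)/\pvd(A)$ inside $\per(A)$, transfer the Calabi--Yau duality of $\pvd(A)$ to $\c_A$, and then read off the cluster tilting object from the image of $A$. First I would equip $\d(A)$ with the standard $t$-structure coming from cohomology; conditions (2) and (3) of $(\star)$ show that it restricts to a bounded $t$-structure on $\pvd(A)$ with finite-length heart, and condition (1) together with the inclusion $\pvd(A)\subseteq\per(A)$ recalled in the excerpt ensures that it also restricts to $\per(A)$. I would then introduce the fundamental domain
\[\f=\{X\in\per(A)\mid H^pX=0 \text{ for }p>0\text{ and }p<-m\},\]
and prove (a) every object of $\c_A$ is isomorphic to the image of some object of $\f$, obtained by iteratively truncating and observing that the resulting $\pvd(A)$ tails become zero in $\c_A$; and (b) for $X,Y\in\f$ the canonical map $\Hom_{\per(A)}(X,Y)\to\Hom_{\c_A}(X,Y)$ is surjective with controlled kernel. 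Hom-finiteness then follows immediately because each such $\Hom_{\per(A)}(X,Y)$ is finite-dimensional.

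Second, for the $(m+1)$-Calabi--Yau property, the preceding lemma combined with condition (4) gives $D\Hom(M,L)\cong\Hom(L,M[m+2])$ whenever one factor lies in $\pvd(A)$, producing a Serre functor $[m+2]$ on $\pvd(A)$. The technical heart is to compute $\Hom_{\c_A}(X,Y[m+1])$ for $X,Y\in\f$ by fitting $Y[m+1]$ into an approximation triangle $Y'\to Y[m+1]\to Y''\to$ with $Y''\in\pvd(A)$ and $Y'\in\f$, then applying the bimodule duality to convert the computation into $D\Hom_{\c_A}(Y,X)$; the shift from $[m+2]$ down to $[m+1]$ arises because one cohomological degree is absorbed when passing to the Verdier quotient.

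For the cluster tilting and endomorphism statements, $A$ is a silting object in $\per(A)$ with $\End_{\per(A)}(A)=H^0A$ by condition (2); its image $T$ therefore satisfies $\Hom_{\c_A}(T,T[r])=\Hom_{\per(A)}(A,A[r])=H^rA=0$ for $1\leq r\leq m$, because in this range no nonzero morphism factors through $\pvd(A)$, and the same computation identifies $\End_{\c_A}(T)$ with $H^0A$. The converse characterization of $\add T$ follows by combining the hypothesis $\Hom_{\c_A}(L,T[r])=0$ for $1\leq r\leq m$ with the Calabi--Yau duality just established to obtain vanishing of $\Hom_{\c_A}(T,L[r'])$ in the symmetric range, and then invoking that $T$ generates $\c_A$ to conclude $L\in\add T$.

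The main obstacle is the second paragraph: tracking morphisms through the Verdier quotient $\per(A)/\pvd(A)$ via the fundamental domain and matching them with the bimodule duality so that the resulting pairing defines a genuine Serre functor on $\c_A$. The bookkeeping of which cohomological pieces remain visible after quotienting out $\pvd(A)$, together with the shift-by-one compatibility with the Calabi--Yau structure of $\pvd(A)$, is the delicate part that distinguishes this theorem from formal categorical yoga.
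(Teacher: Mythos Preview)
The paper does not give its own proof of this theorem: it is stated as a citation of \cite{amiot,guo1} in the preliminaries and used as a black box. So there is no in-paper argument to compare against.

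Your outline is a faithful sketch of the Amiot--Guo strategy from the cited references, with one technical slip worth flagging. The fundamental domain in \cite{amiot,guo1} is not defined by the cohomological vanishing $H^pX=0$ for $p>0$ and $p<-m$; it is $\per(A)_{\le 0}\cap{}^{\perp}\per(A)_{\le -m-1}$, equivalently the iterated extension closure $A\ast A[1]\ast\cdots\ast A[m]$ (this is the form the present paper quotes later as Theorem~\ref{thm4.3} from \cite{iyama-yang}). The condition $X\in{}^{\perp}\per(A)_{\le -m-1}$ is a vanishing of \emph{morphisms}, not of cohomology groups of $X$, and the two do not coincide for a dg algebra whose cohomology is spread over many negative degrees. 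Your truncation argument in step (a) and the morphism comparison in step (b) still work once $\f$ is defined correctly, so the error is local and does not derail the plan.
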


\subsection{Quivers with superpotentials}
In this subsection, we briefly recall some basic concepts of quivers with superpotentials from \cite{oppermann}.

A (non-positive) graded quiver is a finite quiver $Q$ together with a map $|-|$ from the set of arrows to $\mathbb{Z}_{\leq0}$. By abuse of notion, we denote by $Q$ a graded quiver.

A dg quiver algebra is a pair $(\k Q,d)$, where $Q$ is a graded quiver and $d$ is the degree-one endomorphism of $\k Q$ such that $d^2=0$.

\begin{definition}\label{def:QsP}
	A quiver with superpotential (QsP for short) is a pair $(Q,W)$, where
	\begin{itemize}
		\item $Q$ is a graded quiver with arrows in degrees $\{0,-1,\cdots ,2-d \}$ such that, for each arrow $\varphi$, there is an arrow (up to sign) $\varphi^{op}$ of degree $2-d-|\varphi|$ in the opposite direction with $\varphi$. We moreover requires that $(\varphi^{op})^{op}=-(-1)^{|\varphi||\varphi^{op}|}\varphi$.
		
		\item $W$, called a superpotential, is a linear combination of cycles up to (signed) cyclic permutation, which is homogeneous of degree $3-d$ and $\{W, W\}=0$ (see \cite{oppermann,van} for more details).
	\end{itemize}
\end{definition}

Let $c$ be a cycle of a graded quiver $Q$. For each $\varphi\in Q_1$, the \emph{cyclic derivative} of $c$ is given as $\partial_{\varphi} c = \sum_{c = p \varphi q} (-1)^{|p||\varphi q|} qp$. Such an operation can be extended  to  linear combinations of cycles.
\begin{definition}[{\cite[Definition~6.1]{oppermann}}]\label{def:Ginz}
	Let $(Q,W)$ be a QsP and $\bar{Q}$ the graded quiver obtained from $Q$ by adding a loop $l_i$ of degree $1-d$ at each vertex $i\in Q_0$. The $d$-dimensional Ginzburg algebra $\td(Q,W)$ associated to $(Q,W)$ is the dg algebra $(\k\bar{Q},d)$ with
	\begin{align}\label{d}
		d \varphi & = \partial_{\varphi^{\rm op}} W && \varphi \in Q_1, \\
		d l_i & = \sum_{\varphi} \varphi\varphi^{op}&& i \in Q_0.
	\end{align}
	where the sum runs over all arrows $\varphi$ of $Q$ starting in vertex $i$. 
\end{definition}

Let $i$ be a vertex in $Q_0$ such that there is no loop of degree 0 at $i$. Denote
\[A=\{\alpha \in Q_1 |\alpha:j \rga i ,  |\alpha|=0 \}.\]

Let $p$ be a (linear combination of) path(s) in $Q$. The \emph{reduction} $\red(p)$ of $p$ is obtained from $p$ by removing all paths that end in an arrow $\alpha \in A$. Denote by $p/\alpha$ the linear combination obtained from $p$ by remembering paths that end with $\alpha $ and removing the $\alpha $. Then we have 
\begin{align*}
	p = \red(p) + \sum_{\alpha \in A} p/\alpha \; \alpha.
\end{align*}

Let $\Delta = 1 -(\sum_{\alpha \in A} \alpha^{-1} \alpha)$. The \emph{decorated} version $\dec(p)$ of $p$, is obtained from $p$ by introducing $\Delta$ whenever this is allowed (see \cite[Construction~3.1]{oppermann} for more details).

For each cycle $c$ that does not start with an arrow $\alpha$ in $A$ (this may always be achieved by cyclic permutation), denote by
\begin{align*}
	\dec_{ cyc} c = \begin{cases} \dec c & \text{if $c$ does not start and end in $i$;} \\ (-1)^d \dec c - \sum \alpha (\dec c) \alpha^{-1} & \text{if $c$ starts and ends in $i$.} \end{cases}
\end{align*} 

\begin{definition}[{\cite[Definition~6.3]{oppermann}}]\label{def:mut QsP}
	Let $(Q,W)$ be a QsP and $i$ be a vertex of $Q$ that admits no loops of degree $0$. Then the mutation $\mu_i(Q,W)=(Q_M,W_M)$ is constructed from $(Q,W)$ via the following steps, where $||\alpha||$ denotes the degree of $\alpha$ in $Q_M$.
	\begin{enumerate}
		\item For each $\alpha\in Q_1$ with $s(\alpha)=i$ (resp. $t(\alpha)=i$), we have $||\alpha||=|\alpha|-1$ (resp. $||\alpha||=|\alpha|+1$).
		%Increase by 1 the degree of all arrows ending in vertex $i$,  and decrease the degree of all arrows starting in $i$;
		\item For any arrow $\alpha\in A$,  $\alpha$ and $\alpha^{op}$ are replaced by $\alpha^{*}$ and $(\alpha^{*})^{op_M}$ respectively, where $\alpha^*$ has the same direction as $\alpha^{op}$ and $||\alpha^{*}||=0$.			
		\item For any arrow $\alpha\in A$ and any $\varphi\in Q_1$ with $s(\varphi)=i$ and $|\varphi|\geq 3-d$, add a new arrow which is the
		formal composition $\alpha\varphi$, and its opposite $(\alpha\varphi)^{op_M}=-\varphi^{op}\alpha^{-1}$ and $||\alpha \varphi||=|\varphi|$.
		\item For any loop $\varphi$ at $i$ in $Q$, and any two
		arrows $\alpha,\beta\in A$, add a new formal composite arrow $\alpha \varphi \beta^{-1}$ of degree $||\alpha \varphi \beta^{-1}||=|\varphi|$, and its opposite $(\alpha \varphi \beta^{-1})^{op_M} = \beta\varphi^{op} \alpha^{-1}$.
		\item The superpotential $W_M$ is given by
		\[W_{\rm M} = \dec_{ cyc} W + \sum_{\alpha, \varphi} \alpha \dec( \varphi \varphi^{op_M} ) \alpha^*,\]
		where the sum runs over all $\alpha\in A$ and all arrows $\varphi\in Q_1$ with $s(\varphi)=i$ and $|\varphi|\geq 3-d$.
	\end{enumerate}
\end{definition}
Denote by $\chi(\cdot)$ the characteristic function of $i$, that is
\begin{align*}
	\chi (j) = \begin{cases} 1 &  j = i; \\ 0 &  j \neq i .\end{cases}
\end{align*}
Then the cyclic derivatives of the mutated QsP can be calculated via the cyclic derivatives of the original one. To be more precisely, $\partial W_M$ is given as the following (see \cite[Lamma~6.5]{oppermann}).
\begin{align*}
	\partial_{\varphi^{op_M}} W_M&=
	\begin{cases}
		\dec \red \partial_{\varphi^{op}} W & , s( \varphi)\neq i ,\varphi \in Q \cap Q_M ;\\
		- \dec \partial_{\varphi^{op}} W + \sum\limits_{\alpha} \alpha^* (\alpha \varphi) & ,s( \varphi)=i ,\varphi \in Q \cap Q_M .
	\end{cases}\\
	\partial_{(\alpha^*)^{op_M}} W_M & = 0  ;\\
	\partial_{\alpha^*} W_M & = \sum_{\varphi} \alpha \dec( \varphi \varphi^{op_M} )  ;\\
	\partial_{(\alpha \varphi)^{op_M}} W_M & = \alpha \dec \red (\partial_{\varphi^{op}} W) ; \\
	\partial_{(\varphi \alpha^{-1})^{op_M}} W_M& = (\partial_{\varphi^{op_M}} W_M ) \alpha^{-1} + (-1)^{\|   \varphi \|} \varphi \alpha^* - (-1)^{\chi(s( \varphi))} \dec (\partial_{\varphi^{op}} W / \alpha)  ;\\
	\partial_{(\alpha \varphi \beta^{-1})^{op_M}} W_M & = (\partial_{(\alpha \varphi)^{op_M}} W_M ) \beta^{-1} + (-1)^{ \| \alpha \varphi \|} \alpha \varphi \beta^* - \alpha \dec (\partial_{\varphi^{op}} W / \beta) .
\end{align*}
Then, we have the differential of the mutated Ginzburg dg algebra. 
\begin{proposition}\label{prop:diff of mut-Ginz}
	The differential $d_M$ of the Ginzburg algebra $\td(Q_M,W_M)$ is given by the following.
	\begin{align}
		d_M (\varphi)&=
		\begin{cases}
			\dec (\red (d\varphi )) & , s( \varphi)\neq i ,\varphi \in Q \cap Q_M ;\\
			- \dec(d\varphi) + \sum\limits_{\alpha} \alpha^* (\alpha \varphi) & ,s( \varphi)=i ,\varphi \in Q \cap Q_M .
		\end{cases}\\
		d_M(\alpha^*) & = 0  ;\\
		d_M((\alpha^*)^{op_M}) & = \sum_{\varphi} \alpha \dec( \varphi \varphi^{op_M} )  ;\\
		d_M(\alpha \varphi) & = \alpha \dec (\red (d\varphi)) ; \\
		d_M(\varphi \alpha^{-1})& = d_M(\varphi ) \alpha^{-1} + (-1)^{\|   \varphi \|} \varphi \alpha^* - (-1)^{\chi(s( \varphi))} \dec ((d\varphi) / \alpha)  ;\\
		d_M(\alpha \varphi \beta^{-1}) & = d_M(\alpha \varphi) \beta^{-1} + (-1)^{ \| \alpha \varphi \|} \alpha \varphi \beta^* - \alpha \dec ((d\varphi)/ \beta) .
	\end{align}
\end{proposition}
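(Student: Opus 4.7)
The strategy is to view the proposition as a direct translation of Oppermann's formulas for the cyclic derivatives of $W_M$ (the list recalled in the excerpt just above) into the language of the Ginzburg differential. Recall from the Ginzburg construction that for any arrow $\varphi$ in the underlying graded quiver of a QsP $(Q,W)$, the differential satisfies $d\varphi = \partial_{\varphi^{op}} W$. Applying this verbatim to $(Q_M, W_M)$ gives $d_M \varphi = \partial_{\varphi^{op_M}} W_M$ for every arrow $\varphi$ of $Q_M$. The plan is to substitute, case by case, each of the six formulas for $\partial W_M$ into this identity and rewrite the original $\partial_{\varphi^{op}} W$'s as $d\varphi$.

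I would organize the verification according to the six types of arrows produced by Definition \ref{def:mut QsP}: (a) surviving arrows $\varphi \in Q \cap Q_M$, split by whether $s(\varphi) \neq i$ or $s(\varphi) = i$; (b) the new arrows $\alpha^*$ attached to each $\alpha \in A$; (c) their opposites $(\alpha^*)^{op_M}$; (d) the formal composites $\alpha \varphi$ arising from non-loop arrows $\varphi$ starting at $i$; (e) the opposites of these, of the form $\varphi \alpha^{-1}$; and (f) the composites $\alpha \varphi \beta^{-1}$ obtained from loops at $i$. Cases (a), (d), (e), (f) are then mechanical substitutions and drop out of the corresponding cyclic-derivative formulas immediately after replacing $\partial_{\varphi^{op}} W$ by $d\varphi$. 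Case (b) uses only that $\partial_{(\alpha^*)^{op_M}} W_M = 0$.

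Case (c) is the one I expect to require the most care. Here we have $d_M((\alpha^*)^{op_M}) = \partial_{((\alpha^*)^{op_M})^{op_M}} W_M$, and the QsP opposite axiom $(\varphi^{op})^{op} = -(-1)^{|\varphi||\varphi^{op}|}\varphi$ must be invoked, together with $\|\alpha^*\| = 0$, to identify this cyclic derivative with $\partial_{\alpha^*} W_M = \sum_\varphi \alpha \dec(\varphi \varphi^{op_M})$ up to the correct overall sign. Tracking these opposite-sign conventions (and making sure they agree with the sign choices used throughout Definition \ref{def:mut QsP}, in particular the convention $(\alpha \varphi)^{op_M} = -\varphi^{op}\alpha^{-1}$) is the main technical obstacle; I would address it by fixing one sign convention at the outset and checking consistently through all six cases.

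To finish, I would note that the Ginzburg differential is determined on all generators once it is specified on the arrows and on the new degree $1-d$ loops $l_j \in \bar{Q}_M$; for the latter, one simply reads off $d_M l_j = \sum_{\varphi} \varphi \varphi^{op_M}$ directly from the Ginzburg construction applied to $(Q_M, W_M)$, so no separate argument is needed. The extension to the whole algebra is by the graded Leibniz rule, and $d_M^2 = 0$ follows from $\{W_M, W_M\} = 0$, which holds because $(Q_M, W_M)$ is again a QsP by Oppermann's mutation theorem. Since no ingredient beyond the explicit cyclic-derivative formulas enters the argument, the proof is essentially sign-careful bookkeeping.
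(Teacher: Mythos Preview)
Your proposal is correct and matches the paper's approach exactly: the paper does not give an explicit proof of this proposition, but states it immediately after the list of cyclic-derivative formulas $\partial W_M$ with the connective ``Then, we have the differential of the mutated Ginzburg dg algebra,'' making clear that it is meant as a direct translation via $d_M\varphi=\partial_{\varphi^{op_M}}W_M$. Your case-by-case substitution, including the sign care you flag in case~(c), is precisely the bookkeeping the paper leaves to the reader.
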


Recall from \cite{AI} that a basic object $T$ in a triangulated category $\t$ is called a \emph{silting} object if all its positive self-extension vanishes and $\thick T=\t$. One important feature of silting objects is that they admit a notion of mutation.
\begin{theorem}[{\cite{AI}}]\label{thm:silt-mutation}
	Let $T=T_1\oplus T_2$ be a silting object in $\t$ with $T_1$ an indecomposable direct summand. Assume that $T_1$ has a left $\add T_2$-approximation $T_1\xrightarrow{f}\tilde{T_2}$. Extend $f$ to be the following triangle
	\begin{equation}\label{1.9}
		T_1\xrightarrow{f}\tilde{T_2}\rga \Cone(f) \rga T_1[1].
	\end{equation}
	Then $\mu_{T_1}(T):=\Cone(f)\oplus T_2$ is also a silting object in $\t$.
\end{theorem}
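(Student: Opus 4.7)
My plan is to verify the two defining properties of a silting object for $\mu_{T_1}(T) = \Cone(f) \oplus T_2$: vanishing of all positive self-extensions, and $\thick \mu_{T_1}(T) = \t$. The second is immediate, since the triangle~\eqref{1.9} places $T_1$ in $\thick(\Cone(f) \oplus \tilde{T_2}) \subseteq \thick \mu_{T_1}(T)$, so $T = T_1 \oplus T_2$ lies in $\thick \mu_{T_1}(T)$, and the hypothesis $\thick T = \t$ forces equality. The bulk of the work is the positive-Ext vanishing, which I would organize by the four pairwise vanishing statements among the summands $\Cone(f)$ and $T_2$.

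Three of the four cases are routine applications of long exact sequences obtained by applying $\Hom_\t(-,-)$ to~\eqref{1.9} in one variable. The case $\Hom_\t(T_2, T_2[n])=0$ for $n\geq 1$ is direct from the silting of $T$. For $\Hom_\t(T_2, \Cone(f)[n])$, applying $\Hom_\t(T_2, -)$ to~\eqref{1.9} yields a sequence flanked by $\Hom_\t(T_2, \tilde{T_2}[n])$ and $\Hom_\t(T_2, T_1[n+1])$, both vanishing for $n \geq 1$. The case $\Hom_\t(\Cone(f), T_2[n])$ for $n \geq 2$ is symmetric: applying $\Hom_\t(-, T_2[n])$ produces flanking terms $\Hom_\t(\tilde{T_2}, T_2[n])$ and $\Hom_\t(T_1[1], T_2[n]) = \Hom_\t(T_1, T_2[n-1])$, both vanishing in that range.

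The crucial case is $\Hom_\t(\Cone(f), T_2[1])$, where the left $\add T_2$-approximation hypothesis on $f$ is essential. The relevant exact sequence reads
\[\Hom_\t(\tilde{T_2}, T_2) \xrightarrow{f^*} \Hom_\t(T_1, T_2) \to \Hom_\t(\Cone(f), T_2[1]) \to \Hom_\t(\tilde{T_2}, T_2[1]) = 0.\]
Since every morphism from $T_1$ to an object of $\add T_2$ factors through $f$, the map $f^*$ is surjective, forcing $\Hom_\t(\Cone(f), T_2[1]) = 0$. This is the only step that genuinely uses the approximation hypothesis, and I expect it to be the main conceptual point of the argument.

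With $\Hom_\t(\Cone(f), T_2[n]) = 0$ in hand for all $n > 0$, I would obtain $\Hom_\t(\Cone(f), \Cone(f)[n]) = 0$ by applying $\Hom_\t(-, \Cone(f)[n])$ to~\eqref{1.9}. The term $\Hom_\t(\tilde{T_2}, \Cone(f)[n])$ vanishes by the preceding case; for $n \geq 2$, the term $\Hom_\t(T_1[1], \Cone(f)[n]) = \Hom_\t(T_1, \Cone(f)[n-1])$ vanishes as well, since applying $\Hom_\t(T_1, -)$ to~\eqref{1.9} sandwiches it between two zeros from the silting of $T$. The remaining subtle case is $n = 1$, which I would handle by showing the connecting map $\Hom_\t(T_1, \Cone(f)) \to \Hom_\t(\Cone(f), \Cone(f)[1])$ is zero: writing \eqref{1.9} as $T_1 \xrightarrow{f} \tilde{T_2} \xrightarrow{b} \Cone(f) \xrightarrow{c} T_1[1]$, every $g \colon T_1 \to \Cone(f)$ factors as $g = b \circ g'$ with $g' \colon T_1 \to \tilde{T_2}$ (because $\Hom_\t(T_1, T_1[1]) = 0$), and then $g[1]\circ c = b[1] \circ (g'[1] \circ c)$ lies in $b[1] \circ \Hom_\t(\Cone(f), \tilde{T_2}[1]) = 0$ by the already-established vanishing. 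The remaining diagram chases are routine bookkeeping.
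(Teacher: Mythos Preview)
Your proof is correct and follows the standard argument for silting mutation. Note, however, that the paper does not actually prove this theorem: it is stated as a cited result from \cite{AI} and used as background. There is therefore no proof in the paper to compare against. Your argument is essentially the one given in the original reference, so nothing is missing.
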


The silting mutation and dg quiver mutation are compatible in the following sense.
\begin{theorem}[{\cite{oppermann}}]\label{thm:1.9}
	Let $(Q,W)$ be a QsP. Then the derived endomorphism
	ring of the silting mutation of $(k\bar{Q},d)$ in $i$ is the Ginzburg algebra of $\mu_i(Q,W)$, i.e., we have the following commutative diagram up to quasi-isomorphism
	$$\xymatrix@C=6pc{(Q,W) \ar[r]^{\text{Ginzburg}}\ar[d]^{\mu_i}&(k\bar{Q},d)\ar[d]^{\text{silting mutation}}\\
	\mu_i(Q,W) \ar[r]^{\text{Ginzburg}}&(kM,\partial).}$$
\end{theorem}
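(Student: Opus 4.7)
The plan is to build the silting mutation at $P_i := e_i\Gamma$ explicitly as a two-term twisted complex and then identify its derived endomorphism dg algebra with $\td(\mu_i(Q,W))$. The indecomposable summands $\{P_j\}_{j\in Q_0}$ of $\Gamma := \td(Q,W)$ form a silting object, and the arrows $\alpha\in A$ of degree zero ending at $i$ assemble into a natural map
$$f\colon P_i \longrightarrow \tilde P := \bigoplus_{\alpha\in A} P_{s(\alpha)}.$$
I would first check that $f$ is a left $\add\bigl(\bigoplus_{j\neq i}P_j\bigr)$-approximation. The absence of loops of degree $0$ at $i$ together with the non-positivity of the grading forces every morphism out of $P_i$ into $P_j$ (for $j\neq i$) to factor through $f$ up to homotopy; this is exactly the content of the reduction operation $\red$ at the level of the superpotential. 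The cone $T_i := \Cone(f)$ then gives a new silting object $T := T_i \oplus \bigoplus_{j\neq i} P_j$ by \Cref{thm:silt-mutation}.

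Next, I would compute $\mathbb{R}\Hom_\Gamma(T,T)$ using this two-term model for $T_i$ and match the generators with \Cref{def:mut QsP}. Each $\alpha\in A$ yields the new arrow $\alpha^*$ of degree $0$ from the structure map between $P_{s(\alpha)}$ and $T_i$; each arrow $\varphi\in Q_1$ with $s(\varphi)=i$ and $|\varphi|\geq 3-d$ produces the formal composite $\alpha\varphi$ by pairing the $P_i$-component of $T_i$ with $\varphi$; and each loop $\varphi$ at $i$ in turn produces the composites $\varphi\alpha^{-1}$ and $\alpha\varphi\beta^{-1}$ from the two ends of $T_i$. The degree shifts $\|\alpha\| = |\alpha|\pm 1$ follow from the internal differential of $T_i$ having cohomological degree one, while the idempotent $\Delta = 1 - \sum_\alpha \alpha^{-1}\alpha$ underlying the decoration $\dec$ should manifest as the projector onto the $P_i$-summand of $T_i$, up to the contribution of the Ginzburg loop $l_i$.

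Finally, I would read off the differential $d_M$ by applying the Leibniz rule to compositions in the twisted complex and compare with \Cref{prop:diff of mut-Ginz} case by case. The main obstacle I expect is pure sign bookkeeping: the Koszul signs in the two-term complex must reconcile with the QsP convention $(\varphi^{op})^{op} = -(-1)^{|\varphi||\varphi^{op}|}\varphi$ and with the subtle case split in $\dec_{cyc}$ when a cycle both begins and ends at $i$. A secondary difficulty is ruling out spurious generators: one must invoke the Calabi--Yau property of $\Gamma$ to argue that $\mathbb{R}\Hom_\Gamma(T,T)$ is spanned by precisely the arrows of $\mu_i(Q,W)$ listed in \Cref{def:mut QsP} and no others. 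Once these matchings are in place, the two non-positively graded dg algebras agree on generators, relations, and differentials, hence are quasi-isomorphic.
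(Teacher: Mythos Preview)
The paper does not prove this statement at all: it is quoted verbatim as a theorem from \cite{oppermann} and used as a black box in the proof of \Cref{lemma4.9}. So there is no ``paper's own proof'' to compare against.

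That said, your outline is exactly the strategy Oppermann uses in the cited reference. The silting mutation is realized as the cone on the left approximation $P_i\to\bigoplus_{\alpha\in A}P_{s(\alpha)}$, the derived endomorphism algebra of the mutated object is computed as a dg quiver algebra, and the formal composites $\alpha\varphi$, $\varphi\alpha^{-1}$, $\alpha\varphi\beta^{-1}$ together with the decoration $\dec$ arise precisely as you describe. The sign bookkeeping and the identification of $\Delta$ with a projector are indeed the technical heart of Oppermann's argument. One small correction: you do not need the Calabi--Yau property to rule out spurious generators, since Oppermann's mutation rule is stated and proved for arbitrary non-positive dg quiver algebras, and only afterwards specialized to Ginzburg algebras; the generating set for $\mathbb{R}\Hom_\Gamma(T,T)$ comes directly from the two-term model and the semisimplicity of the degree-zero part over $\k Q_0$.
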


\section{$d$-angulation of a marked surface}

In this section, for each $d$-angulation $D$ of a (unpunctured) marked surface $\surf$, we associate it a quiver with superpotential $(Q_D,W_D)$. We show that the flip of $D$ is compatible with the mutation (see Definition~\ref{mutation}) of $(Q_D,W_D)$.
\begin{definition}
	A \emph{(unpunctured) marked surface} $\surf$ is a pair $(S,M)$, where 
	\begin{itemize}
		\item $S$ is a compact connected oriented Riemann surface with non-empty boundary $\partial S$;
		
		\item $M\subset\partial S$ is a finite set of \emph{marked points}  such that each component of $\partial S$ has at least one marked point. 
	\end{itemize}
\end{definition}

An (ordinary) arc in $(S,M)$  is a curve $\gamma:[0,1]\rga S$ such that: 
\begin{enumerate}
	\item $\gamma(0),\gamma(1)\in M$ and $\gamma(t)\in S\setminus\partial S, t \in (0,1)$;
	\item $\gamma$ is neither null homotopic nor homotopic to a boundary segment;
	\item $\gamma(t_1)\neq\gamma(t_2)$ for any $t_1,t_2\in(0,1)$.
\end{enumerate}

All arcs are considered under their isotopy classes. Two arcs are called \emph{compatible} if there are no universal intersections between them.
\begin{proposition}[{\cite{fst}}]
	Given any collection of pairwise compatible arcs, it is always possible to
	find representatives in their isotopy classes whose relative interiors do not intersect each other.
\end{proposition}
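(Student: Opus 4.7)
The plan is to prove the statement by an inductive bigon-removal argument, combined with an innermost-disc choice that keeps the isotopy from disturbing the other arcs of the family. Let $\{\gamma_1,\ldots,\gamma_n\}$ be pairwise compatible arcs. First I would choose smooth representatives (say, all in a common auxiliary metric) that are mutually transverse in the interior of $S$ and meet only at common endpoints in $M$; this is a standard general-position argument.

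Next I would use the \emph{bigon criterion}: two properly embedded arcs with fixed endpoints on a surface with boundary are isotopic (rel endpoints) to disjoint representatives if and only if they are compatible in the isotopy-class sense; moreover, any two transverse such arcs that are compatible but intersect in their interiors bound an embedded \emph{bigon}, i.e.\ a disc $B\subset S$ whose boundary is the union of a subarc of $\gamma_i$ and a subarc of $\gamma_j$ meeting in exactly two transverse interior intersection points. Induct on the total number $N=\sum_{i<j}|\gamma_i\cap\gamma_j|$. If $N=0$, we are done; otherwise choose some pair $(i,j)$ with $|\gamma_i\cap\gamma_j|>0$ and let $B$ be a bigon formed by them.

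The main obstacle, and the reason a simple pairwise argument is not enough, is that isotoping $\gamma_i$ across $B$ can create new intersections with some $\gamma_k$, $k\ne i,j$, if $\gamma_k$ passes through the interior of $B$. To avoid this I would choose $B$ to be \emph{innermost in the whole family}: among all bigons formed by all pairs in $\{\gamma_1,\ldots,\gamma_n\}$, pick one whose interior meets no $\gamma_k$. Such a bigon exists, because if the interior of some bigon $B$ is crossed by a third arc $\gamma_k$, then each component of $\gamma_k\cap B$ is a properly embedded arc in the disc $B$ with endpoints on $\partial B$; an outermost such component cuts off a strictly smaller sub-disc whose boundary is formed by subarcs of two of the three curves and which (by compatibility of the relevant pair) is itself a bigon. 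Iterating this yields the desired innermost bigon.

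Finally, an ambient isotopy supported in a small neighbourhood of the chosen innermost $B$ that pushes the $\gamma_i$-side across to the $\gamma_j$-side eliminates the two intersection points on $\partial B$, leaves all other arcs $\gamma_k$ pointwise fixed (since they do not meet $B$), and strictly decreases $N$. This preserves compatibility (an isotopy of representatives), so the inductive hypothesis applies and produces the required collection of representatives with mutually disjoint relative interiors. The only delicate step is the innermost-bigon claim; everything else is routine surface topology, so I would spend the bulk of the write-up on the nested-disc argument above.
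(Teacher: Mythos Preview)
The paper does not prove this proposition; it is quoted from \cite{fst} as a background fact with no argument supplied. Your bigon-removal strategy is the standard combinatorial route to this result, and the overall plan---induct on the total intersection number, remove an innermost bigon by an isotopy supported near it---is sound. However, the innermost-bigon step as you have written it contains a real gap.

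You claim that if a bigon $B$ between $\gamma_i$ and $\gamma_j$ is crossed by a third arc $\gamma_k$, then an outermost component of $\gamma_k\cap B$ ``cuts off a strictly smaller sub-disc whose boundary is formed by subarcs of two of the three curves''. This is false when that component runs from the $\gamma_i$-side of $\partial B$ to the $\gamma_j$-side: the sub-disc it cuts off is then a \emph{triangle} with sides on all three of $\gamma_i$, $\gamma_j$, $\gamma_k$, not a bigon, and your iteration does not produce a two-sided region. A single transverse crossing of $\gamma_k$ through $B$ already exhibits this, and compatibility of $\gamma_k$ with $\gamma_i$ only guarantees a bigon \emph{somewhere} on the surface, not inside $B$.

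The cleanest repair is to build the disjoint family sequentially rather than all at once. Having realised $\gamma_1,\ldots,\gamma_m$ with pairwise disjoint interiors, place $\gamma_{m+1}$ in general position and remove its intersections with the earlier arcs by innermost bigons. The point is that the already-placed arcs are mutually disjoint, so in any bigon $B$ between $\gamma_{m+1}$ and some $\gamma_i$ ($i\le m$), any other $\gamma_j$ ($j\le m$) entering $B$ cannot touch the $\gamma_i$-side and therefore has both endpoints on the $\gamma_{m+1}$-side. Now every crossing component genuinely cuts off a smaller bigon, your outermost argument goes through verbatim, and the induction closes. Alternatively one can bypass the combinatorics entirely by choosing a hyperbolic metric and taking geodesic representatives, which automatically realise minimal intersection.
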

\begin{definition}
	Let $(S,M)$ be a marked surface. A \emph{$d$-angulation} $D$ of $(S,M)$ is a set of compatible arcs which divides $(S,M)$ into a collection of (pseudo) $d$-gons. 
\end{definition}

We use $(S,M,D)$ to denote a marked surface $(S,M)$ with a $d$-angulation $D$. For such a $(S,M,D)$, each arc $i$ is contained in exactly two $d$-gons $D_1$ and $D_2$. Moreover, we call $i$ \emph{self-folded} if $D_1=D_2$. 
\begin{example}
	Let $(S,M)$ be an annulus with three marked points on each of its boundary components. Then $(S,M)$ admits a 3-angulation $D'$ as shown by the left picture in Figure~\ref{fig:3/4-angulation}, as well as a 4-angulation $D$ as shown by the right picture, on it. In the 4-angulation $D$, $1$ is a self-folded arc.
	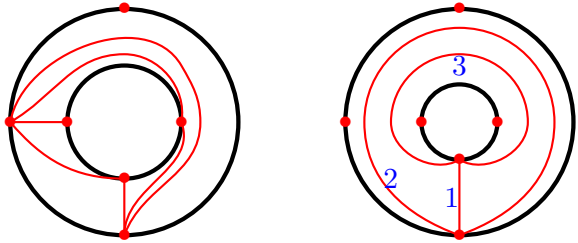
\begin{figure}[htpb]
		\centering
		\begin{tikzpicture}[scale=0.5]
			\draw[ultra thick](0,0) circle (3);
			\draw[ultra thick](0,0) circle (1.5);
			\draw[red,thick](1.5,0)to[out=-70,in=90](0,-3);
			\draw[red,thick](-3,0)to[out=0,in=180](-1.5,0);
			\draw[red,thick](-3,0)to[out=30,in=180](0,1.8)to[out=0,in=80](1.5,0);
			\draw[red,thick](-3,0)to[out=-50,in=180](0,-1.5); 
			\draw[red,thick](0,-1.5)to(0,-3.0); 
			\draw[red,thick](-3,0)to[out=80,in=120](1.5,1.5)to[out=-60,in=90](2,0)to[out=-90,in=70](0,-3);
			\draw(-1.5,0)\rn (1.5,0)\rn (0,-1.5)\rn (-3,0)\rn (0,3)\rn (0,-3)[red]\rn ;
		\end{tikzpicture}\quad \quad \quad
		\begin{tikzpicture}[scale=0.5]
			\draw[ultra thick](0,0) circle (3);
			\draw[ultra thick](0,0) circle (1);
			\draw[red,thick](0,-1)to[out=-90,in=90](0,-3);
			\draw[red,thick](0,-3)to[out=20,in=270](2.5,0)to[out=90,in=0](0,2.5)to[out=180,in=90](-2.5,0)to[out=-90,in=160](0,-3);
			\draw[red,thick](0,-1)to[out=-30,in=270](1.8,0)to[out=90,in=0](0,1.8)to[out=180,in=90](-1.8,0)to[out=-90,in=210](0,-1);
			\draw(-1,0)\rn (1,0)\rn (0,-1)\rn (-3,0)\rn (0,3)\rn (0,-3)[red]\rn ;
			\draw[blue] (-.2,-2)node{$1$} (-1.8,-1.5)node{$2$} (0,1.5)node{$3$};
		\end{tikzpicture}
		\caption{Example of 3-angulation and 4-angulation on the same marked surface $(S,M)$}
		\label{fig:3/4-angulation}
	\end{figure}
\end{example}
Denote by $B$ the set of boundary segments of $(S,M)$.
\begin{definition}[{Flip}]
	Let $D$ be a $d$-angulation on $(S,M)$. For each arc $i\in D$, the \emph{flip} of $i$ is the arc $\mu_D(i)$ obtained from $i$ by moving its two endpoints along $B\cup(D\setminus\{i\})$ anticlockwise to the next marked points (see Figure~\ref{flip} and Figure~\ref{flip self-folded}).
	Denote by $\mu_i(D)=D\setminus\{i\}\cup\{\mu_D(i)\}$ the new $d$-angulation after flipping $i$.
\end{definition}
\begin{remark}
	For a self-folded arc $i$ (see Figure~\ref{flip self-folded}), we know that $i^+,i^-$ are in the $d$-gon $D_1$. Denote by $k_1$ (resp.  $k_2$) the number of edges
	sandwiched between $1^+$ and $1^-$ (resp.  $1^-$ and $1^+$) in clockwise order in $D_1$. We know that
	$k_1,k_2\geq 1$ and $k_1+k_2+2=d\geq4 $ since there are no punctures on $(S,M)$.
\end{remark}
\begin{definition}
	A directed graph 
	$G$ is called a \emph{directed complete graph} if there are exactly two opposite directed edges between any two distinct vertices, and there are no loops in $G$.
	%A directed graph $G$ is a \emph{directed complete graph} if
	%there are  exactly two opposite directed edges between any two different vertices and there are no loops  on $G$.
	
	An undirected graph 
	$G$ is called a \emph{complete graph} (or \emph{regular graph}) if there is exactly one undirected edge between any two distinct vertices, and there are no loops in $G$.	
	%A undirected graph $G$ is a \emph{complete graph (regular graph)} if
	%there is exactly an (undirected) edge between any two different vertices and there are no loops on $G$.
\end{definition}
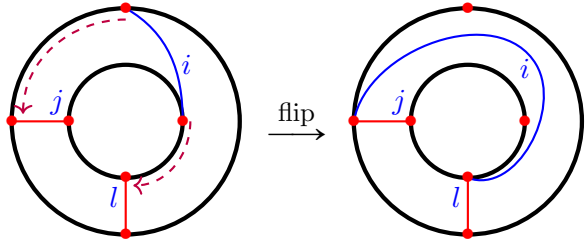
\begin{figure}[htpb]
	\centering
	\begin{tikzpicture}[scale=0.5]
		\draw[ultra thick](0,0) circle (3);
		\draw[ultra thick](0,0) circle (1.5);
		\draw[blue,thick](0,3)to[out=-30,in=90](1.5,0);
		\draw[purple,thick,dashed,->] (0,2.7)to[out=180,in=90](-2.7,0.2);
		\draw[purple,thick,dashed,->] (1.7,0)to[out=-90,in=0](0.2,-1.7);
		\draw[blue] (-1.8,.5)node{$j$};
		\draw[blue] (1.6,1.5)node{$i$};
		\draw[blue] (-0.3,-2.0)node{$l$};
		\draw[red,thick](-3,0)to[out=0,in=180](-1.5,0); 
		\draw[red,thick](0,-1.5)to(0,-3.0); 
		\draw(-1.5,0)\rn (1.5,0)\rn (0,-1.5)\rn (-3,0)\rn (0,3)\rn (0,-3)[red]\rn ;
		\draw[] (4.5,0) node {\Large{$\stackrel{\text{flip}}{\longrightarrow}$}};
		\begin{scope}[shift={(9,0)}]
			\draw[ultra thick](0,0) circle (3);
			\draw[ultra thick](0,0) circle (1.5);
			\draw[blue,thick](-3,0)to[out=80,in=120](1.8,1.5)to[out=-60,in=-20](0,-1.5);
			\draw[blue] (1.5,1.4)node{$i$};
			\draw[blue] (-1.8,.5)node{$j$};
			\draw[blue] (-0.3,-2.0)node{$l$};
			\draw[red,thick](-3,0)to[out=0,in=180](-1.5,0); 
			\draw[red,thick](0,-1.5)to(0,-3.0); 
			\draw(-1.5,0)\rn (1.5,0)\rn (0,-1.5)\rn (-3,0)\rn (0,3)\rn (0,-3)[red]\rn ;
		\end{scope}	
	\end{tikzpicture}			
	\caption{Flip of an arc $i$}\label{flip}
\end{figure} 
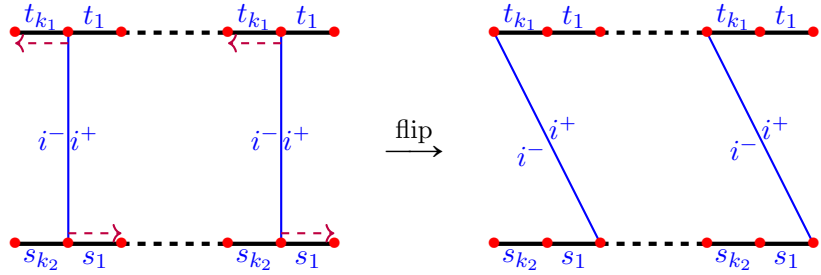
\begin{figure}[htpb]
	\centering
	\begin{tikzpicture}[scale=0.7]
		\draw[ultra thick] (-3,2)to(-1,2);
		\draw[ultra thick] (-3,-2)to(-1,-2);
		\draw[ultra thick] (1,-2)to(3,-2);
		\draw[ultra thick] (1,2)to(3,2);
		\draw[ultra thick,dashed] (-1,2)to(1,2);
		\draw[ultra thick,dashed] (-1,-2)to(1,-2);
		\draw[blue,thick](-2,2)to(-2,-2);
		\draw[blue,thick](2,2)to(2,-2);
		\draw(-3,2)\rn (3,2)\rn(-2,2)\rn (2,2)\rn (-1,2)\rn (1,2)\rn (-3,-2)\rn (3,-2)\rn(-2,-2)\rn (2,-2)\rn (-1,-2)\rn (1,-2)\rn;
		\draw[blue] (-1.7,0)node{$i^+$} (1.7,0)node{$i^-$}  (2.3,0)node{$i^+$} (-2.3,0)node{$i^-$}   (-2.5,2.3)node{$t_{k_1}$} (-1.5,2.3)node{$t_1$}  (1.5,2.3)node{$t_{k_1}$}(2.5,2.3)node{$t_1$}
		(-2.5,-2.3)node{$s_{k_2}$} (-1.5,-2.3)node{$s_1$}  (1.5,-2.3)node{$s_{k_2}$}(2.5,-2.3)node{$s_1$};
		\draw[purple,thick,dashed,->] (-2,1.8)to(-3,1.8)  ; 
		\draw[purple,thick,dashed,->] (2,1.8)to(1,1.8);
		\draw[purple,thick,dashed,->] (-2,-1.8)to(-1,-1.8)  ; 
		\draw[purple,thick,dashed,->] (2,-1.8)to(3,-1.8);
		\draw[] (4.5,0) node {\Large{$\stackrel{\text{flip}}{\longrightarrow}$}};
		\begin{scope}[shift={(9,0)}]
			\draw[ultra thick] (-3,2)to(-1,2);
			\draw[ultra thick] (-3,-2)to(-1,-2);
			\draw[ultra thick] (1,-2)to(3,-2);
			\draw[ultra thick] (1,2)to(3,2);
			\draw[ultra thick,dashed] (-1,2)to(1,2);
			\draw[ultra thick,dashed] (-1,-2)to(1,-2);
			\draw[blue,thick](-3,2)to(-1,-2);
			\draw[blue,thick](1,2)to(3,-2);
			\draw(-3,2)\rn (3,2)\rn(-2,2)\rn (2,2)\rn (-1,2)\rn (1,2)\rn (-3,-2)\rn (3,-2)\rn(-2,-2)\rn (2,-2)\rn (-1,-2)\rn (1,-2)\rn;
			\draw[blue] (-1.7,.2)node{$i^+$} (1.7,-.2)node{$i^-$} (2.3,.2)node{$i^+$} (-2.3,-.3)node{$i^-$}   (-2.5,2.3)node{$t_{k_1}$} (-1.5,2.3)node{$t_1$}  (1.5,2.3)node{$t_{k_1}$}(2.5,2.3)node{$t_1$}
			(-2.5,-2.3)node{$s_{k_2}$} (-1.5,-2.3)node{$s_1$}  (1.5,-2.3)node{$s_{k_2}$}(2.5,-2.3)node{$s_1$};
		\end{scope}	
	\end{tikzpicture}	
	\caption{Flip of a self-folded arc $i$}\label{flip self-folded}
\end{figure}
\begin{definition}\label{def:QW}
	Suppose $D$ divides $S$ into $d$-gons $\{D_1,\cdots, D_s\}$. For each $D_i, 1\leq i\leq s$, let $Q(D_i)$ be the graded directed complete graph constructed as follows.
	\begin{itemize}
		\item Vertices of $Q(D_i)$ are indexed by the non-boundary edges of $D_i$.		
		\item Each arrow $\phi$ in $Q(D_i)$ has degree $|\phi|=-n$ with $n$ the number of other edges sandwiched
		between $s(\phi)$ and $t(\phi)$ in clockwise order.
	\end{itemize}
    A (suitable) superpotential $W(D_i)$ associated to $Q(D_i)$ is defined, up to sign, as the sum of all 3-cycles in $Q(D_i)$ such that $\{W(D_i),W(D_i)\}=0$. Note that $W(D_i)$ is homogeneous of degree $3-d$.
	
    A QsP $(Q_D,W_D)$ associated to $D$ is obtained from  the \emph{pre-quiver} $ _{p}Q=\{Q(D_1),\cdots,Q(D_s)\}$ by gluing vertices indexed by the same arc, and $W_D=\sum^s_{i=1}W(D_i)$. 
    
    For simplicity, we also denote $(Q_D,W_D)$ by $(Q,W)$ when there is no confusion arising.
\end{definition}
\begin{remark}
	Note that in the Definition~\ref{def:QW}, all (suitable) choices of superpotential  are considered (while the quiver is fixed).
	The quiver with superpotential associated to a $d$-angulation has already been studied, see \cite[Section~4.3]{merlin} and \cite[Section~8]{IQ} for example.
\end{remark}
\begin{remark}
	Since each arc $i$ is exactly associated to two vertices in $_{p}Q$, we denote by $i^+,i^-$ the  two vertices. Denote by $a_{sm}$ the arrow in $_{p}Q$ from $s$ to $m$, where $s,m\in (_pQ)_0$.
\end{remark}
\begin{example}
	 Consider the 4-angulation $(S,M,D)$ in Figure~\ref{fig:3/4-angulation}, then we have pre-quiver $_{p}Q$  associated to $(S,M,D)$  as follows:
		\begin{align*}
			\xymatrix@C=3pc{1^+\ar@<0.3ex>[rr]^{} \ar@<0.3ex>[rrdd]^{} \ar@<0.3ex>[dd]^{}&&  \ar@<0.3ex>[ll]^{} \ar@<0.3ex>[dd]^{} \ar@<0.3ex>[lldd]^{} 2^+  & 2^- & 3^- 
				\\				
				\\
				3^+\ar@<0.3ex>[rr]^{} \ar@<0.3ex>[uu]^{} \ar@<0.3ex>[rruu]^{} & & \ar@<0.3ex>[ll]^{} \ar@<0.3ex>[lluu]^{} \ar@<0.3ex>[uu]^{} 1^-  & &
			}
		\end{align*}
		The quiver $Q$ associated to $(S,M,D)$ is:		
		\begin{align*}
			\xymatrix@C=4pc{ & & \ar@(l,u)
				1 \ar@(r,u) \ar@<0.3ex>[lldd]^{}  \ar@<1ex>[lldd]^{}  \ar@(u,r) \ar@<0.3ex>[rrdd]^{}  \ar@<1ex>[rrdd]^{}&& 
				\\
				\\
				\ar@<0.3ex>@/_2pc/[rrrr]	2 \ar@<0.3ex>[rruu]^{}  \ar@<1ex>[rruu]^{} &&&&   \ar@<0.4ex>@/^2pc/[llll] 3 \ , \ar@<0.3ex>[lluu]^{}  \ar@<1ex>[lluu]^{} 
				\\
				&&&&
			}
		\end{align*}
		where 	$|a_{1^+2^+}|=|a_{2^+1^-}|=|a_{1^-3^+}|=|a_{3^+1^+}|=0$,
		$|a_{2^+1^+}|=|a_{1^-2^+}|=|a_{3^+1^-}|=|a_{1^+3^+}|=-2$ and $|a_{1^-1^+}|=|a_{1^+1^-}|=|a_{2^+3^+}|=|a_{3^+2^+}|=-1$.		
\end{example}
\begin{remark}\label{rmk:sign}
	There is no canonical choice for the sign of 3-cycles in $W$. Indeed, when $d=3$, we have $|a_{ij}a_{jl}a_{li}|=0$. So there is no sign-difference up to cyclic equivalence. Hence different choices of the signs result in the same Ginzburg dg algebra. However when $d=4$, an arbitrary choice of the sign may not give a well-defined Ginzburg dg algebra, see Example~\ref{exm:sign}.
\end{remark}
\begin{example}\label{exm:sign}
	Let $\surf$ be a disk with twelve marked points on the boundary. Let $D$ be the 4-angulation as shown by the left picture of Figure~\ref{fig:sign}. Then the associated $Q_D$ is given by the right picture.
	
	Denote by $\triangle_{ijl}=a_{ij}a_{jl}a_{li}$. Then $W=\triangle_{132}+\triangle_{142}+\triangle_{234}-\triangle_{134}$ is a well-defined superpotential. However, $W=\triangle_{132}+\triangle_{142}+\triangle_{234}+\triangle_{134}$ is not a superpotential because $d^2(a_{12})\neq 0$.
\begin{figure}[htpb]\centering
	\begin{tikzpicture}[scale=0.5]
		\draw[ultra thick](0,0) circle (3);
		\draw[red,thick](0,3)to (3,0);
		\draw[red,thick](3,0)to (0,-3);
		\draw[red,thick](0,-3)to (-3,0);
		\draw[red,thick](-3,0)to (0,3);
		\draw[blue] (-1.3,1.3)node{$1$};
		\draw[blue] (1.3,1.3)node{$3$};
		\draw[blue] (1.3,-1.3)node{$4$};
		\draw[blue] (-1.3,-1.3)node{$2$};
		\draw(2.6,1.5)\rn (2.6,-1.5)\rn  (-2.6,1.5)\rn  (-2.6,-1.5)\rn (3,0)\rn (0,-3)\rn (-3,0)\rn (0,3)\rn (0,-3)[red]\rn (0,-3)\rn (1.5,2.6)\rn  (-1.5,2.6)\rn(1.5,-2.6)\rn(-1.5,-2.6)\rn;		
		\draw(8,0)node{$\xymatrix@R=1.5em@C=2em{& & &3 \ar@<0.2ex>[ldd]^{-1} \ar@<0.2ex>^{-2}@/_1pc/[llldd] \ar@<0.2ex>[dddd]^{0}\\
				&&&\\  1 \ar@<0.2ex>[rr]^{-2}  \ar@<0.2ex>^{-1}@/_1pc/[rrrdd] \ar@<0.2ex>^{0}@/^1pc/[rrruu] & & \ar@<0.2ex>[ll]^{0} \ar@<0.2ex>[rdd]^{-2} 2 \ar@<0.2ex>[ruu]^{-1} & \\
				&&& \\ 
				& & &      \ar@<0.2ex>^{-1}@/^1pc/[llluu] 4 \ar@<0.2ex>[luu]^{0}  \ar@<0.2ex>[uuuu]^{-2}  }$  };
	\end{tikzpicture}
	\caption{}
	\label{fig:sign}
\end{figure}
\end{example}

Note that every arc $i$ belongs to  two $d$-gons $D_1,D_2$ (and $D_1=D_2$ when $i$ is a self-fold arc).  Denote by 
\begin{align}
	A=\{\alpha \in Q_1 |\alpha:j \rga i ,  |\alpha|=0 \}.
\end{align}
By Definition~\ref{def:QW}, $A$ consists of all degree-0 arrows end at $i$ and we have $|A|\in\{0, 1 ,2\}$.

Let $(Q_D,W_D)$ be a well-defined QsP of a $d$-angulation $D$. Although the superpotential associated to $Q_{D}$ is not unique (due to the choice of signs, see Remark~\ref{rmk:sign}), we can always construct a well-defined superpotential $W_D$ of $Q_{D}$ (see {\cite[Section4.3]{merlin}}).

\begin{definition}\label{mutation}
	Let $(Q, W)$ be as in Definition~\ref{def:QW}. Then the  mutation $\mu^{'}_{i}(Q,W)=(\mu^{'}_{i}(Q),\mu^{'}_{i}(W))$ is defined by
	the following constructions.
	
	%First, we construct a new graded quiver $\mu^{'}_{i}(Q)$:
	\begin{enumerate}[Step~1]
		\item  Let $\widetilde{_p(Q)}$ be the quiver obtained from  $ _{p}Q$ by gluing vertices $i^+,i^-$ into $i$. Let $\mu_{i}(\widetilde{_p(Q)})$ be the quiver constructed as in Definition~\ref{def:mut QsP}.
		
		\item Remove all \emph{superfluous} arrow pairs defined as follows.
		\begin{enumerate}[(1)]
			\item $|A|=0$, there are no superfluous arrow pairs;
			\item $|A|=1$, there are some new arrow pairs given in Step~1 (formal composition): $\alpha\varphi , \varphi \alpha^{-1},\alpha \varphi \alpha^{-1},\ \alpha \in A$.
			Assume $A=\{a_{ki^+} \},\ k \in (_pQ)_0$. Then double-arrow-sets 
			\[(a_{ki^+}a_{i^+j},a_{kj}),\quad (a_{ki^+}a_{i^+i^-}a_{ki^+}^{-1},a_{ki^-}a_{ki^+}^{-1})\] 
			(and their opposites) are called superfluous if (the two arrows in) it exists in 
			$\mu_{i}(\widetilde{_p(Q)})$.	 	      	
			\item $|A|=2$, there are some new arrow pairs given in step 1 (formal composition): $\alpha\varphi , \varphi \alpha^{-1},\alpha \varphi \beta^{-1},\ \alpha,\beta \in A$.
			Assume $A=\{a_{ki^+} ,a_{k'i^-}\},\ k,k' \in (_pQ)_0$. Then double-arrows-sets 
			\[(a_{ki^+}a_{i^+j},a_{kj}),\quad (a_{ki^+}a_{i^+i^-}a_{ki^+}^{-1},a_{ki^-}a_{ki^+}^{-1}),\quad (a_{ki^+}a_{i^+i^-}a_{k'i^-}^{-1},a_{ki^-}a_{k'i^-}^{-1})\] 
			\[(a_{k'i^-}a_{i^-j},a_{k'j}),\quad (a_{k'i^-}a_{i^-i^+}a_{k'i^-}^{-1}, a_{k'i^+}a_{k'i^-}^{-1}),\quad (a_{k'i^-}a_{i^-i^+}a_{ki^+}^{-1},a_{k'i^+}a_{ki^+}^{-1})\]
			(and their opposites) are superfluous if (the two arrows in) it exists in $\mu_{i}(\widetilde{_p(Q)})$.	
		\end{enumerate}
		
		\item $\mu^{'}_{i}(Q)$ is obtained from $\mu_{i}(\widetilde{_p(Q)})$ by gluing all vertex-pairs $\{k^{\pm},k\neq i\}$ into $k$.
	
	    \item The superpotential $\mu^{'}_{i}(W)$ is obtained from $\mu_i(W)$ (see Definition~\ref{def:mut QsP}) by removing all paths that contain the arrows being removed in Step~2.
\end{enumerate}
\end{definition}
\begin{remark}
	The removing of superfluous arrow pairs and the reducing of superpotential  are necessary (to make sure that there is no 2-cycle in superpotential). When $d=3$, the reducing of $\mu_{i}(W)$ is consistent with right-equivalence in \cite{DWZ}, see the first case in Example~\ref{ex2.17}.
\end{remark}

The main result in this section is the following theorem, the proof will be given in Section~5.
\begin{theorem}\label{thm2.16}
	For a given  quiver with superpotential $(Q,W)$ associated to a $d$-angulation $(S,M,D)$, $\mu^{'}_{i}(Q,W)$ is a  quiver with superpotential associated to $(S,M,\mu_i(D))$.
\end{theorem}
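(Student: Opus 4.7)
The plan is to verify the statement by a local analysis around the flipped arc $i$. Since both the geometric flip $\mu_i$ and the algebraic mutation $\mu'_i$ modify only the $d$-gon(s) containing $i$, it suffices to compare, around $i$, the local structure of $(Q_{\mu_i(D)}, W_{\mu_i(D)})$ with the output of $\mu'_i$ applied to $(Q_D, W_D)$. I will split into cases according to $|A|\in\{0,1,2\}$, where $A$ is the set of degree-$0$ arrows ending at $i$ in the pre-quiver $_pQ$; the case $|A|=2$ further splits according to whether $i$ is self-folded, the self-folded subcase being the delicate one.

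The first step is to show that $\mu'_i(Q_D)$ and $Q_{\mu_i(D)}$ agree as graded quivers. Outside the two $d$-gons at $i$ both quivers coincide trivially. Locally, the flip replaces $D_1, D_2$ by two new $d$-gons $D'_1, D'_2$ whose cyclic edge sequences are obtained by sliding the two endpoints of $i$ anticlockwise along $B\cup(D\setminus\{i\})$, thereby regrouping consecutive runs of edges from $D_1$ and $D_2$. I then match this regrouping against Oppermann's recipe: (a) the degree shifts in Step~(1) of Definition~\ref{def:mut QsP} exactly reflect the new cyclic positions of the edges of $D_1\cup D_2\setminus\{i\}$ with respect to $\mu_D(i)$; (b) the replacement of $\alpha,\alpha^{op}$ by $\alpha^*,(\alpha^*)^{op_M}$ in Step~(2) matches the fact that the arrow associated to $\mu_D(i)$ in the new pre-quiver points the opposite way; (c) the formal compositions $\alpha\varphi$, $\varphi\alpha^{-1}$, $\alpha\varphi\beta^{-1}$ in Steps~(3)–(4) produce exactly those arrows of $Q(D'_k)$ connecting an edge previously in $D_1$ to one previously in $D_2$. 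Finally, the superfluous pairs removed in Step~2 of Definition~\ref{mutation} are precisely the algebraic duplicates that arise when a pair of edges, glued by the formal composition, already belongs to an unchanged $d$-gon adjacent to $D_1$ or $D_2$; removing one arrow from each such pair restores the principle that $Q_{\mu_i(D)}$ is the gluing of the directed complete graphs $Q(D'_k)$.

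The second step is to verify $\mu'_i(W_D)=W_{\mu_i(D)}$ up to cyclic equivalence and the sign ambiguity noted in Remark~\ref{rmk:sign}. Using the decomposition $W_M=\dec_{cyc}W+\sum_{\alpha,\varphi}\alpha\dec(\varphi\varphi^{op_M})\alpha^*$ from Definition~\ref{def:mut QsP}, I will show that the decorated-reduction piece accounts for 3-cycles of $\mu_i(D)$ lying in unchanged $d$-gons and for 3-cycles of $D'_k$ obtained by rearrangement from 3-cycles of $D_k$, while the new-cycle piece accounts for 3-cycles of $D'_k$ passing through $\mu_D(i)$ (equivalently, containing the reversed arrow $\alpha^*$). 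The cycles of $\mu_i(W_D)$ that end with an arrow in $A$ or pass through a superfluous arrow are killed by the $\red$ operation and by Step~4 of Definition~\ref{mutation}, and they correspond geometrically to spurious triangles crossing the flipped arc. The ambiguity of signs is fixed by enforcing $\{W,W\}=0$, which by Remark~\ref{rmk:sign} determines $W_{\mu_i(D)}$ up to the allowed equivalence.

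The hardest case is the self-folded one, where $D_1=D_2$ and the parameters $k_1,k_2\ge 1$ with $k_1+k_2+2=d$ from the remark following the definition of flip govern the local combinatorics (Figure~\ref{flip self-folded}). In this case $i^+$ and $i^-$ share a $d$-gon, many formal-composition arrows collapse, several of the superfluous pairs listed in Step~2(iii) of Definition~\ref{mutation} occur simultaneously, and the degree-shift in Step~(1) of Definition~\ref{def:mut QsP} applies to arrows with both source and target at $i$, so the sign $(-1)^d$ in $\dec_{cyc}$ becomes non-trivial. The bulk of the work in Section~5 is to carry out this bookkeeping, to confirm that the resulting $\mu'_i(W)$ is a valid superpotential via $d_M^2=0$ (Proposition~\ref{prop:diff of mut-Ginz}), and to match its 3-cycles to those of the new $d$-gons $D'_1,D'_2$.
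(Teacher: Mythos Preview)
Your approach is essentially the same as the paper's: a local analysis at the flipped arc, a case split on $|A|\in\{0,1,2\}$, an explicit matching of the graded quiver $\mu'_i(Q_D)$ with $Q_{\mu_i(D)}$ by tracking Oppermann's steps against the regrouping of edges, and then a check that $\mu'_i(W)$ supplies exactly the $3$-cycles of the new $d$-gons while the $2$-cycles are killed by Step~4; the condition $\{\mu'_i(W),\mu'_i(W)\}=0$ is deferred to the proof of Theorem~\ref{mainthm}, just as you propose.

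Two small corrections to your outline. First, the self-folded subcase does not only arise when $|A|=2$: it must also be treated separately when $|A|=1$ (see Example~\ref{ex2.17}(2) and the paper's Case~(2) in Section~5.1), since the edges adjacent to $i^\pm$ may be boundary segments. Second, your geometric explanation of the superfluous pairs is off: they are not duplicates coming from an \emph{adjacent} $d$-gon, but rather the formal composite $a_{ki^+}a_{i^+j}$ is parallel to the \emph{already existing} arrow $a_{kj}$ inside the \emph{same} $d$-gon $D_1$, and together they generate a $2$-cycle in $\mu_i(W)$; removing both (and their opposites) is the higher analogue of the DWZ $2$-cycle reduction. With these adjustments your plan matches the paper's proof.
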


\begin{figure}[htpb]
	\centering
	\begin{tikzpicture}[scale=0.6]
		\draw[ultra thick](0,0) circle (3);
		\draw[red,thick](0,3)to (2.6,-1.5);
		\draw[red,thick](2.6,-1.5)to (-2.6,-1.5);
		\draw[red,thick](-2.6,-1.5)to (0,3);
		\draw[blue] (1.3,1.3)node{$3$};
		\draw[blue] (-1.3,1.3)node{$2$};
		\draw[blue] (0,-1.1)node{$1^{+}$};
		\draw[blue] (0,-1.8)node{$1^{-}$};
		\draw(2.6,1.5)\rn (2.6,-1.5)\rn  (-2.6,1.5)\rn  (-2.6,-1.5)\rn  (0,-3)\rn  (0,3)\rn;
		\draw[] (4.5,0) node {\Large{$\stackrel{\text{flip}}{\longrightarrow}$}};
		\begin{scope}[shift={(9,0)}]
			\draw[ultra thick](0,0) circle (3);
			\draw[red,thick](0,3)to (2.6,-1.5);
			\draw[red,thick](0,3)to (0,-3);
			\draw[red,thick](-2.6,-1.5)to (0,3);
			\draw[blue] (1.3,1.3)node{$3$};
			\draw[blue] (-1.3,1.3)node{$2$};
			\draw[blue] (-0.5,0)node{$1^{+}$};
			\draw[blue] (0.5,0)node{$1^{-}$};
			\draw(2.6,1.5)\rn (2.6,-1.5)\rn  (-2.6,1.5)\rn  (-2.6,-1.5)\rn  (0,-3)\rn  (0,3)\rn;
		\end{scope}	
	\end{tikzpicture}
	\caption{}
	\label{d=3,A3}
\end{figure}
\begin{example}\label{ex2.17}
	\begin{enumerate}[(1)]
		\item Consider $(S,M,D)$ and $(S,M,\mu_1(D))$ in Figure~\ref{d=3,A3} ({\cite[Example 14]{labardini}}).
		$(Q,W)$ associated to $(S,M,D)$ is as follows:
		\begin{align*}
			\xymatrix@C=6pc{&  2 \ar@<0.3ex>^{a_{21^+}}[ld] \ar@<0.3ex>^{a_{23}}[rd] &  \\
				1 \ar@<0.3ex>^{a_{1^{+}2}}[ru] \ar@<0.3ex>^{a_{1^{+}3}}[rr]&   & 3 \ar@<0.3ex>^{a_{32}}[lu] \ar@<0.3ex>^{a_{31^{+}}}[ll]}
		\end{align*}
		\[
		W=a_{31^+}a_{1^{+}2}a_{23}
		\]
		Next consider $\mu_{1}(Q,W)$, we know that $A=\{ a_{31^+}\}$ and the only new arrow pair is%as follows by Definition~\ref{def:mut QsP}:
		\[
		\{
		a_{31^+}a_{1^+2},a_{21^+}a_{31^+}^{-1}
		\}.
		\]
		For convenience, we denote by $\{b_{32},b_{23}\}$ the new arrow pair. Then the superpotential is 
		\begin{align*}
			\mu_{1}(W)&=a_{31^+}a_{1^{+}2}a_{23}+a_{31^+}a_{1^+2}a_{21^+}a_{31^+}^{*}\\
			&=b_{32}a_{23}+b_{32}a_{21^+}a_{31^+}^{*}.
		\end{align*}
		
		The superfluous arrow pairs are:
		\[
		\{a_{23},a_{32},b_{32},b_{23}\}.
		\]
		Then $\mu_{1}^{'}(Q)$ as follows:
		\begin{align*}
			\xymatrix@C=5pc{&  2 \ar@<0.3ex>^{a_{21^+}}[ld]  &  \\
				1 \ar@<0.3ex>^{a_{1^{+}2}}[ru] \ar@<0.3ex>^{a_{31^{+}}^{*}}[rr]&   & 3. \ar@<0.3ex>^{a_{1^{+}3}^{*}}[ll]}
		\end{align*}
		So, we have
		\begin{align*}
			\mu_{1}^{'}(W)&=0.
		\end{align*}
		Notice the arrow pair $a_{31^+},a_{1^+3}$ is replaced by the arrow pair $a_{31^+}^*,a_{1^+3}^*=(a_{31^+}^*)^{op}$  in the opposite direction. We view $a_{31^+}^*,a_{1^+3}^*=(a_{31^+}^*)^{op}$
		as the arrow pair between $3$ and $1^-$.
		\item  	
		\begin{figure}[htpb]
			\centering
			\begin{tikzpicture}[scale=0.9]
				\draw[ultra thick](0,0) circle (3);
				\draw[ultra thick](0,0) circle (1);
				\draw[red,thick](0,-1)to[out=-90,in=90](0,-3);
				\draw[red,thick](0,-3)to[out=20,in=270](2.7,0)to[out=90,in=0](0,2.5)to[out=180,in=90](-2.5,0)to[out=-90,in=160](0,-3);
				\draw[red,thick](0,-1)to[out=-30,in=270](1.6,0)to[out=90,in=0](0,1.6)to[out=180,in=110](-1,0);
				\draw(-1,0)\rn (1,0)\rn  (0,1)\rn (0,-1)\rn (0.72,0.72)\rn  (3,0)\rn  (-3,0)\rn (0,3)\rn (0,-3)[red]\rn ;
				\draw[blue] (-.2,-2)node{$1^-$}(.4,-2)node{$1^+$} (-1.8,-1.5)node{$2$} (1.4,0.3)node{$3$};
				\draw[] (4.5,0) node {\Large{$\stackrel{\text{flip}}{\longrightarrow}$}};
				\begin{scope}[shift={(9,0)}]
					\draw[ultra thick](0,0) circle (3);
					\draw[ultra thick](0,0) circle (1);
					\draw[red,thick](-1,0)to[out=-90,in=180](0,-2)to[out=0,in=-90](2.1,0)to[out=90,in=0](0,2.1)to[out=180,in=160](0,-3);
					\draw[red,thick](0,-3)to[out=20,in=270](2.7,0)to[out=90,in=0](0,2.5)to[out=180,in=90](-2.5,0)to[out=-90,in=160](0,-3);
					\draw[red,thick](0,-1)to[out=-30,in=270](1.6,0)to[out=90,in=0](0,1.6)to[out=180,in=110](-1,0);
					\draw(-1,0)\rn (1,0)\rn  (0,1)\rn (0,-1)\rn (0.72,0.72)\rn  (3,0)\rn  (-3,0)\rn (0,3)\rn (0,-3)[red]\rn ;
					\draw[blue] (0,-2.2)node{$1^-$}(0,-1.7)node{$1^+$}(-1.8,-1.5)node{$2$} (1.4,0.3)node{$3$};
				\end{scope}	
			\end{tikzpicture}
			\caption{}
			\label{d=4,slfe-folded,mutation}
		\end{figure}
		 Let $(S,M,D)$ be a $d$-angulation as shown by the left picture of Figure~\ref{d=4,slfe-folded,mutation}. Let $(S,M,\mu_1(D))$ the flip of $D$ at arc $1$, see the right picture of Figure~\ref{d=4,slfe-folded,mutation}. Then $(Q,W)$ associated to $(S,M,D)$ is as follows (after identifying $1^+$ and $1^-$):
		\begin{align*}
			\xymatrix@C=6pc{& 3\ar@<0.3ex>[ld] \ar@<0.3ex>[dd]  \ar@<0.3ex>[rd]&\\
				1^{+} \ar@<0.3ex>[rr] \ar@<0.3ex>[ru] \ar@<0.3ex>[rd]&  &1^{-} \ar@<0.3ex>[ll]  \ar@<0.3ex>[lu] \ar@<0.3ex>[ld]  \\
				&2\ar@<0.3ex>[lu] \ar@<0.3ex>[uu]  \ar@<0.3ex>[ru] & }
		\end{align*}
		with
		\[
		W=a_{31^-}a_{1^-1^+}a_{1^+3}+a_{31^-}a_{1^-2}a_{23}
		+a_{21^+}a_{1^+1^-}a_{1^-2}+a_{21^+}a_{1^+3}a_{32} .
		\]
		Next consider $\mu_{1}(Q,W)$, we know that $A=\{ a_{21^+}\}$ and the new arrow pairs are %as follows by Definition~\ref{def:mut QsP}:
		\[
		\{a_{21^+}a_{1^+3},a_{31^+}a_{21^+}^{-1} , a_{21^+}a_{1^+1^{-}},a_{1^-1^+}a_{21^+}^{-1} \},
		\]
		\[
		\{ \textcolor{blue}{a_{21^+}a_{1^-3},a_{31^-}a_{21^+}^{-1}} , a_{21^+}a_{1^-2},a_{21^-}a_{21^+}^{-1} , \textcolor{blue}{a_{21^+}a_{1^-1^{+}},a_{1^+1^-}a_{21^+}^{-1}} \},
		\]
		\[
		\{
		a_{21^+}a_{1^+1^-}a_{21^+}^{-1},a_{21^+}a_{1^-1^+}a_{21^+}^{-1}
		\}.
		\]
		The superfluous arrow pairs are:
		\[
		\{a_{21^+}a_{1^+3},a_{31^+}a_{21^+}^{-1} ,a_{23},a_{32}, a_{21^+}a_{1^+1^{-}},a_{1^-1^+}a_{21^+}^{-1},a_{21^-},a_{1^-2} \},
		\]
		\[
		\{
		a_{21^+}a_{1^+1^-}a_{21^+}^{-1},a_{21^+}a_{1^-1^+}a_{21^+}^{-1},a_{21^+}a_{1^-2},a_{21^-}a_{21^+}^{-1} 
		\}.
		\]
		For convenience, we denote         \[c_{23}=a_{21^+}a_{1^-3},c_{32}=a_{31^-}a_{21^+}^{-1},c_{21^+}=a_{21^+}a_{1^-1^{+}},c_{1^+2}=a_{1^+1^-}a_{21^+}^{-1}.\]
		Then we have $\mu_{1}^{'}(Q)$ as follows:
		\begin{align*}
			\xymatrix@C=6pc{& 3\ar@<0.3ex>[ld] \ar@<0.3ex>[dd]  \ar@<0.3ex>[rd]&\\
				1^{+} \ar@<0.3ex>[rr] \ar@<0.3ex>[ru] \ar@<0.3ex>[rd]&  &1^{-} \ar@<0.3ex>[ll]  \ar@<0.3ex>[lu] \ar@<0.3ex>[ld]  \\
				&2\ar@<0.3ex>[lu] \ar@<0.3ex>[uu]  \ar@<0.3ex>[ru] & }
		\end{align*}
		where we can view $a_{21^+}^*,a_{1^+2}^*=(a_{21^+}^*)^{op}$
		as the arrow pair between $2$ and $1^-$, $\{c_{23},c_{32}\}$ the arrow pair between $2$ and $3$, $\{c_{21^{+}}c_{1^{+}2}\}$ the arrow pair between $2$ and $1^+$.
		
		Finally, we have 
		\begin{align*}
			\mu_{1}(W)=\dec_{ cyc} W + \sum_{\alpha, \varphi} \alpha \dec( \varphi \varphi^{op_M} ) \alpha^*,
		\end{align*}
		\begin{align*}
			\dec_{ cyc} W =&  a_{31^-}(1-a_{21^+}^{-1}a_{21^+})a_{1^-1^+}(1-a_{21^+}^{-1}a_{21^+})a_{1^+3}\\
			&+a_{31^-}(1-a_{21^+}^{-1}a_{21^+})a_{1^-2}a_{23}\\
			&+a_{21^+}a_{1^+1^-}(1-a_{21^+}^{-1}a_{21^+})a_{1^-2}\\
			&+a_{21^+}a_{1^+3}a_{32}.
		\end{align*}
		For the first part in $\mu_{1}(W)$, we know that	 
		\[
		a_{31^-}(1-a_{21^+}^{-1}a_{21^+})a_{1^-1^+}a_{1^+3}=a_{31^-}a_{1^-1^+}a_{1^+3}-c_{32}c_{21^+}a_{1^+3}
		\] 
		is the only path that containing no superfluous arrows.
		\begin{align*}
			\sum_{\alpha, \varphi} \alpha \dec( \varphi \varphi^{op_M} ) \alpha^* =&  a_{21^+}a_{1^+1^-}(1-a_{21^+}^{-1}a_{21^+})a_{1^-1^+}a_{21^+}^{*} \\
			&+a_{21^+}a_{1^-1^+}(1-a_{21^+}^{-1}a_{21^+})a_{1^+1^-}a_{21^+}^{*} \\
			&+a_{21^+}a_{1^+3}a_{31^+}a_{21^+}^{*}
			+a_{21^+}a_{1^-3}a_{31^-}a_{21^+}^{*}\\
			&+a_{21^+}a_{1^-2}a_{21^-}a_{21^+}^{*} .
		\end{align*}
		
		As for the second part in $\mu_{1}(W)$, we know that	 
		\[
		a_{21^+}a_{1^-1^+}a_{1^+1^-}a_{21^+}^{*}+a_{21^+}a_{1^-3}a_{31^-}a_{21^+}^{*}=c_{21^+}a_{1^+1^-}a_{21^+}^{*}+c_{23}a_{31^{-}}a_{21^+}^{*}
		\] 
		is the only path that containing no superfluous arrows.	 
		Thus, we know that 
		\begin{align*}
			\mu_{1}^{'}(W)=a_{31^-}a_{1^-1^+}a_{1^+3}-c_{32}c_{21^+}a_{1^+3} +c_{21^+}a_{1^+1^-}a_{21^+}^{*}+c_{23}a_{31^{-}}a_{21^+}^{*}. 
		\end{align*}
	\end{enumerate}
\end{example}
At the end of this section, we give a proposition regarding the number of arcs in a given $d$-angulation (\cite{fst,labardini,jacquet-malo_construction_2024}).
\begin{proposition}
	Let $(S,M)$ be a marked surface with d-angulation $D$. Denote by $g,b$ and $c$ the number of genus, boundary components and marked points respectively. Then the number $m$ of $d$-gons   and the number $n$ of arcs  in $(S,M,D)$ are:
	$$m=\frac{4g+2b+c-4}{d-2}, n=\frac{2dg+db+c-2d}{d-2}.$$
\end{proposition}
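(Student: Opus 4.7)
The plan is to use Euler's formula together with a double-counting of edge-face incidences. A $d$-angulation of $(S,M)$ gives a CW decomposition of $S$: the $0$-cells are the marked points, the $1$-cells are the arcs together with the boundary segments, and the $2$-cells are the $d$-gons. So I would first identify the three counts: $V = c$ (marked points), $E = n + c$ (arcs plus boundary segments, noting that each boundary component with $k$ marked points carries exactly $k$ boundary segments, hence the total is $c$), and $F = m$.

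Next, I would invoke Euler's formula for a compact connected oriented surface of genus $g$ with $b$ boundary components, namely $\chi(S) = 2 - 2g - b$, to obtain
\[
m - n = c - (n+c) + m = \chi(S) = 2 - 2g - b,
\]
which gives the first linear relation $n = m + 2g + b - 2$.

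Then I would produce a second linear relation by counting edge-face incidences. Each $d$-gon has $d$ sides, so the total number of incidences equals $d\cdot m$. On the other hand, each boundary segment lies in exactly one $d$-gon while each arc lies in exactly two $d$-gons (or appears twice as a side of a single $d$-gon if it is self-folded, which still contributes $2$ to the incidence count). This yields
\[
d\, m = 2n + c.
\]
Solving this pair of equations — substituting $n = m + 2g + b - 2$ into $dm = 2n + c$ — gives $(d-2)m = 4g + 2b + c - 4$, and then back-substitution produces the stated formulas for $m$ and $n$.

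The only subtlety I anticipate is handling the self-folded case carefully in the incidence count, since a self-folded arc bounds a single $d$-gon on both of its sides; but viewing the CW structure combinatorially (each $d$-gon is an attached $2$-cell whose boundary traverses $d$ edge-slots) makes this automatic, because a self-folded arc is simply traversed twice by the attaching map of its containing $d$-gon. Beyond that, the argument is a standard Euler characteristic computation, and no additional input from the rest of the paper is needed.
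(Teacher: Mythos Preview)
Your argument is correct. Both your proof and the paper's use the edge--face incidence relation $dm = 2n + c$, but you obtain the second linear relation differently. The paper reduces to the known triangulation count: it quotes that a triangulation of $(S,M)$ has $6g+3b+c-6$ arcs and hence $4g+2b+c-4$ triangles, then observes that each $d$-gon decomposes into $d-2$ triangles to get $m$ directly, and finally applies the incidence relation to extract $n$. You instead go straight to Euler's formula for the CW structure, which yields $n = m + 2g + b - 2$ without invoking the triangulation case as a black box. Your route is more self-contained and arguably cleaner, since it does not rely on an externally cited formula; the paper's route is shorter on the page if one is willing to import the triangulation count from \cite{fst}. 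Your handling of the self-folded case via the attaching map is exactly the right way to see that the incidence count is unaffected.
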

\begin{proof}
	The number of arcs in a triangulation of $(S,M)$ is given by $n_1=6g+3b+c-6$, and the number of triangles is $\frac{2n_1+c}{3}=4g+2b+c-4$. 
	We can view a $d$-gon as consisting of $d-2$ triangles, so the number of $d$-gons is $m=\frac{4g+2b+c-4}{d-2}$.
	Since each arc contribute 2 and each boundary segment contribute 1, we obtain the following expression for $n$:
	\begin{equation*}
		n=\frac{\frac{d}{d-2} (4g+2b+c-4)-c}{2}=\frac{2dg+db+c-2d}{d-2}.
	\end{equation*}	
\end{proof}

\section{$\Gamma_{1}$ is quasi-isomorphic to $\Gamma_ {2}$}

The following lemma is  useful when dealing with dg quiver algebra. Similar treatments can be found in  \cite{oppermann,keller-yang}.
\begin{lemma}[{\cite{keller-yang}}]\label{lemma3.1}
	Let $A=\k Q$ be a dg quiver algebra such that $d(\alpha)\in    \mathfrak{m},\forall \alpha \in Q_1$, where $\mathfrak{m}$ is the arrow ideal of $A$. Assume that $a,b\in Q_1$ with $d(a)=k_1b+p$, where $k_1\in \k,\ k_1\neq 0$, and $p\in \mathfrak{m}$, and there is no path containing arrow $b$ in $p$. Then the dg  quotient homomorphism
	\begin{equation}
		A\longrightarrow A/(a,d(a))
	\end{equation}
	is a quasi-isomorphism.
	Now, view $A/(a,d(a))$ as $A'=\k Q'$, where $Q'_1=Q_1\verb|\| \{a,b\}$, and the differential $d_{A'}$ of $A'$ is inherited from $d_A$. 
	By replacing the arrow $b$ appearing in the differential $d_{A}$ with $-\frac{1}{k_1}p$ and setting $a=0$, we obtain the following quasi-isomorphism:
	\begin{equation}
		A\longrightarrow A'.
	\end{equation}
\end{lemma}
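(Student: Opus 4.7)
The plan is to proceed in three steps, the heart of the argument being a Koszul-type contraction on the dg ideal $I := (a, d(a))$.

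\emph{Step 1 (change of variables).} Since $k_1 \neq 0$ and $p$ contains no path passing through the arrow $b$, we may regard $b' := d(a) = k_1 b + p$ as a new generator replacing $b$: the substitution $b \mapsto \tfrac{1}{k_1}(b' - p)$ is well-defined (no recursion, because $p$ has no $b$) and gives an isomorphic presentation of $A$ on generators $\{a, b'\} \cup (Q_1 \setminus\{a,b\})$. In these new variables one has $d(a)=b'$ and, by $d^2=0$, also $d(b')=0$. The ideal $I$ becomes $(a,b')$.

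\emph{Step 2 (acyclicity of $I$).} The main obstacle is to show $I$ is acyclic; once this is done, the quotient map $A \to A/I$ is automatically a quasi-isomorphism. I would define a $\k$-linear contracting homotopy $s: A \to A$ of degree $-1$ as follows. Each path $w$ of $A$ can be written uniquely as $w = u\gamma v$, where $\gamma$ is the leftmost letter of $w$ belonging to $\{a,b'\}$ and $u$ is a path avoiding $\{a,b'\}$ (if no such $\gamma$ exists, $w \notin I$ and we set $s(w)=0$). Set
\[
s(u\, b'\, v) \;=\; (-1)^{|u|}\, u\, a\, v, \qquad s(u\, a\, v) \;=\; 0.
\]
Using $d(a)=b'$, $d(b')=0$, and the graded Leibniz rule, a direct bookkeeping shows that $(ds + sd)(w) = w$ for $w\in I$ and $(ds+sd)(w)=0$ otherwise. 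Hence $I$ is acyclic and $A \twoheadrightarrow A/I$ is a quasi-isomorphism.

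\emph{Step 3 (identification with $A'$).} It remains to identify $A/I$ with $A'$ as dg algebras. As a graded algebra, $A/I$ is generated by $Q_1\setminus\{a,b\}$: the relation $a=0$ removes $a$, while $b' = k_1b + p = 0$ forces $b = -\tfrac{1}{k_1}p$, a substitution that terminates because $p$ has no $b$. The induced differential on $A/I$ is obtained from $d_A$ by the same substitution, which is exactly the description of $d_{A'}$. Composing $A \twoheadrightarrow A/I$ with the resulting isomorphism $A/I \xrightarrow{\sim} A'$ yields the second quasi-isomorphism of the lemma.

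The main obstacle is Step 2: the definition of $s$ relies on the ``leftmost'' letter of a path, and under the Leibniz rule this leftmost position can be shifted into the interior of $d(u)$ for some other arrow. One must therefore verify that the terms introduced by $d(s(w))$ and $s(d(w))$ cancel in pairs outside the distinguished one giving back $w$; this is the standard Koszul cancellation, but the sign check is the only nontrivial computation in the proof.
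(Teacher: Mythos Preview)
Your Steps~1 and~3 are fine, but Step~2 contains a genuine gap: the ``leftmost letter'' homotopy $s$ does \emph{not} satisfy $(ds+sd)|_I=\mathrm{id}_I$ in general, and the failure is not a matter of signs. After the change of variables, the differential of an arrow $c\in Q_1\setminus\{a,b\}$ may still produce paths containing $a$ or $b'$ (every occurrence of $b$ in the original $d(c)$ becomes one of $b'$ after substitution), and then the purported cancellation breaks down. For a concrete failure, take a single-vertex quiver with loops $a,b,c$, degrees $|a|=|c|=-1$, $|b|=0$, and $d(a)=d(c)=b$, $d(b)=0$ (so $b'=b$ and your Step~1 is vacuous). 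For $w=cb$ one computes $s(w)=-ca$, $ds(w)=-ba+cb$, $dw=b^2$, $s(dw)=ab$, hence $(ds+sd)(w)=cb+ab-ba\neq cb$. The ``standard Koszul cancellation'' you invoke is valid for a tensor algebra $T_R(V)$ whose differential is \emph{internal to $V$}; here $d$ sends arrows to longer paths, and that argument does not go through.

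The paper avoids this obstacle with two devices you are missing. First, it performs further linear changes of generators (replacing every arrow $x$ with $d(x)=tb+\cdots$ by $x-ta$) so that $b$ no longer occurs in the linear part of $d(\alpha)$ for any $\alpha\neq a$; a short $d^2=0$ argument then also rules $a$ out of $d(\alpha)$. Second, instead of a global homotopy it uses the path-length filtration $S_i=\mathfrak m^i\cap S$ on $S=(a,d(a))$: on each subquotient $S_i/S_{i+1}\cong (U\oplus V)^{\otimes_R i}/U^{\otimes_R i}$ the differential \emph{is} internal to the arrow bimodule, the summand $V=\k a\oplus\k b$ is contractible, and acyclicity of $S$ follows by induction together with a Mittag--Leffler argument. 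Your plan can be repaired by inserting the first reduction before Step~2 (after which your homotopy does work), or by replacing Step~2 with the filtration argument; as written, however, the key step is incorrect.
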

\begin{proof}
	Firstly, consider $p\in \mathfrak{m}^2$  such that $\forall \alpha\in Q_1\verb|\|\{a,b\} ,d(\alpha)=p_1+p_2$, where $p_1\in \mathfrak{m} \verb|\|\mathfrak{m}^2$ and $p_2\in \mathfrak{m}^2$, and $a,b$ do not appear in $p_1$. Let $S=(a,d(a))$ and $S_i=\mathfrak{m}^i\cap S$. Then, we have $S_1=S$.
	
	It is clear that the complex  $S_i/S_{i+1}$ is isomorphic to
	\begin{equation}
		(U\oplus V)^{\otimes_{R}i}/U^{\otimes_{R}i},
	\end{equation}
	where $R=\oplus_{t\in Q_0}\k t$ and $U=\oplus_{t\in Q_1'}\k t$. The differential $d_U$ is defined by $d_U(\alpha)=p_1$ when $d_A(\alpha)=p_1+p_2$, with $p_1\in \mathfrak{m} \verb|\|\mathfrak{m}^2$ and $p_2\in \mathfrak{m}^2$.  By assumption, $U$ is complex. The complex $V=\k a\oplus \k b$ has the differential $d(a)=k_1 b$, so $V$ is contractible. 
	Thus $S_i/S_{i+1}$ is contractible. By the exact sequence
	\begin{equation}
		0\rga 	S_i/S_{i+1}\rga S_1/S_{i+1} \rga S_1/S_{i} \rga 0
	\end{equation}
	we know that $S/S_{i+1}$ is contractible. Therefore, $S$ is also contractible by Mittag–
	Leffler Lemma, which implies that $A\rga A/S$ is a quasi-isomorphism.
	
	Let $k_1=1$; otherwise replace  $b$ by a new arrow $k_1b$. Since there is no path containing $b$ in $p$, we can view $A$ as a dg quiver algebra generated by  $Q_1\verb|\| \{b \}\cup \{b+p \}$, where $|b+p|=|b|=|a|+1$. Therefore, we can focus on the case where $d(a)=b$.
	Next, collect all arrows $\{x_i\}_{i=1}^k$ such that $d(x_i)=t_ib+m_i$, where $t_i\neq 0 $ and there is no path containing $b$ in $m_i$.  Without loss of generality, set $t_i=1$. Define $y_i=x_i-a$ and view $A$ as the dg quiver algebra generated by $Q_1\verb|\|\{x_i\}_{i=1}^k\cup \{y_i \}_{i=1}^k$. We have now obtained dg quotient algebra $A'$ from the situation discussed above, where we have the following relations: 
	$\overline{x_i}=\overline{x_i-a}=\overline{y_i}$, 
	$d(\overline{x_i})=\overline{m_i}=d(\overline{y_i})$.

	Thus, $A/(a,d(a))$ is quasi-isomorphic to $A'$ the
	dg quotient algebra after the arrows are replaced. We only need to consider  the case where
	$d(a)=b$ such that  there is no path containing $b$ in $d(\alpha), \forall \alpha \in Q_1\verb|\|\{a\}$. 
	If there is an arrow $\alpha\in Q_1\verb|\|\{a,b\}$ such that the path $a$ is contained in $d(\alpha)$, then we have $d^2(\alpha)\neq 0$, because the path $b$ can only appear in $d(a)$. Finally, we arrive at the situation discussed above.
\end{proof}
\begin{corollary}\label{cor3.2}
	Let $A=\k Q$ be a non-positive dg quiver algebra, where $\mathfrak{m}$ is the arrow ideal of $A$ such that $\forall \alpha \in Q_1 ,d(\alpha)\in \mathfrak{m}$,  and there are no cycles of degree $0$ in $\k Q$. If $a,b\in Q_1$, and $d(a)=b+p$ with $p\in \mathfrak{m}^2$, then we have the following quasi-isomorphism
	\begin{equation}
		A\longrightarrow A/(a,d(a)).
	\end{equation}
\end{corollary}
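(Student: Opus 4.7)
The plan is to show that the extra hypothesis required by Lemma~\ref{lemma3.1} --- namely that no path in $p$ contains the arrow $b$ --- is in fact automatic here, under the combined assumptions of non-positivity and the absence of degree-$0$ cycles. Once that reduction is made, Lemma~\ref{lemma3.1} applies verbatim with $k_1 = 1$ and delivers the desired quasi-isomorphism $A \longrightarrow A/(a, d(a))$.

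To carry out the reduction, I first do the degree bookkeeping. Since $d$ raises degree by one and $d(a) = b + p$, I have $|b| = |a| + 1$; non-positivity gives $|a|,|b| \leq 0$, so $|a| \leq -1$. In particular every arrow of $Q$ has non-positive degree, and any path of degree $0$ in $\k Q$ must be a composition of degree-$0$ arrows. Now suppose for contradiction that some term $\alpha_1 \cdots \alpha_k$ of $p$ contains $b$, say $\alpha_j = b$. Since $p \in \mathfrak{m}^2$, I have $k \geq 2$. Summing degrees gives $\sum_{i \neq j}|\alpha_i| = 0$, and non-positivity forces $|\alpha_i| = 0$ for every $i \neq j$. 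Moreover, since each term of $p$ shares its source and target with $b$, the prefix $\alpha_1 \cdots \alpha_{j-1}$ is a closed walk of degree $0$ at $s(b)$, and the suffix $\alpha_{j+1} \cdots \alpha_k$ is a closed walk of degree $0$ at $t(b)$. The no-degree-$0$-cycle hypothesis then forces both the prefix and the suffix to have length zero, i.e.\ $j = 1 = k$. But then the path is $b$ itself, of length $1$, contradicting $k \geq 2$.

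Consequently no term of $p$ contains the arrow $b$, so $(a, b, p)$ satisfies the hypotheses of Lemma~\ref{lemma3.1}, and the conclusion follows. The only non-routine step is the middle paragraph --- the small degree/cycle analysis which combines non-positivity with the cycle hypothesis to rule out $b$ sitting inside a path of length $\geq 2$ of the correct degree. Apart from that I expect no genuine obstacle; the corollary is essentially the observation that the awkward auxiliary condition in Lemma~\ref{lemma3.1} is free of charge in the non-positive, cycle-free setting.
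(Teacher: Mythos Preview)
Your proposal is correct and matches the paper's intended approach: the paper states Corollary~\ref{cor3.2} without proof, as an immediate consequence of Lemma~\ref{lemma3.1}, and your degree/cycle argument is precisely the verification that the extra hypothesis of Lemma~\ref{lemma3.1} (no path in $p$ contains $b$) is automatic under non-positivity together with the absence of degree-$0$ cycles. There is nothing to add.
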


We give some examples to show that the conditions in Corollary~\ref{cor3.2} is necessary.
\begin{example}
	\begin{enumerate}
		\item
		Let $A=\k Q$ be a dg quiver algebra with relation $bc=0$. The quiver $Q$ as follows, where $|\alpha|=-1,|a|=|b|=|c|=0$ and $d(\alpha)=ab,d(a)=d(b)=d(c)=0$:
		\begin{align*}
			\xymatrix@C=4pc{1 \ar@<0.4ex>^{\alpha}@/_2pc/[rr]  \ar[r]^{a} & 2\ar[r]^{b} & 3\ar[r]^{c}&4    }.
		\end{align*}
		
		 The map $A\rga A/(\alpha,d(\alpha))$ is not a quasi-isomorphism, because $H^{-1}A\neq 0$ (because $d(\alpha c)=0$), and the cohomology of the latter only concentrate on degree $0$.
		\item 
		Let $A=\k Q$ be a dg quiver algebra with $Q$ the quiver as follows, where $|\alpha|=-1,|a|=|b|=0$ and  $d(\alpha)=aba,d(a)=d(b)=0$:
		\begin{align*}
			\xymatrix@C=6pc{1 \ar@<0.4ex>_{\alpha}@/_2pc/[r]  \ar@<0.3ex>[r]^{a} & 2\ar@<0.3ex>[l]^{b}    }.     
		\end{align*}
		Let $\overline{A}=A/(\alpha ,d(\alpha))$. We can view $\overline{A}$ as the following dg quiver algebra with relation $aba=0$,
		\begin{align*}
			\xymatrix@C=6pc{1   \ar@<0.3ex>[r]^{a} & 2\ar@<0.3ex>[l]^{b}    }.          
		\end{align*}
		 It is known that $H^{-1}\overline{A}=0$. However, we have  $d(\alpha ba-ab\alpha)=0$ in 
		 $A$, and for any path $p$ with $|p|=-2$ , the length of any path in $d(p)$ is greater than 3. This means that $\alpha ba-ab\alpha \notin \im d$. So $H^{-1}A\neq 0$ and hence the map $A\rga \overline{A}$ is not a quasi-isomorphism.
	\end{enumerate}
\end{example}

If the approximation in (\ref{1.9}) is not minimal, the new silting objects are not basic. In this case, the induced endomorphism  dg algebras become more complicated. In particular, the mutations of two dg quiver algebras which are quasi-isomorphic may no longer be quasi-isomorphic. See the example below.
\begin{example}
	Let $A=\k Q$ be a dg quiver algebra with $Q$ the quiver as follows, where $|\alpha|=-1,|a|=|b|=|c|=0$ and $d(\alpha)=bc-a$.  By Corollary~\ref{cor3.2}, it is quasi-isomorphic to $A'=\k Q'$, where $Q'=Q\verb|\|\{\alpha,a\}$.
	\begin{align*}
		\xymatrix@C=4pc{1 \ar@<0.4ex>^{b}@/_3pc/[rr] \ar@<-0.3ex>[r]_{\alpha} \ar@<0.3ex>[r]^{a} & 2\ & \ar[l]^{c}3   }. 
	\end{align*}
	A direct calculation shows that $\dim H^0(\mathscr{E}nd (\mu_1 (A)))=8$ and $\dim H^0(\mathscr{E}nd(\mu_1 (A')))=5$.
\end{example}
Let $(Q,W)$ be a QsP given by a $d$-angulation $(S,M,D)$. Let $\Gamma$ be the Ginzburg algebra associated to $(Q,W)$.  Denote by $\Gamma_1$ and $\Gamma_2$ the Ginzburg algebra associated to $\mu_{i}(Q,W)$ and $\mu^{'}_{i}(Q,W)$. Denote by $d,d_1,d_2$ the differential of  $\Gamma$, $\Gamma_1$, $\Gamma_2$, respectively.

The main result in this section is the following theorem which says the two Ginzburg algebras are quasi-isomorphic. The proof will be given in Section~5.
\begin{theorem}\label{mainthm}
	There is a quasi-isomorphism $\Gamma_1 \rga \Gamma_2$.
\end{theorem}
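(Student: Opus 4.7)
The plan is to build the quasi-isomorphism $\Gamma_1 \to \Gamma_2$ by iteratively applying Corollary~\ref{cor3.2} to eliminate the superfluous arrow pairs identified in Definition~\ref{mutation}, one pair at a time. For each superfluous pair $(x,y)$, I will show that, using the formulas of Proposition~\ref{prop:diff of mut-Ginz} together with the explicit shape of $W_D = \sum_j W(D_j)$ (which is a sum of $3$-cycles in each $d$-gon), one of the two arrows in the pair, say $x$, satisfies $d_1(x) = \pm y + p$, where $p$ lies in $\mathfrak m^2$ and does not contain the arrow $y$. Applying Corollary~\ref{cor3.2} then produces a quasi-isomorphism collapsing $x$ and $y$, while the substitution prescribed in Lemma~\ref{lemma3.1} simultaneously replaces $y$ by $\mp p$ inside every differential and inside $\mu_i(W)$.

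The argument splits according to $|A|\in\{0,1,2\}$. When $|A|=0$ there are no formal composites and $\Gamma_1=\Gamma_2$ tautologically. When $|A|=1$, say $A=\{a_{ki^+}\}$, the pairs to be eliminated have the two shapes $(a_{ki^+}a_{i^+j},\,a_{kj})$ and $(a_{ki^+}a_{i^+i^-}a_{ki^+}^{-1},\,a_{ki^-}a_{ki^+}^{-1})$. For the first shape, the relevant derivative $\partial_{(a_{ki^+}a_{i^+j})^{op_M}}W_M = a_{ki^+}\dec\red(\partial_{a_{ji^+}}W)$ from Proposition~\ref{prop:diff of mut-Ginz} reads off from the $3$-cycle through $a_{ji^+}$ in the $d$-gon on the $D_2$ side, and has $a_{kj}$ as its leading term, so Corollary~\ref{cor3.2} applies directly. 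For the second shape one uses instead the formula for $d_M(\varphi\alpha^{-1})$ with $\varphi = a_{ki^+}a_{i^+i^-}$; the leading term is $a_{ki^-}a_{ki^+}^{-1}$, coming from $(-1)^{\|\varphi\|}\varphi\alpha^*$ after reducing with the substitutions already performed on the first family. The case $|A|=2$ is analogous but carries six pairs, and I will fix an order (first the two pairs of the form $(a_{\ast i^\pm}a_{i^\pm j},a_{\ast j})$, then the four ``loop-type'' pairs) so that once we erase an arrow the remaining pairs still satisfy the hypothesis of Corollary~\ref{cor3.2}, i.e.\ the to-be-erased arrow still appears linearly and uniquely in the differential of its partner.

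Once every superfluous pair has been removed, I will check that the quiver remaining in $\Gamma_1$ coincides with $\mu'_i(Q)$ and that the induced superpotential coincides with $\mu'_i(W)$. This last point is the formal content of Step~4 of Definition~\ref{mutation}: the Lemma~\ref{lemma3.1}-substitution procedure applied to $\mu_i(W)$ precisely deletes every monomial that passes through a removed arrow, leaving exactly $\mu'_i(W)$. Combined with the construction of $\Gamma_2$ from $\mu'_i(Q,W)$ via Definition~\ref{def:Ginz}, this identifies the resulting dg algebra with $\Gamma_2$ and completes the quasi-isomorphism.

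The main obstacle I anticipate is the bookkeeping of signs and of the decoration $\Delta = 1-\sum_{\alpha\in A}\alpha^{-1}\alpha$ occurring in $\dec$, since $\dec$ distributes $\Delta$ at every internal vertex of a path. When $|A|=2$, the second summand of $\Delta$ at $i$ contributes nontrivially to the cyclic derivatives appearing in Proposition~\ref{prop:diff of mut-Ginz}, and one must verify that, after each Corollary~\ref{cor3.2} substitution, the ``cross terms'' involving the other element of $A$ reduce either to the expected leading term of another superfluous pair or to a path already removed from $\mu_i(W)$. A careful but routine case analysis on the shape of the $3$-cycles of $W_D$ adjacent to $i$ (distinguishing cycles internal to a single $d$-gon from cycles through a self-folded arc) resolves this, and produces the desired quasi-isomorphism $\Gamma_1\to\Gamma_2$.
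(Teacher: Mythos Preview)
Your overall strategy matches the paper's: iteratively apply Lemma~\ref{lemma3.1}/Corollary~\ref{cor3.2} to cancel the superfluous arrow pairs of Definition~\ref{mutation}, then identify what remains with $\Gamma_2$. However, two execution steps are not right as stated.

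First, the direction of your key cancellation is reversed. You claim $d_1(a_{ki^+}a_{i^+j})=a_{ki^+}\dec\red(\partial_{a_{ji^+}}W)$ has the single arrow $a_{kj}$ as leading term, but $\partial_{a_{ji^+}}W$ is a sum of length-two paths $a_{i^+m}a_{mj}$, so the result is a sum of \emph{length-three} paths; no arrow $a_{kj}$ appears. The paper instead computes the differential of the \emph{old} arrow: $d_1(a_{kj})=\dec\red(\partial_{a_{jk}}W)$, and the $3$-cycle $a_{jk}a_{ki^+}a_{i^+j}$ in $W$ contributes exactly the composite $a_{ki^+}a_{i^+j}=b_{kj}$ as a summand. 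So the pair is canceled via $d_1(a_{kj})=b_{kj}+\cdots$. For the opposite arrows $b_{jk}=a_{ji^+}a_{ki^+}^{-1}$ the direction \emph{is} as you describe (one computes $d_1(b_{jk})$ and finds $a_{jk}$ inside the $\dec((d\varphi)/\alpha)$ term), but the two halves of each pair behave differently and you must treat them separately, in a specific order, as the paper does.

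Second, your endgame is too optimistic. Lemma~\ref{lemma3.1} does not ``delete every monomial that passes through a removed arrow''; canceling $(a,b)$ with $d(a)=b+p$ sets $a\mapsto 0$ and \emph{substitutes} $b\mapsto -p$ in all remaining differentials. For half of the superfluous pairs $p\neq 0$ (e.g.\ canceling $(b_{j2},a_{j2})$ yields the relation $a_{j2}=\pm a_{j1^+}a_{21^+}^*$), and these relations a priori propagate into the differentials of the surviving arrows. The paper then checks, by direct calculation of $d_1$ on each surviving arrow (the $b_{2s'}$, $b_{s'2}$, $a_{12}^*$, etc.), that the substituted expressions coincide with the differentials coming from the Ginzburg algebra of $\mu'_i(Q,W)$. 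This verification is the actual content of the proof; it does not follow formally from Step~4 of Definition~\ref{mutation}. You should also be aware that the self-folded case introduces genuine extra arrow pairs (those built from the loop $\varphi=a_{i^+i^-}$, such as $\alpha\varphi$, $\alpha\varphi^{op}$, $\alpha\varphi\alpha^{-1}$) beyond the two shapes you list, and these require their own chain of cancellations with carefully chosen order.
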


By Theorem~\ref{mainthm} and Theorem~\ref{thm2.16}, we have the following main result in our paper.
\begin{theorem}\label{commutative diagram}
		We have the following
	commutative diagram up to quasi-isomorphism
	$$\xymatrix@C=8pc{(S,M,D) \ar[r]\ar[d]^{\text{flip}}&(Q,W)\ar[d]^{\text{mutation}}\\
		(S,M,\mu_{i}(D)) \ar[r]&\mu_{i}(Q,W).}$$
\end{theorem}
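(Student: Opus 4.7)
The plan is to obtain this theorem as an essentially formal consequence of the two substantial results already stated, namely Theorem~\ref{thm2.16} and Theorem~\ref{mainthm}. First I would fix the interpretation of the diagram: the horizontal arrows send a marked surface with a $d$-angulation to (the Ginzburg algebra of) the associated QsP via Definition~\ref{def:QW} and Definition~\ref{def:Ginz}, the right-hand vertical arrow is Oppermann's mutation $\mu_i$ at the vertex corresponding to the arc $i$ (Definition~\ref{def:mut QsP}), and the left-hand vertical arrow is the geometric flip. Commutativity ``up to quasi-isomorphism'' then means that the Ginzburg algebra of any QsP associated to $(S,M,\mu_i(D))$ is quasi-isomorphic to the Ginzburg algebra of $\mu_i(Q,W)$.

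The strategy is to interpose the modified mutation $\mu'_i(Q,W)$ introduced in Definition~\ref{mutation} as an intermediate object. First I would invoke Theorem~\ref{thm2.16}, which identifies $\mu'_i(Q,W)$ with a QsP associated to the flipped $d$-angulation $(S,M,\mu_i(D))$. Consequently, if $\Gamma_2$ denotes the Ginzburg algebra of $\mu'_i(Q,W)$, then $\Gamma_2$ is precisely a Ginzburg algebra obtained by traversing the diagram along the bottom and right-hand route through the flipped surface. Next I would apply Theorem~\ref{mainthm}, which supplies a quasi-isomorphism $\Gamma_1 \to \Gamma_2$, where $\Gamma_1$ is the Ginzburg algebra of Oppermann's mutation $\mu_i(Q,W)$. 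Chaining these two facts produces a quasi-isomorphism between the Ginzburg algebra reached by the top-then-right path and the one reached by the left-then-bottom path, which is exactly the commutativity asserted.

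To finalise the argument I would be careful about the choice of signs in the superpotential: by Remark~\ref{rmk:sign} the superpotential is not canonical, but the construction in Definition~\ref{def:QW} together with the existence statement cited from \cite{merlin} guarantees that a well-defined $W_{\mu_i(D)}$ may be chosen on the flipped side; Theorem~\ref{thm2.16} ensures that $\mu'_i(Q,W)$ is one such valid choice, so the quasi-isomorphism class of the Ginzburg algebra on the bottom-right corner does not depend on auxiliary data. Since Theorems~\ref{thm2.16} and \ref{mainthm} are both established in their respective sections (Sections~2 and 3, with proofs in Section~5), the only work left here is the verification that their conclusions compose correctly around the square; this is essentially diagram-chasing and requires no new calculation.

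I do not expect any genuine obstacle at this stage, because the substantive combinatorial content lives in Theorem~\ref{thm2.16} and the substantive homological content lives in Theorem~\ref{mainthm}. If there is a subtle point it is the bookkeeping of conventions on the bottom-right corner of the diagram, namely making explicit that the arrow labelled $\mu_i(Q,W)$ stands for Oppermann's mutated QsP while the Ginzburg algebra attached to the flipped surface is a priori $\Gamma_2 = \td(\mu'_i(Q,W))$; clarifying this identification, and noting that quasi-isomorphism of Ginzburg dg algebras is the appropriate notion of sameness on the bottom-right vertex (so that the modified mutation and Oppermann's mutation are interchangeable in the diagram), is all that remains.
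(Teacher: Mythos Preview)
Your proposal is correct and follows exactly the paper's approach: the theorem is stated as an immediate consequence of Theorem~\ref{thm2.16} and Theorem~\ref{mainthm}, with no additional argument beyond noting that these two results combine to give the commutativity. Your extra care about sign conventions and the interpretation of the bottom-right corner is more explicit than the paper's one-line deduction, but the logical content is the same.
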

\section{The number of complements for almost complete $(d-2)$-cluster tilting objects}

    Let $A$ be a dg algebra satisfying the properties $(\star)$. We have the following theorem regarding $\per (A)$ and $\c_A:=\per(A)/\pvd(A).$
   \begin{theorem}[{\cite{guo2,iyama-yang}}]\label{thm4.1}
	  Let $ T\in  \per (A)$ be a silting object,  then the image $\pi (T)$ under the projection functor $\pi : \per (A)\rga\c_A$ is a $(d-2)$-cluster tilting object in  $\c_A$.
   \end{theorem}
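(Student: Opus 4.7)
The plan is to reduce Theorem~\ref{thm4.1} to the case $T=A$, which is already covered by Theorem~1.4, by exploiting the fact that any silting object in $\per(A)$ induces a derived Morita equivalence. Set $B:=\mathbb{R}\End_{\per(A)}(T)^{op}$, realised as a dg algebra via a cofibrant replacement of $T$. Since $T$ is silting and $\thick T=\per(A)$, the functor $\mathbb{R}\Hom_A(T,-)\colon\per(A)\to\per(B)$ is a triangle equivalence carrying $T$ to $B$; this is a standard consequence of Keller's dg Morita theory.

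The second step is to verify that $B$ inherits the four properties $(\star)$ from $A$. Homological smoothness is a derived Morita invariant, as already recorded in the preliminaries. The silting condition on $T$ is precisely $H^rB=\Hom_{\per(A)}(T,T[r])=0$ for $r>0$. The $(m{+}2)$-Calabi--Yau bimodule isomorphism $\mathbb{R}\Hom_{A^e}(A,A^e)\simeq A[-m-2]$ (with $m=d-2$) transfers to $B$ along the induced equivalence $\d(A^e)\simeq\d(B^e)$. Finally, finite-dimensionality of $H^0B$ is the delicate point: one deduces it from Calabi--Yau duality on $\pvd(A)$ together with the silting hypothesis, in the spirit of \cite{iyama-yang,guo2}.

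Once $B$ is shown to satisfy $(\star)$, the equivalence $\per(A)\simeq\per(B)$ restricts to an equivalence $\pvd(A)\simeq\pvd(B)$, since under these hypotheses $\pvd$ sits inside $\per$ and is intrinsically characterised by finiteness of total homology. It therefore descends to a triangle equivalence of Verdier quotients $\c_A\simeq\c_B$ sending the canonical image $\pi_A(T)$ to $\pi_B(B)$. Theorem~1.4 applied to $B$ shows that $\pi_B(B)$ is a $(d-2)$-cluster tilting object in $\c_B$, and transporting this back along the equivalence proves the same for $\pi_A(T)$ in $\c_A$.

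The main obstacle is item~(3) of $(\star)$ for $B$, namely $\dim_k \End_{\c_A}(\pi T)=\dim_k H^0B<\infty$, which does not follow from silting alone. This is exactly where the machinery of \cite{iyama-yang,guo2} (silting reduction, Hom-finiteness of $\c_A$, and the Serre-duality comparison between $\per(A)$ and $\pvd(A)$) becomes indispensable; by contrast, the Calabi--Yau property and the vanishing of positive self-extensions transfer along the equivalence by essentially formal arguments.
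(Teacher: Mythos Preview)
The paper does not prove Theorem~\ref{thm4.1}; it is quoted from \cite{guo2,iyama-yang} without argument, so there is no in-paper proof to compare against.

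Your reduction via derived Morita equivalence is sound in outline, and you correctly isolate the one non-formal step: finite-dimensionality of $H^0B$. But as you yourself note, establishing this requires precisely the machinery of \cite{iyama-yang,guo2} that the theorem is citing in the first place, so as a self-contained argument it is circular. The approach actually taken in \cite{iyama-yang} is more direct and avoids passing through $B$: one proves (this is Theorem~\ref{thm4.3} here) that for any silting $T$ the projection $\pi$ restricts to an additive equivalence from the fundamental domain $\f=T\ast T[1]\ast\cdots\ast T[d-2]$ onto $\c_A$. Since both $T$ and $T[r]$ lie in $\f$ for $1\le r\le d-2$, one reads off $\Hom_{\c_A}(\pi T,\pi T[r])\cong\Hom_{\per(A)}(T,T[r])=0$ directly from the silting condition, and maximality is handled by a similar comparison. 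Finite-dimensionality of $\End_{\c_A}(\pi T)\cong H^0B$ is then an \emph{output} of this argument (via Hom-finiteness of $\c_A$, already known from the case $T=A$) rather than an input. Your route, once the gap is filled, ultimately repackages the same content with an extra layer of translation through $B$.
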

   \begin{theorem}[{\cite{guo2}}]\label{thm4.2}
   	Let $A=\Gamma_d(Q,W)$ be a Ginzburg  dg algebra such that there are no loops of  $Q$ at vertex $i$. Then the image of mutation sequence (Theorem~\ref{thm:silt-mutation}) \ \ 
   	 $\{\cdots,L^tA_i,\cdots,L^1A_i,A_{i},R^1A_i,\cdots,R^tA_i,\cdots \}$ at $i$ in $\c_A$ is periodic with period  $d-1$. Furthermore,  the almost complete $(d-2)$-cluster tilting object $\pi(A/A_i)$ has
   	exactly $d-1$ complements in $\c_A$, which correspond to all the objects occurring in the sequence.
   \end{theorem}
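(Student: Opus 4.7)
My plan is to combine two ingredients: the general principle that almost complete $m$-cluster tilting objects in $(m+1)$-Calabi--Yau triangulated categories admit exactly $m+1$ complements (here $m=d-2$, so $m+1=d-1$), and a direct analysis of the silting mutation sequence in $\per(A)$ showing that its images in $\c_A$ realise all of these complements.

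By Theorem~\ref{thm:silt-mutation}, each $R^tA_i$ fits into a triangle
\[R^{t-1}A_i\xrightarrow{f_t} B_t\to R^tA_i\to R^{t-1}A_i[1]\]
in $\per(A)$, where $B_t\in\add(A/A_i)$ and $f_t$ is a minimal left $\add(A/A_i)$-approximation; the $L^tA_i$ arise analogously. Applying $\pi:\per(A)\to\c_A$ and invoking Theorem~\ref{thm4.1}, each $\pi(R^tA_i)\oplus\pi(A/A_i)$ is a $(d-2)$-cluster tilting object in $\c_A$, so every $\pi(R^tA_i)$ is an indecomposable complement of the almost complete $(d-2)$-cluster tilting object $\pi(A/A_i)$. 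The no-loop assumption at $i$ is used here to guarantee that silting mutation at the $i$-th summand is well-defined and that $R^tA_i$ remains indecomposable, so that $\pi(R^tA_i)$ is a genuine candidate complement.

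To bound the number of distinct complements by $d-1$, I would invoke the Iyama--Yoshino / Zhou--Zhu type theorem in the $(d-1)$-Calabi--Yau setting: in a Hom-finite $(d-1)$-Calabi--Yau triangulated category with a $(d-2)$-cluster tilting object, every almost complete $(d-2)$-cluster tilting object admits exactly $d-1$ complements, parametrised by the exchange sequence of mutations. Combined with the previous paragraph, this forces the sequence $\{\pi(R^tA_i)\}_{t\in\mathbb{Z}}$ together with $\{\pi(L^tA_i)\}_{t\in\mathbb{Z}}$ to exhaust the set of complements and to be periodic of period dividing $d-1$. To pin the period down to exactly $d-1$, I would show that $\pi(R^sA_i)\not\cong\pi(R^tA_i)$ for $0\leq s<t\leq d-2$ by computing $\Hom_{\c_A}$-spaces between consecutive terms from the exchange triangles above, using the $(d-1)$-Calabi--Yau duality and the minimality of the approximations $f_t$: if two consecutive terms coincided, the corresponding exchange triangle would degenerate and contradict minimality.

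The main obstacle, in my assessment, is verifying that the silting mutation in $\per(A)$ projects \emph{faithfully} onto cluster mutation in $\c_A$, that is, that the exchange triangles produced by silting mutation really are the exchange triangles defining the $d-1$ complements in $\c_A$. One must check that no two distinct silting objects in the sequence differ only by a $\pvd(A)$-summand in a way that would artificially shorten the observed period, and that the intermediate approximations $B_t$ are themselves images of perfect approximations. This comes down to a careful bookkeeping with the resolutions of $S_i$ over $H^0(A)$, controlled by the cyclic derivatives of $W$ and by the degree-$(1-d)$ loop $l_i$ in Definition~\ref{def:Ginz}. Once this faithfulness is established, the periodicity and the count $d-1$ follow simultaneously.
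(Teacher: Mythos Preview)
The paper does not prove this theorem at all: it is quoted verbatim from Guo \cite{guo2} (as indicated by the citation in the theorem header) and used as a black box. There is therefore no ``paper's own proof'' to compare against. Your proposal is not a comparison target but an attempt to re-derive Guo's result from scratch.

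As for the substance of your sketch: the overall architecture is reasonable, but you are leaning on a result that is not quite available in the generality you need. The Zhou--Zhu/Wr\aa lsen theorem you invoke (\cite{zhou-zhu,rigid}) is proved for orbit $m$-cluster categories of hereditary algebras, not for arbitrary Hom-finite $(d-1)$-Calabi--Yau triangulated categories with a $(d-2)$-cluster tilting object. In the generalized setting of $\c_A$ one does not have this count a priori; indeed, Guo's paper \cite{guo2} is precisely where this is established for generalized higher cluster categories, so invoking it would be circular. What Guo actually does is closer to your ``faithfulness'' paragraph: she works directly with the fundamental domain $\mathscr{F}=A\ast A[1]\ast\cdots\ast A[d-2]$ (Theorem~\ref{thm4.3}) and tracks the silting mutation sequence inside $\per(A)$, using the no-loop hypothesis to control the approximation triangles and to show that $R^{d-1}A_i\cong A_i[d-1]$ maps to $\pi(A_i)$ under $\pi$, giving periodicity; the distinctness of the $d-1$ intermediate complements is then read off from their positions in $\mathscr{F}$. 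So your instinct that the real work lies in the projection $\per(A)\to\c_A$ is correct, but you cannot shortcut it by quoting a complement-count theorem that does not yet exist in this setting.
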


   For any silting object $T\in \per(A)$,   $\f=T\ast T[1]\ast\cdots\ast T[d-2] $ is a fundamental domain of $\c_A$ in $\per (A)$.
   \begin{theorem}[{\cite{iyama-yang}}]\label{thm4.3}
   	The functor $\pi:\per(A)\rga \c_A$ restricts to an equivalence $\f \rga \c_A$ of additive categories.
   \end{theorem}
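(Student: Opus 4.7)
The plan is to exhibit the fundamental domain $\f = T \ast T[1] \ast \cdots \ast T[d-2]$ as a set of preferred representatives for the Verdier quotient $\per(A)/\pvd(A)$, mirroring the strategy pioneered by Amiot and Keller–Reiten and subsequently used by Guo and Iyama–Yang.

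First I would exploit the two canonical torsion/co-torsion structures on $\per(A)$ attached to the silting object $T$. Concretely, the $t$-structure with aisle $\per(A)^{\leq 0}$ generated by $\{T[i]\}_{i\geq 0}$ and the co-$t$-structure with co-aisle generated by $\{T[-i]\}_{i\geq 0}$ both admit functorial truncation triangles. For an arbitrary $X\in\per(A)$ I would apply these truncations iteratively to show that, up to summands living in $\pvd(A)$, $X$ is isomorphic to an object filtered by $T[0],T[1],\ldots ,T[d-2]$, i.e.\ an object of $\f$. The key input here is property~(2) in $(\star)$ ($H^pA=0$ for $p>0$) which forces $\Hom_{\per(A)}(T,T[j])=0$ for $j>0$, together with the Hom-finiteness coming from property~(3), so that the filtration has only finitely many nonzero steps. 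This gives essential surjectivity of $\pi|_\f$.

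For full faithfulness I would rewrite morphisms in the Verdier quotient as
\begin{equation*}
\Hom_{\c_A}(X,Y)=\varinjlim\Hom_{\per(A)}(X',Y),
\end{equation*}
where the colimit runs over $s:X'\to X$ with $\Cone(s)\in\pvd(A)$. The task reduces to proving that, for $X,Y\in\f$, any such roof collapses to an ordinary morphism $X\to Y$ in $\per(A)$. Fullness comes from lifting: given $f:X'\to Y$ with $\Cone(s)\in\pvd(A)$, I would use the long exact sequence obtained by applying $\Hom_{\per(A)}(-,Y)$ to the triangle $X'\to X\to \Cone(s)\to X'[1]$, and show that $\Hom_{\per(A)}(\Cone(s)[-1],Y)=0$ by the $(d-1)$-Calabi–Yau duality of Theorem~1 in Section~1, which identifies this group with the dual of $\Hom_{\c_A}(Y,\Cone(s)[d-2])$; the latter vanishes because $Y\in\f$ lies in degrees $[0,d-2]$ with respect to the silting $T$, while $\Cone(s)\in\pvd(A)$, and the pairing of a perfect object supported in $[0,d-2]$ against a finite-dimensional one in the appropriate range produces no extensions. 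Faithfulness is dual: a morphism $X\to Y$ in $\f$ killed in $\c_A$ factors through an object of $\pvd(A)$, and the same Calabi–Yau/vanishing argument forces such a factorization to be zero.

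The main obstacle I anticipate is the precise identification of the vanishing $\Hom_{\per(A)}(\Cone(s),Y[k])=0$ and its dual for the relevant range of $k$; this is where the four properties~$(\star)$ must all be used in concert—smoothness to make the Calabi–Yau functor available, the $(m+2)$-CY isomorphism to flip perfect and finite-dimensional arguments, and $H^{>0}A=0$ to control the silting degrees. Once this orthogonality is nailed down, the equivalence $\f\xrightarrow{\sim}\c_A$ of additive categories follows formally from the Verdier quotient description. This approach is essentially the one in \cite{iyama-yang}; my exposition above simply reorganises their argument around the two structural inputs (silting truncations and Calabi–Yau duality).
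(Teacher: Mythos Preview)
The paper does not actually prove this theorem: it is stated with the citation \cite{iyama-yang} and used as a black box, so there is no ``paper's own proof'' to compare your proposal against. Your outline follows the standard Amiot/Guo/Iyama--Yang strategy (silting truncations for essential surjectivity, Calabi--Yau duality for the Hom-vanishing that yields full faithfulness), which is indeed the argument in the cited reference; in that sense your proposal is consistent with what the paper invokes, though strictly speaking the comparison task is vacuous here.
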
 

  Let $(Q,W)$ be a QsP associated to a $d$-angulation $(S,M,D)$ and $\Gamma$ the $d$-dimensional Ginzburg algebra associated to $(Q,W)$.
  By  Definition~\ref{def:QW} and Definition~\ref{def:Ginz}, we have
  \begin{equation}
  	H^p\Gamma =0 \ ,  \forall p>0.
  \end{equation}  
  Moreover, the Ginzburg algebra $\Gamma$ is homologically smooth and  $(m + 2)$-Calabi–Yau (see \cite{van,guo2}). 
  \begin{lemma}\label{no 0 loops}
    There are no loops of degree $0$ in $\Gamma$.
    \begin{proof}
      The arrow $\alpha$ of degree $0$ originates from two adjacent
  	arcs, $i,j$ in a single  $d$-gon of $(S,M,D)$, where $i\neq j$, as there are no punctures in our setting (see Figure~\ref{fig7}).
  	\begin{figure}[htpb]
  		\centering
  		\begin{tikzpicture}[scale=0.5]
  			\draw[ultra thick,dashed](0,0) circle (3);
  			\draw[red,thick](0,-3)to (0,0);
  			\draw[blue] (-.3,-2)node{$i$};
  			\draw[blue] (.3,-2)node{$j$};
  			\draw[red] (.4,0.4)node{$O$};
  			\draw(0,0)\rn (0,-3)\rn ;
  		\end{tikzpicture}
  		\quad \quad \quad
  		\begin{tikzpicture}[scale=0.4]
  			\draw[thick,dashed] (0,3)to[out=0,in=90](3,0)to[out=-90,in=0](0,-3);
  			\draw[red,thick](-3,0)to(0,3);
  			\draw[red,thick](-3,0)to(0,-3);
  			\draw[blue] (-1.3,1.2)node{$i$};
  			\draw[blue] (-1.3,-1.2)node{$j$};
  			\draw[red] (-2.4,0)node{$O$};
  			\draw(0,3)\rn (0,-3)\rn (-3,0)\rn;
  		\end{tikzpicture}
  		\caption{$i=j$ and $|\alpha |=0$}\label{fig7}
  	\end{figure}
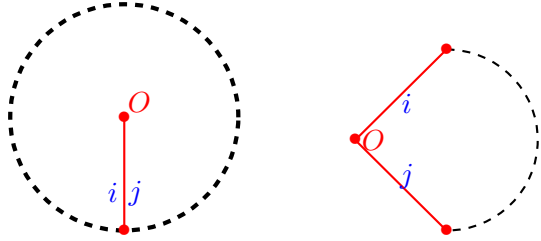
    \end{proof}
  \end{lemma}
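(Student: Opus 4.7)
The strategy is to analyze where a hypothetical degree-$0$ loop in $\Gamma$ could originate and rule out each possibility using the unpunctured assumption.

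First I would observe that the underlying graded quiver $\bar{Q}$ of the Ginzburg algebra $\Gamma=\td(Q,W)$ consists of the arrows of $Q$ together with the Ginzburg loops $l_v$ at each vertex $v$, and each $l_v$ has degree $1-d$. Since $d\geq 3$ by standing assumption, we have $1-d\leq -2$, so these Ginzburg loops cannot contribute loops of degree $0$. Consequently it suffices to show that $Q$ itself has no loops of degree $0$.

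Next, I would unravel the construction of $Q$ in Definition~\ref{def:QW}: arrows of $Q$ come from arrows of the pre-quiver $_pQ$ after gluing the two vertices $i^+,i^-$ associated to each arc $i$. By construction, $_pQ$ has no loops, since each arrow in $_pQ$ connects two distinct sides of one $d$-gon. Therefore a loop at vertex $i$ in $Q$ must arise from an arrow in $_pQ$ between $i^+$ and $i^-$, which in turn forces both of these vertices to lie in the same $d$-gon, i.e., forces the arc $i$ to be self-folded. Moreover, a degree-$0$ arrow in $_pQ$ arises from two adjacent sides of a $d$-gon (with no edges between them in the clockwise order). So a degree-$0$ loop at $i$ would require $i^+$ and $i^-$ to be adjacent in their common $d$-gon, i.e., either $k_1=0$ or $k_2=0$ in the notation of the remark following the definition of flip.

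Finally, I would invoke the remark, which observes that for an unpunctured surface one always has $k_1,k_2\geq 1$: indeed, if $k_j=0$ the fold region of the self-folded arc would collapse to a single interior marked point, which would be a puncture, contradicting the assumption that $(S,M)$ is unpunctured. This contradicts the requirement that $i^+$ and $i^-$ be adjacent, so no such loop can exist. The only substantive ingredient is the topological assertion $k_1,k_2\geq 1$ in the unpunctured case, which is already part of the remark; once that is in hand, the rest is a direct bookkeeping argument, so I do not anticipate a real obstacle.
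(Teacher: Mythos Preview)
Your proposal is correct and follows essentially the same approach as the paper: both argue that a degree-$0$ arrow comes from two adjacent sides of a single $d$-gon, and that these sides cannot coincide because the absence of punctures forces $k_1,k_2\geq 1$ for any self-folded arc. Your version is simply more explicit---you separately dispose of the Ginzburg loops $l_v$ and spell out the reduction to the self-folded case via the pre-quiver---whereas the paper compresses the whole argument into one sentence and a figure.
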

  \begin{lemma}
  	$dim_k H^0\Gamma <\infty$.
  \end{lemma}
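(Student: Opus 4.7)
The plan is to realise $H^0\Gamma$ explicitly as a quotient of the path algebra of a finite subquiver of $\bar Q$ by a finite set of combinatorial relations, and then bound the surviving basis.

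First I would identify the degree-$0$ component. In $\bar Q$, each loop $l_i$ has degree $1-d\le -2$, and each opposite arrow $\varphi^{\mathrm{op}}$ of $\varphi\in Q_1$ has degree $2-d-|\varphi|\le -1$ (since $|\varphi|\ge 2-d$ and $d\ge 3$). Hence any path of total degree $0$ in $\bar Q$ uses only the degree-$0$ arrows of $Q$, and so $\Gamma^{0}=\k Q^{(0)}$, where $Q^{(0)}$ denotes the subquiver of degree-$0$ arrows. A similar degree count shows that every degree-$(-1)$ path in $\bar Q$ decomposes as degree-$0$ arrows followed by a single degree-$(-1)$ arrow of $Q$ followed by degree-$0$ arrows (loops and opposite arrows are too negative to enter). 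By the graded Leibniz rule, the image of $d\colon\Gamma^{-1}\to\Gamma^{0}$ is therefore the two-sided ideal generated by $d\varphi=\partial_{\varphi^{\mathrm{op}}}W$ for $\varphi\in Q_1$ with $|\varphi|=-1$. So
\[
H^{0}\Gamma \;\cong\; \k Q^{(0)}\Big/\bigl\langle\,\partial_{\varphi^{\mathrm{op}}}W : \varphi\in Q_1,\ |\varphi|=-1\,\bigr\rangle.
\]

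Next I would translate both sides into the combinatorics of the $d$-angulation. A vertex of $Q^{(0)}$ is an arc of $D$; an arrow in $Q^{(0)}$ from $i$ to $j$ exists exactly when $i,j$ are consecutive non-boundary sides of a common $d$-gon of $D$. A degree-$(-1)$ arrow $\varphi\colon i\to j$ corresponds to two arcs separated by exactly one intermediate side of a $d$-gon, and $\partial_{\varphi^{\mathrm{op}}}W$ is a $\k$-linear combination of length-$2$ paths in $Q^{(0)}$ obtained from the $3$-cycles of $W$ supported in that $d$-gon. In the case $d=3$ one recovers the standard gentle-algebra relations $a_{ij}a_{jk}=0$; for $d\ge 4$, the same local description produces length-two relations that allow one to ``shortcut'' consecutive degree-$0$ arrows sitting inside one $d$-gon.

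Using these local relations, I would bound the length of surviving paths. Any path in $Q^{(0)}$ is a walk along arcs of $D$, turning clockwise at marked points and alternating between the (at most two) $d$-gons sharing each arc. A repeated application of the length-two relations above shows that inside a single $d$-gon, walks of length exceeding a constant $c(d)$ depending only on $d$ reduce modulo $\partial W$ to linear combinations of shorter ones. Since $D$ has finitely many $d$-gons (the count at the end of Section~2) and finitely many arcs, only finitely many inequivalent walks can survive after local reduction in every $d$-gon, which gives a finite $\k$-spanning set of $H^{0}\Gamma$.

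The main obstacle is making the last reduction step rigorous: one has to show not only that each $d$-gon-local segment of a walk is bounded, but also that longer global walks cannot arise by weaving through many $d$-gons without being expressible via the same local relations. I expect two viable routes: (i) an explicit reduction system for $\k Q^{(0)}$ modulo $\langle\partial W\rangle$, organised $d$-gon by $d$-gon and requiring a careful bookkeeping of signs (see Remark~\ref{rmk:sign}); or (ii) an appeal to the existing description of the same quiver with superpotential in the literature on higher angulations (\cite{merlin}, Section~4.3, and \cite{IQ}, Section~8), which models $H^{0}\Gamma$ as the endomorphism algebra of a specific object in a Hom-finite category and therefore yields finite-dimensionality at once.
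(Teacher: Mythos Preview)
Your identification of $H^0\Gamma$ as $\k Q^{(0)}/\langle\partial_{\varphi^{\mathrm{op}}}W:|\varphi|=-1\rangle$ is correct and a good starting point, but the completion you propose is not actually carried out: you flag the global ``weaving'' step as the main obstacle and offer only a sketch of route~(i) and a deferral to the literature for route~(ii). The paper sidesteps this obstacle entirely with a short geometric argument that you come close to but do not extract.

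The paper argues by contradiction: if $H^0\Gamma$ were infinite-dimensional then, since $Q^{(0)}$ has finitely many arrows, some oriented cycle $p=\alpha_1\cdots\alpha_k$ of degree~$0$ must survive in $H^0\Gamma$; take a shortest one. Your own local computation already gives that if $\alpha_t,\alpha_{t+1}$ lie in the same $d$-gon then $\alpha_t\alpha_{t+1}=\pm d(\beta)$ for the unique degree-$(-1)$ arrow $\beta$ spanning them (the only intermediate edge is the arc $t(\alpha_t)$, hence $\partial_{\beta^{\mathrm{op}}}W$ has a single term), so $p=0$ in $H^0\Gamma$. Hence consecutive arrows of $p$ lie in \emph{distinct} $d$-gons sharing the intermediate arc. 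But then---and this is precisely your ``turning clockwise at marked points'' observation, pushed one step further---all the arcs $s(\alpha_1),s(\alpha_2),\ldots$ share a common endpoint $O$, and the cycle $p$ winds once around $O$. Since the $d$-gons visited close up around $O$, the point $O$ must lie in the interior of $S$, contradicting the standing hypothesis that $(S,M)$ is unpunctured.

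So the missing idea is simply to restrict attention to \emph{cycles} rather than arbitrary long paths, and to invoke the unpunctured hypothesis directly: a surviving degree-$0$ cycle would witness a puncture. No reduction system, length bound $c(d)$, or appeal to external Hom-finiteness results is needed.
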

  \begin{proof}
  	It follows from the definition that $\Gamma$ is a non-positive dg quiver algebra. 	
  	If $dim_k H^0\Gamma =\infty$, there exists a cyclic path $p=\alpha_1\alpha_2 \cdots \alpha_k\neq 0$ with the least number of arrows such that $|p|=0$, since there are only finite arrows. By Lemma~\ref{no 0 loops}, $k\geq 2$ and $ \alpha_{s}\neq \alpha_{t}$ for any  $s\neq t, s,t \in \{1,2,\cdots ,k\}$. If  two adjacent arrows in $p$ come from the same $d$-gon, 
  	then there exists $\beta$ such that $d(\beta)=\alpha_t \alpha_{t+1}$ (see Figure~\ref{fig8}). This contradicts the assumption that $p\neq 0$.
  	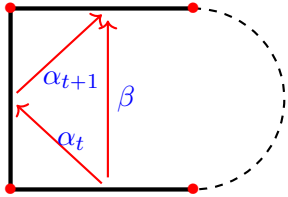
\begin{figure}[htpb]
  		\centering
  		\begin{tikzpicture}[scale=0.8]
  			\draw[ultra thick](0,0)to (3,0)  (0,0)to (0,-3)  (0,-3)to(3,-3);
  			\draw[thick,dashed] (3,0)to[out=0,in=90](4.5,-1.5)to[out=-90,in=0](3,-3);
  			\draw[red,thick,->] (1.5,-2.9)to(.1,-1.6) ;
  			\draw[red,thick,->]  (.1,-1.4)to(1.5,-0.1);
  			\draw[red,thick,->]  (1.6,-2.8)to(1.6,-0.2);
  			\draw[blue] (1,-2.2)node{$\alpha_t$};
  			\draw[blue] (1,-1.2)node{$\alpha_{t+1}$};
  			\draw[blue] (1.9,-1.5)node{$\beta$};
  			\draw(0,0)\rn (0,-3)\rn  (3,0)\rn (3,-3)\rn;
  		\end{tikzpicture}
  		\caption{The $d$-gon contain $\alpha_t,\alpha_{t+1}$} \label{fig8}
  	\end{figure}
  	
  	Therefore, $\alpha_t$ and $\alpha_{t+1}$  lie in adjacent $d$-gons for all $t\in\{1,2,\cdots,k \}$ with $\alpha_{k+1}=\alpha_{1}$. 
  	Then the common endpoints of $\alpha_t$ is a marked point locating in the interior of $S$ (see Figure~\ref{fig9}), a contradiction.
  	Thus, we conclude that
  	$dim_k H^0\Gamma <\infty$. 
  	\begin{figure}[htpb]
  		\centering
  		\begin{tikzpicture}[scale=0.7]
  			\draw[ultra thick](0,0)to (3,0)  (0,0)to (1.3,2.4)  (0,0)to(2.1,1.5) (0,0)to (2.1,-1.5);
  			\draw[thick,dashed] (0,1)to[out=180,in=90](-1,0)to[out=-90,in=240](.6,-.7);
  			\draw[red,thick,->] (1.6,-1)to(1.8,-.2) ;
  			\draw[red,thick,->] (1.8,.2) to(1.6,0.9) ;
  			\draw[red,thick,->] (1.5,1.2) to(1,1.6) ;
  			\draw[blue] (2.1,-.7)node{$\alpha_k$};
  			\draw[blue] (2.3,.6)node{$\alpha_{1}$};
  			\draw[blue] (1.7,1.8)node{$\alpha_{2}$};
  			\draw(0,0)\rn ;
  			\draw[red] (-.4,0.2)node{$O$};
  		\end{tikzpicture}
  		\caption{}\label{fig9}
  	\end{figure}
  \end{proof}
 
  So the $d$-dimensional Ginzburg algebra $\Gamma$ associated with $(Q,W)$, and consequently with $(S,M,D)$, satisfies the properties ($\star$).
  Therefore, we obtain the generalized higher cluster category corresponding to
  $(Q,W)$
  \begin{align}\label{4.2}
  	\c_{\Gamma}:=\per(\Gamma)/\pvd(\Gamma)
  \end{align}  
  and the projection functor $\pi : \per (\Gamma)\rga\c_{\Gamma}$. By Theorem~\ref{thm4.1}, we know that $\pi(\Gamma)$ is a $(d-2)$-cluster tilting object in $\c_{\Gamma}$.
  
  \begin{definition}
    We call the $d$-angulation $(S,M,D)$ is a  geometric
  model for the generalized higher cluster category $\c_{\Gamma}$.
  \end{definition}
  
  Denote by $C$ the set of all $(d-2)$-cluster tilting objects obtained by finite mutations of $\pi(\Gamma)$, and by $C'$ the set of all $d$-angulations  obtained by finite flips of $(S,M,D)$. 
  \begin{conjecture}
    There is a bijection between $C$ and $C'$ which sends $\pi(\Gamma)$ to $(S,M,D)$.
  \end{conjecture}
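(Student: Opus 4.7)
The plan is to define a map $\Phi : C' \to C$ sending $D$ to $\pi(\Gamma)$ and compatible with flips, show surjectivity by reversing the construction, and attack injectivity by building a partial inverse; the first two parts are essentially formal consequences of the compatibility results established in Sections~2 and 3, while injectivity is the genuine difficulty. Given $D' \in C'$ realized as $\mu_{i_k}\cdots\mu_{i_1}(D)$, define $\Phi(D')$ to be the image under $\pi$ of the silting object obtained from $\Gamma$ by the corresponding sequence of silting mutations in $\per(\Gamma)$. Each silting mutation is well-defined because Lemma~\ref{no 0 loops} rules out degree-$0$ loops at any vertex being mutated; by Theorem~\ref{thm:1.9} the mutated endomorphism dg algebra is the Ginzburg algebra of $\mu_i'(Q,W)$; Theorem~\ref{thm2.16} and Theorem~\ref{commutative diagram} then identify this, up to quasi-isomorphism, with the intrinsically defined $\Gamma_{D'}$. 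Hence $\Phi(D') \cong \pi(\Gamma_{D'})$ depends only on $D'$, not on the flip sequence, and Theorem~\ref{thm4.1} places $\Phi(D')$ in $C$. By construction $\Phi(D) = \pi(\Gamma)$, so the base point is preserved.

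Surjectivity follows by reversing this correspondence: any $T \in C$ is produced from $\pi(\Gamma)$ by a finite mutation sequence; lifting each step to a silting mutation in $\per(\Gamma)$ via Theorem~\ref{thm4.3}, and then invoking Theorem~\ref{thm:1.9} and Theorem~\ref{thm2.16}, turns the mutation sequence into a flip sequence producing $D' \in C'$ with $\Phi(D') \cong T$. At every step, Lemma~\ref{no 0 loops} again ensures that the mutated vertex carries no degree-$0$ loop, so the silting and Ginzburg mutations are all legal.

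Injectivity is the main obstacle. I would construct an inverse $\Psi : C \to C'$ on indecomposable summands: each indecomposable summand $T_j$ of $T \in C$ should correspond to an isotopy class of arc on $\surf$, and $\Psi(T) = \{\Psi(T_1), \ldots, \Psi(T_n)\}$ should assemble into a $d$-angulation. Inductively, a mutation $T \to T'$ exchanging $T_j$ for a new complement replaces $\Psi(T_j)$ by its flip; the critical point is that the resulting $\Psi(T)$ must be path-independent. The natural local check is given by Theorem~\ref{thm4.2}: an almost complete $(d-2)$-cluster tilting object has exactly $d-1$ complements, and these should match canonically with the $d-1$ arcs that can complete a fixed almost $d$-angulation inside a union of two adjacent $d$-gons. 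Globalizing this local matching, together with connectedness of the flip graph on an unpunctured marked surface, would yield the required inverse and hence injectivity of $\Phi$. The principal technical difficulty is the canonical local pairing between the periodic sequence of $d-1$ complements and the $d-1$ possible flipped arcs; establishing it would likely require a cluster-character or geometric intersection-number dictionary pairing $\dim\Hom_{\c_\Gamma}(T_j, T_k[r])$ with geometric intersections of the corresponding arcs, analogous to the triangulation case but not yet developed in the $d$-angulated setting. This is where the bulk of the work lies, and is the main reason the statement is posed as a conjecture rather than a theorem.
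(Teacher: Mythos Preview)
The statement is a \emph{conjecture} in the paper: the authors give no proof, noting only that it holds in special cases (type $A_n$ and $\tilde{A}_n$). So there is no proof to compare against, and you correctly signal in your last sentence that the argument is incomplete. What you have written is essentially the content of Lemma~\ref{lemma4.9}: flip sequences and cluster-tilting mutation sequences can be run in parallel. That lemma is as far as the paper goes.

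That said, two of your intermediate claims are more problematic than you seem to realize. First, your assertion that $\Phi(D')\cong\pi(\Gamma_{D'})$ ``depends only on $D'$, not on the flip sequence'' is not justified by the quasi-isomorphism argument you give. Theorems~\ref{thm:1.9} and~\ref{commutative diagram} show only that the derived endomorphism algebra of the mutated silting object is quasi-isomorphic to the intrinsic $\Gamma_{D'}$; they do \emph{not} show that two flip sequences reaching the same $D'$ produce isomorphic silting objects in $\per(\Gamma)$, let alone isomorphic images in $\c_\Gamma$. Distinct silting objects can have quasi-isomorphic endomorphism algebras. So well-definedness of $\Phi$ is already the heart of the conjecture, and your ``formal'' first step quietly assumes it.

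Second, your injectivity strategy invokes Theorem~\ref{thm4.2} (exactly $d-1$ complements) to match complements with flipped arcs locally. But Theorem~\ref{thm4.2} requires the vertex to carry \emph{no loops at all}, not merely no degree-$0$ loops; self-folded arcs produce loops of nonzero degree, so the theorem does not apply there. The paper makes this explicit: Theorem~\ref{thm4.10} restricts to non-self-folded arcs, and Example~\ref{ex.4.12} shows that for a self-folded arc the mutation sequence in $\c_\Gamma$ is \emph{not} periodic of period $d-1$. Your local-pairing approach therefore breaks down precisely at self-folded arcs, and any proof of the conjecture must handle them separately.
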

  The conjecture is true for some special $(S,M,D)$ corresponding to (classical) $m$-cluster categories of type $A_n$ and type $\tilde{A_n}$ (see \cite{BM02,BT20,LJM}). 
  \begin{example}
    	\begin{enumerate}
    		\item $d=4$, $\c_{A_4}$ has a  geometric model as the left picture in  Figure~\ref{fig11}.
    		\item $d=4$, $\c_{\tilde{A_4}}$ has a  geometric model as  the right picture in  Figure~\ref{fig11}.
    	\end{enumerate}	
    	\begin{figure}[htpb]
    		\centering
    		\begin{tikzpicture}[scale=0.4]
    			\draw[ultra thick](0,0) circle (3);
    			\draw  (2.6,1.5)\rn (-2.6,1.5)\rn(2.6,-1.5)\rn(-2.6,-1.5)\rn(2.6,1.5)\rn (3,0)\rn (0,3)\rn (-3,0)\rn (0,-3)\rn (1.5,2.6)\rn  (-1.5,2.6)\rn(1.5,-2.6)\rn(-1.5,-2.6)\rn;
    			\draw[blue, thick](-2.6,-1.5)to(-1.5,2.6); 
    			\draw[blue, thick](-2.6,-1.5)to(1.5,2.6);
    			\draw[blue, thick](-2.6,-1.5)to(3,0); 
    			\draw[blue, thick](-2.6,-1.5)to(1.5,-2.6);  
    			\begin{scope}[shift={(9,0)}]
    				\draw[ultra thick](0,0) circle (3);
    				\draw[ultra thick](0,0) circle (1);
    				\draw   (3,0)\rn (0,3)\rn (-3,0)\rn (0,-3)\rn (1,0)\rn (0,1)\rn (-1,0)\rn (0,-1)\rn;
    				\draw[blue, thick](0,3)to[out=-40,in=80](1,0); 
    				\draw[blue, thick](0,3)to[out=220,in=100](-1,0); 
    				\draw[blue, thick](0,-3)to[out=40,in=-800](1,0); 
    				\draw[blue, thick](0,-3)to[out=140,in=-100](-1,0);		
    			\end{scope}	
    		\end{tikzpicture}
    		\caption{$d=4$, $\c_{A_4}$ and $\c_{\tilde{A_4}}$}\label{fig11}
    	\end{figure}
     \end{example}
  
  For  $T=\mu_{s_k}\cdots\mu_{s_2}\mu_{s_1}(\pi(\Gamma)) \in C$, we denote by  $(S,M,\mu_{s_k}\cdots\mu_{s_2}\mu_{s_1}(D))$  the corresponding $d$-angulation of $T$. Then, we have the following theorem regarding the number of complements for
    $T$. 
    \begin{lemma}\label{lemma4.9}
      We have the following commutative diagram: 
      \begin{align*}
    		\xymatrix@C=4pc{(S,M,D) \ar[r] \ar[d]^{\text{flips}}&(Q,W)\ar[d]^{ \text{mutations}}\ar[r]^{\text{Ginzburg}}&\Gamma \ar[d]^{ \text{mutations}}\ar[r]& \pi(\Gamma)\ar[d]^{ \text{mutations}}\\
    			(S,M,\mu_{s_k}\cdots\mu_{s_2}\mu_{s_1}(D)) \ar[r] & \mu^{'}_{s_k}\cdots\mu^{'}_{s_2}\mu^{'}_{s_1}(Q,W) \ar[r]^-{\text{Ginzburg}} & \Gamma^{'} \ar[r] & T.}
    	\end{align*} 
    Therefore, the indecomposable direct summand $T_j$ of $T$  correspond to an arc denoted by $j$ on $(S,M,\mu_{s_k}\cdots\mu_{s_2}\mu_{s_1}(D))$.
    \end{lemma}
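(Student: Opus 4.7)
The plan is to prove the lemma by induction on $k$, reducing the assertion to a single mutation step and then invoking the three commutativities separately. For the base case $k=0$ the diagram is tautological: the arcs of $D$ index the vertices of $Q$ (Definition~\ref{def:QW}), these index the idempotents of $\Gamma$, and the image $\pi(\Gamma) = \bigoplus_j \pi(e_j\Gamma)$ inherits this indexing. The inductive step amounts to verifying commutativity for one mutation $\mu_{s}$, and this splits into three independent squares which we can glue.

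For the leftmost square $(S,M,D)\to (Q,W)\to (S,M,\mu_s(D))\to \mu'_s(Q,W)$, commutativity is precisely the content of Theorem~\ref{commutative diagram}, which itself packages Theorem~\ref{thm2.16} (flip versus $\mu'_s$) together with Theorem~\ref{mainthm} (the quasi-isomorphism $\Gamma_1 \simeq \Gamma_2$ between the Ginzburg algebras of $\mu_s(Q,W)$ and $\mu'_s(Q,W)$). For the middle square, Theorem~\ref{thm:1.9} of Oppermann gives exactly what we need: the Ginzburg functor intertwines QsP-mutation with silting mutation of the associated dg quiver algebra, up to quasi-isomorphism. Composing these two squares, one passes from $(Q,W)$ along $\mu'_s$ to $\mu'_s(Q,W)$, then by Ginzburg to $\Gamma'$, and this agrees up to quasi-isomorphism with first taking Ginzburg to $\Gamma$ and then silting mutation at $s$.

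For the rightmost square, I would argue that the projection functor $\pi:\per(\Gamma)\to\c_\Gamma$ sends silting mutation to $(d-2)$-cluster tilting mutation. Concretely, if $T = T_s \oplus T_{/s}$ is silting in $\per(\Gamma)$ and $T_s \xrightarrow{f} \tilde{T}_{/s} \to \Cone(f) \to T_s[1]$ is the exchange triangle defining $\mu_{T_s}(T)$, then this triangle lies in the fundamental domain $\f = T \ast T[1] \ast \cdots \ast T[d-2]$, so by Theorem~\ref{thm4.3} the functor $\pi$ restricted to $\f$ is fully faithful and $\pi(\mu_{T_s}(T)) = \mu_{\pi(T_s)}(\pi(T))$ is the corresponding $(d-2)$-cluster tilting mutation. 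Iterating and combining all three squares yields the full commutative diagram. The correspondence in the final clause then follows immediately: indecomposable summands of $T$ are the images under $\pi$ of the indecomposable summands of $\Gamma'$, which correspond to vertices of the quiver of $\mu'_{s_k}\cdots\mu'_{s_1}(Q,W)$, which by Definition~\ref{def:QW} correspond bijectively to the arcs of $(S,M,\mu_{s_k}\cdots\mu_{s_1}(D))$.

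The main obstacle, I expect, is the rightmost square rather than the other two (which are already packaged as theorems). Specifically, one must check that the silting mutation performed in $\per(\Gamma)$ projects cleanly onto the $(d-2)$-cluster tilting mutation in $\c_\Gamma$, and that the quasi-isomorphism $\Gamma_1 \simeq \Gamma_2$ from Theorem~\ref{mainthm} induces a genuine identification of the mutated cluster tilting objects (not merely abstractly isomorphic ones). This boils down to invoking the fundamental domain equivalence of Theorem~\ref{thm4.3} together with the fact that the exchange triangle for silting mutation has all three terms in $\f$ since $T_s[1]$ lives one shift above $T$; the subtlety lies only in keeping track of the indexing of summands through the quasi-isomorphism.
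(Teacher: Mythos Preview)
Your proposal is correct and follows essentially the same approach as the paper: reduce to a single mutation step, invoke Theorems~\ref{thm2.16}, \ref{mainthm} and \ref{thm:1.9} for the left and middle squares, and for the rightmost square apply $\pi$ to the silting exchange triangle and use Theorems~\ref{thm4.1} and \ref{thm4.3} to conclude $R(\pi((\Gamma)_i))=\pi(R((\Gamma)_i))$, then iterate. The paper's write-up is slightly terser but the structure and ingredients are the same as yours.
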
 
    \begin{proof}
       Let $\Gamma=\oplus(\Gamma)_j$ as a direct sum of indecomposable objects in $\per (\Gamma)$, and  $\mu_{i}(\Gamma)=\oplus_{j\neq i}(\Gamma)_j\oplus R((\Gamma)_i)$  is the mutation of $\Gamma$ at $i$ in $\per (\Gamma)$. Denote by $\Gamma_2$ the derived endomorphism dg algebra of $\mu_i(\Gamma)$ in $\per (\Gamma)$. Since $\per (\Gamma) \simeq \per (\Gamma_2)$, which gives an equivalence between $\add \mu_i(\Gamma)$ and $\add \Gamma_2$.
    	By Theorem~\ref{mainthm}, Theorem~\ref{thm2.16} and Theorem~\ref{thm:1.9}, we have the following commutative diagram:
    	\begin{align*}
    		\xymatrix@C=4pc{(S,M,D) \ar[r] \ar[d]^{\text{flip}}&(Q,W)\ar[d]^{ \text{mutation}}\ar[r]^{\text{Ginzburg}}&\Gamma \ar[d]^{ \text{silting mutation}}\\
    			(S,M,\mu_i(D)) \ar[r] &\mu^{'}_{i}(Q,W) \ar[r]^{\text{Ginzburg}}&\Gamma_2 .}
    	\end{align*}
    	Applying the projection functor $\pi$ to the mutation triangle in $\per (\Gamma)$
    \begin{equation*}
              \begin{aligned}
    			(\Gamma)_i\rga &S\rga R((\Gamma)_i) \rga (\Gamma)_i[1],
    		\end{aligned}
    \end{equation*}
     we have the  mutation triangle in $\c_{\Gamma}$ by Theorem~\ref{thm4.1} and Theorem~\ref{thm4.3}   	
    	\begin{equation*}
    		\begin{aligned}
    			\pi((\Gamma)_i)\rga &\pi(S)\rga \pi(R((\Gamma)_i)) \rga \pi((\Gamma)_i)[1].
    		\end{aligned}
    	\end{equation*} 
    It follows that $R(\pi((\Gamma)_i))=\pi(R((\Gamma)_i))$.    	
    	So the assertion follows by repeating this process.
    \end{proof}
    
    \begin{theorem}\label{thm4.10}
    	Let $T$ be an cluster tilting object in $C$ and $T_j$ be an  indecomposable direct summand of $T$. The almost complete $(d-2)$-cluster tilting object $T/T_j$ has
    	exactly $d-1$ complements if the corresponding arc  $j$ on $(S,M,\mu_{s_k}\cdots\mu_{s_2}\mu_{s_1}(D))$ is not  self-folded.
    \end{theorem}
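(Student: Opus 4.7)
The plan is to reduce Theorem~\ref{thm4.10} to the general counting result of Guo (Theorem~\ref{thm4.2}) via the compatibility diagram of Lemma~\ref{lemma4.9}, together with a geometric observation characterizing when the quiver associated to a $d$-angulation has loops.

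First, Lemma~\ref{lemma4.9} identifies $T$ (up to quasi-isomorphism) with $\pi(\Gamma')$, where $\Gamma'$ is the Ginzburg dg algebra of the QsP $(Q',W') := \mu'_{s_k}\cdots\mu'_{s_1}(Q,W)$ associated to the flipped $d$-angulation $D' := \mu_{s_k}\cdots\mu_{s_1}(D)$. Under this identification, the indecomposable summand $T_j$ is the image $\pi(\Gamma'_j)$ of the idempotent summand of $\Gamma'$ at vertex $j$, so $T/T_j \simeq \pi(\Gamma'/\Gamma'_j)$, and the complements of $T/T_j$ in $\c_\Gamma$ are in bijection with the complements of $\pi(\Gamma'/\Gamma'_j)$ in $\c_{\Gamma'}$. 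Thus it suffices to exhibit exactly $d-1$ complements of $\pi(\Gamma'/\Gamma'_j)$ in $\c_{\Gamma'}$.

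By Theorem~\ref{thm4.2}, this follows once we verify that the vertex $j$ admits no loops in the quiver $Q'$. The key geometric claim, established by direct inspection of Definition~\ref{def:QW}, is the following: for any $d$-angulation $(S,M,E)$ of an unpunctured marked surface, a vertex $v$ of the associated quiver $Q_E$ carries a loop if and only if the corresponding arc is self-folded in $E$. Indeed, each pre-quiver $Q(D_i)$ attached to a $d$-gon $D_i$ is a directed complete graph on the non-boundary edges of $D_i$ and hence is loop-free; a loop at $v$ in the glued quiver $Q_E$ can therefore arise only when two distinct vertices of some $Q(D_i)$ are identified during gluing, i.e.\ when a single arc occurs twice on the boundary of one $d$-gon, which is exactly the self-folded condition. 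Conversely, a self-folded arc $v$ lies as two sides of a common $d$-gon, and the arrows between them in that pre-quiver descend to (possibly several, of various negative degrees) loops at $v$.

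The main obstacle is this geometric claim, which, although conceptually clear, requires careful bookkeeping to enumerate all possible sources of arrows in $Q_{D'}$ and to rule out any subtler way a loop could arise (for instance through interaction of arrows coming from the two $d$-gons sharing $v$). With the claim in hand, the non-self-folded hypothesis on $j$ in $D'$ ensures that $Q'$ has no loop at $j$, so Theorem~\ref{thm4.2} applies at vertex $j$ and yields exactly $d-1$ complements of $\pi(\Gamma'/\Gamma'_j)$, completing the proof.
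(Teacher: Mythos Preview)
Your proposal is correct and follows essentially the same approach as the paper: reduce via Lemma~\ref{lemma4.9} to the Ginzburg algebra $\Gamma'$ of the flipped $d$-angulation, observe that a non-self-folded arc corresponds to a vertex without loops in $Q'$ (the paper simply cites Definition~\ref{def:QW} for this), and then invoke Theorem~\ref{thm4.2}. You spell out more explicitly than the paper both the transfer along the triangle equivalence $\c_\Gamma \simeq \c_{\Gamma'}$ and the loop-versus-self-fold characterization, but these are exactly the ingredients the paper's two-line proof is relying on.
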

    \begin{proof}
    	 If the corresponding arc  $j$ on $(S,M,\mu_{s_k}\cdots\mu_{s_2}\mu_{s_1}(D))$ is not  self-folded, then there are no loops at $j$ in  $\mu^{'}_{s_k}\cdots\mu^{'}_{s_2}\mu^{'}_{s_1}(Q,W)$ by  Definition~\ref{def:QW} and Lemma~\ref{lemma4.9}. It follows from Theorem~\ref{thm4.2} that the almost complete $(d-2)$-cluster tilting object $T/T_j$ has
    	exactly $d-1$ complements. 
    \end{proof}  
    \begin{remark}
       When $j$ is  not a self-folded arc, we flip $j$ to itself after $(d-1)$ times. However, when $j$ is a self-folded arc, the situation is different (see Figure~\ref{fig10}). The result in Theorem~\ref{thm4.10} for self-folded arc $j$ is not right (see Example~\ref{ex.4.12}). 
    \end{remark}
    \begin{figure}[htpb]
    	\centering
    	\begin{tikzpicture}[scale=0.5]
    		\draw[ultra thick](0,0) circle (3);
    		\draw[ultra thick](0,0) circle (1.5);
    		\draw[blue,thick](0,-1.5)to(0,-3.0); 
    		\draw(-1.5,0)\rn (1.5,0)\rn (0,-1.5)\rn (-3,0)\rn (3,0)\rn (0,-3)[red]\rn  (0,1.5)\rn;
    		\draw[purple,thick,dashed,->] (-0.1,-1.7)to[out=180,in=-90](-1.7,-0.1);
    		\draw[purple,thick,dashed,->] (0.1,-2.8)to[out=0,in=-90](2.8,-0.1);
    		\draw[] (4.5,0) node {\Large{$\Longrightarrow$}};
    		\begin{scope}[shift={(9,0)}]
    			\draw[ultra thick](0,0) circle (3);
    			\draw[ultra thick](0,0) circle (1.5);
    			\draw[blue,thick](-1.5,0)to[out=260,in=180](0,-2.4)to[out=0,in=250](3,0); 
    			\draw(-1.5,0)\rn (1.5,0)\rn (0,-1.5)\rn (-3,0)\rn (3,0)\rn (0,-3)[red]\rn (0,1.5)\rn;
    			\draw[purple,thick,dashed,->] (-1.7,0.1)to[out=90,in=180](-0.1,1.7);
    			\draw[purple,thick,dashed,->] (2.8,0.1)to[out=90,in=0](0,2.8)to[out=180,in=90](-2.8,0.1);
    			\draw[] (4.5,0) node {\Large{$\Longrightarrow$}};
    			\begin{scope}[shift={(9,0)}]
    				\draw[ultra thick](0,0) circle (3);
    				\draw[ultra thick](0,0) circle (1.5);
    				\draw[blue,thick](0,1.5)to[out=170,in=90](-2,0)to[out=-90,in=180](0,-2)to[out=0,in=-90](2,0)to[out=90,in=0](0,2.2)to[out=180,in=70](-3,0); 
    				\draw(-1.5,0)\rn (1.5,0)\rn (0,-1.5)\rn (-3,0)\rn (3,0)\rn (0,-3)[red]\rn (0,1.5)\rn;		
    			\end{scope}	
    		\end{scope}	
    	\end{tikzpicture}
    	\caption{}\label{fig10}
    \end{figure}
\begin{example}\label{ex.4.12}
	Suppose that $d=5$. Let $(S,M,D)$ be a $d$-angulation as shown by Figure~\ref{fig12}. Then the Ginzburg algebra $\Gamma$ associated to $(S,M,D)$ has the underlying graded quiver as follows
	\begin{align*}
		\xymatrix@C=6pc{\ar@(lu,ru)^{a}
			1 \ar@(ru,rd)^{b} \ar@(lu,ld)_{t}  }
	\end{align*}
	with $|a|=-1$, $|b|(=|a^{op}|)=-2$ and $|t|=-4$.  The differential takes the following values
	\[
	 d(a)=0,\quad d(b)=0,\quad d(t)=ab-ba.
	\]
	The algebra $\Gamma$ is an indecomposable object in  $\per (\Gamma)$. Let $\Gamma$ be the silting object $T$. Then $L^{k}T$ is isomorphic to  $T[k]$ and $R^{k}T$  is isomorphic to $T[-k]$ for all $k\geq 0$.
	It is clear that $H^{-r}\Gamma \neq 0$ for all $r \geq 0$, because $d(a^r)=0$ and $a^r \notin \im d$. We claim that the image of $T$ in $\c_{\Gamma}$ is not isomorphic to the image of $T[4]$ in $\c_{\Gamma}$.  Otherwise, assume that $\pi (T)$ is isomorphic to $\pi (T[4])$. Then the following
	isomorphisms hold
	\begin{equation*}
		\begin{aligned}
		\Hom_{\c_{\Gamma}}(\pi (T),\pi (T)[1]) \simeq	\Hom_{\c_{\Gamma}}(\pi (T[4]),\pi (T)[1])\simeq\Hom_{\c_{\Gamma}}(T[3],T).
		\end{aligned}
	\end{equation*}
	By Theorem~\ref{thm4.3}, we have  the following
	isomorphisms
	\begin{equation*}
		\begin{aligned}
			\Hom_{\c_{\Gamma}}(\pi (T),\pi (T)[1]) \simeq\Hom_{\c_{\Gamma}}(T[3],T)
			\simeq \Hom_{\per (\Gamma)}(T[3],T)\simeq H^{-3}\Gamma.
		\end{aligned}
	\end{equation*}
	The left  term of these isomorphisms vanishes since $\pi (T)$ is a $3$-cluster tilting object in $\c_{\Gamma}$. Therefore, we 
	obtain a contradiction.
	 \begin{figure}[htpb]
		\centering
		\begin{tikzpicture}[scale=0.5]
			\draw[ultra thick](0,0) circle (3);
			\draw[ultra thick](0,0) circle (1.5);
			\draw[red,thick](0,-1.5)to(0,-3.0); 
			\draw   (0,-3)\rn  (0,1.5)\rn (0,-1.5)\rn;
			\draw[blue] (0.2,-2)node{$1$};
		\end{tikzpicture}
		\caption{}\label{fig12}
	\end{figure}
\end{example}  
%\appendix
\section{Proofs of Theorem~\ref{thm2.16} and  Theorem~\ref{mainthm}}
\subsection{Proof of Theorem~\ref{thm2.16}}

	Since there are many arrow pairs, which might seem confusing, we view each arrow pair as an edge and only display those edges associated to the two $d$-gons that contain $i$. Thus we get two regular graphs corresponding to the two 
	$d$-gons containing arc $i$. Assume $i=1$, then we have the following cases.
	\begin{enumerate}[\text{Case}~(1)]
		\item $|A|=0$. We just change the degree of arrows, and the assertion follows immediately.
		\item $|A|=1$. Suppose without loss of generality that $A=\{a_{21^+} \}$.  First, consider the case when the arc  $1$ is not  self-folded.  Denote by $S_1$ and $S_2$ the two regular graphs, and then glue $1^{+}$ and $1^{-}$ into $1$. Here, $S_1$ corresponds to the regular graph of the point set $M_1=\{1^+,2, \cdots , k-1,k\}$  and $S_2$ corresponds to regular graph of the point set $M_2=\{1^-,2', \cdots , l'-1,l'\}$ (see the left and right part of the following graph).
		\begin{align*}
			\begin{tikzpicture}[baseline=-3.5pt][scale=0.8]
				\node (1) at (0,0) {$1$};
				\node (2) at (-1,1) {$2$};
				\node (3) at (-3,1) {$3$};
				\node (s1) at (-4,0) {$\vdots$};
				\node (k-1) at  (-3,-1) {$k-1$};
				\node (k) at  (-1,-1) {$k$};
				\node (2') at (1,-1) {$2'$};
				\node (3') at (3,-1) {$3'$};
				\node (s2) at (4,0) {$\vdots$};
				\node (l'-1) at  (3,1) {$l'-1$};
				\node (l') at  (1,1) {$l'$};
				\draw [-] (1) to   (2) to (3) to  (k-1) to (k) to (1);
				\draw [-] (2) to   (k-1) to (1) to  (3) to (k) to (2);
				\draw [-] (1) to   (2') to (3') to  (l'-1) to (l') to (1);
				\draw [-] (l'-1) to (1)to  (3') to (l') to (2')to (l'-1);
			\end{tikzpicture}
		\end{align*}
		The new arrow pairs are $\{b_{2t},b_{t2} \}_{t\in P}$, where $P=(M_1\cup M_2)\verb|\|\{1^+,1^-,2\}$. To be more precisely,
		\begin{equation*}
			\begin{aligned}
				b_{2j}=[a_{21^+}a_{1^+j}]=a_{21^+}a_{1^+j} ,\ b_{j2}=[a_{j1^+}a_{21^+}^{-1}]=a_{j1^+}a_{21^+}^{-1},\ \forall j \in 	\{ 3,\cdots , k \},\\
				b_{2j}=[a_{21^+}a_{1^-j}]=a_{21^+}a_{1^-j} ,\ b_{j2}=[a_{j1^-}a_{21^+}^{-1}]=a_{j1^-}a_{21^+}^{-1},\ \forall j \in 	\{ 2',\cdots , l' \}.
			\end{aligned}	
		\end{equation*}
		\begin{align*}
			\begin{tikzpicture}[scale=0.9][baseline=-3.5pt]
				\node (2) at (-1,1) {$2$};
				\node (3) at (-3,1) {$3$};
				\node (s1) at (-4,0) {$\vdots$};
				\node (k-1) at  (-3,-1) {$k-1$};
				\node (k) at  (-1,-1) {$k$};
				\node (2') at (1,-1) {$2'$};
				\node (3') at (3,-1) {$3'$};
				\node (s2) at (4,0) {$\vdots$};
				\node (l'-1) at  (3,1) {$l'-1$};
				\node (l') at  (1,1) {$l'$};
				\draw [-, dotted, thick]  (2) to (3) ;
				\draw [-, dotted, thick]  (2) to (k) ;
				\draw [-, dotted, thick]  (2) to (k-1) ;
				\draw [-, dotted, thick]  (2) to (2') ;
				\draw [-, dotted, thick]    (2) to [out=30,in=150] (l'-1) ;
				\draw [-, dotted, thick]  (2) to (l') ;
				\draw [-, dotted, thick]  (2) to (3') ;
				\node (s) at (-2,1.3) {$\{b_{23},b_{32} \}$};
			\end{tikzpicture}
		\end{align*}
		
		The arrow pair $\{a_{21^+},a_{1^+2}\}$ is replaced by the arrow pair $\{a_{21^+}^*,a_{1^+2}^*=(a_{21^+}^*)^{op}\}$  in the opposite direction. 
		
		By the Definition~\ref{mutation}, we need to remove the following superfluous arrow pairs:
		\[
		\{a_{2j},a_{j2},b_{2,j},b_{j2}\}_{j\in \{3,\cdots , k \}}.
		\]
		Then we get the local graph (arrow pairs) in $\mu^{'}_{1}(Q,W)$ as below.
		\begin{align*}
			\begin{tikzpicture}[baseline=-3.5pt][scale=0.8]
				\node (1) at (0,0) {$1$};
				\node (2) at (-1,1) {$3$};
				\node (3) at (-3,1) {$4$};
				\node (s1) at (-4,0) {$\vdots$};
				\node (k-1) at  (-3,-1) {$k-1$};
				\node (k) at  (-1,-1) {$k$};
				\node (2') at (1,-1) {$2'$};
				\node (3') at (3,-1) {$3'$};
				\node (s2) at (4,0) {$\vdots$};
				\node (l'-1) at  (3,1) {$l'$};
				\node (l') at  (1,1) {$2$};
				\draw [-] (1) to   (2) to (3) to  (k-1) to (k) to (1);
				\draw [-] (2) to   (k-1) to (1) to  (3) to (k) to (2);
				\draw [-] (1) to   (2') to (3') to  (l'-1) to (l') to (1);
				\draw [-] (l'-1) to (1)to  (3') to (l') to (2')to (l'-1);
			\end{tikzpicture}
		\end{align*}
		By Step~4 of Definition~\ref{mutation}, we know $\mu^{'}_{1}(Q,W)$ is a quiver with superpotential associated to 
		$(S,M,\mu_1 (D))$: $3$-cycles in a $d$-gon in $(S,M,\mu_1 (D))$ are all provided by those in $\mu_{1}(W)$, while all the $2$-cycles in $\mu_{1}(W)$ are removed by the Step~4 of Definition~\ref{mutation}.
				
		If the arc $1$ is self-folded then the local graph is as follows. Denote by $S_1=S_2$ the regular graph determined by the point set $M=\{1^+,2, \cdots , k,1^-,2',\cdots ,l'\}$.  
		\begin{align*}
			\begin{tikzpicture}[baseline=-2.5pt,scale=0.6]
				\node (1^+) at (-3,0) {$1^+$};
				\node (2) at (-2,-2) {$2$};
				\node (k) at (2,-2) {$k$};
				\node (s1) at (0,3) {$\cdots$};
				\node (1^-) at  (3,0) {$1^-$};
				\node (2') at (2,2) {$2'$};
				\node (s2) at (0,-3) {$\cdots$};
				\node (l') at  (-2,2) {$l'$};
				\draw [-] (1^+) to   (l') to (2') to  (1^-) to (k) to (2)to (1^+);
				\draw [-] (2) to   (l') to (1^-) to  (2) ;
				\draw [-] (2') to   (k) to (1^+) to  (2') ;
				\draw [-] (l')to(k) ;
				\draw [-] (2)to(2') ;
				\draw [-] (1^+)to(1^-) ;
			\end{tikzpicture}
		\end{align*}
		For convenience, we denote $\alpha=a_{21^+},\varphi=a_{1^+ 1^-},\varphi^{op}=a_{1^+ 1^-}^{op}=a_{1^- 1^+}$. The new arrow pairs are  
		\[\{b_{2t},b_{t2} \}_{t\in P}\cup \{c_{2t},c_{t2} \}_{t\in P}\cup \{a_{21^+}a_{1^-2} ,a_{21^-}a_{21^+}^{-1}\}, \]  
		\[ \{\alpha \varphi, \varphi^{op} \alpha^{-1}\}\cup \{\alpha \varphi^{op},\varphi \alpha^{-1}\} \cup \{\alpha \varphi \alpha^{-1},\alpha \varphi^{op} \alpha^{-1} \},
		\] where $P=M\verb|\|\{1^+,1^-,2\}$. To be more precisely,		
		\begin{equation*}
			\begin{aligned}
				b_{2j}=[a_{21^+}a_{1^+ j}]=a_{21^+}a_{1^+j} ,\ \ b_{j2}=[a_{j1^+}a_{21^+}^{-1}]=a_{j1^+}a_{21^+}^{-1},\ \forall j \in 	P,\\
				c_{2j}=[a_{21^+}a_{1^- j}]=a_{21^+}a_{1^-j} ,\ \ c_{j2}=[a_{j1^-}a_{21^+}^{-1}]=a_{j1^-}a_{21^+}^{-1},\ \forall j \in 	P.
			\end{aligned}	
		\end{equation*}
		\begin{align}\label{5.1}
			\begin{tikzpicture}[baseline=-2.5pt,scale=0.7]
				\node (1^+) at (-3,0) {$1^+$};
				\node (2) at (-2,-2) {$2$};
				\node (k) at (2,-2) {$k$};
				\node (s1) at (0,3) {$\cdots$};
				\node (1^-) at  (3,0) {$1^-$};
				\node (2') at (2,2) {$2'$};
				\node (s2) at (0,-3) {$\cdots$};
				\node (l') at  (-2,2) {$l'$};
				\draw [->, dotted, thick] (2) to (1^+);
				\draw (1^+) to (l'); 
				\draw (1^+) to (2');
				\draw (1^+) to (k);
				\draw[blue] (-2.9,-1.5)node{$a_{21^+}$};
				\begin{scope}[shift={(9,0)}]
					\node (1^+) at (-3,0) {$1^+$};
					\node (2) at (-2,-2) {$2$};
					\node (k) at (2,-2) {$k$};
					\node (s1) at (0,3) {$\cdots$};
					\node (1^-) at  (3,0) {$1^-$};
					\node (2') at (2,2) {$2'$};
					\node (s2) at (0,-3) {$\cdots$};
					\node (l') at  (-2,2) {$l'$};
					\draw (2) to (l'); 
					\draw (2) to (2');
					\draw (2) to (k);
					\node (s) at  (-.9,1) {$\{b_{2l'},b_{l'2} \}$};
				\end{scope}	
			\end{tikzpicture}
		\end{align}		
		\begin{align}\label{5.2}
			\begin{tikzpicture}[baseline=-2.5pt,scale=0.7]
				\node (1^+) at (-3,0) {$1^+$};
				\node (2) at (-2,-2) {$2$};
				\node (k) at (2,-2) {$k$};
				\node (s1) at (0,3) {$\cdots$};
				\node (1^-) at  (3,0) {$1^-$};
				\node (2') at (2,2) {$2'$};
				\node (s2) at (0,-3) {$\cdots$};
				\node (l') at  (-2,2) {$l'$};
				\draw [->, dotted, thick] (2) to (1^+);
				\draw (1^-) to (l'); 
				\draw (1^-) to (2');
				\draw (1^-) to (k);
				\draw (1^-) to (2);
				\draw[blue] (-2.9,-1.5)node{$a_{21^+}$};
				\begin{scope}[shift={(9,0)}]
					\node (1^+) at (-3,0) {$1^+$};
					\node (2) at (-2,-2) {$2$};
					\node (k) at (2,-2) {$k$};
					\node (s1) at (0,3) {$\cdots$};
					\node (1^-) at  (3,0) {$1^-$};
					\node (2') at (2,2) {$2'$};
					\node (s2) at (0,-3) {$\cdots$};
					\node (l') at  (-2,2) {$l'$};
					\draw (2) to (l'); 
					\draw (2) to (2');
					\draw (2) to (k);
					\draw [-,  thick]  (2) to[out=-90,in=-30] (-3,-3) to[out=150,in=180](2) ;
					\node (s) at  (-.9,1) {$\{c_{2l'},c_{l'2} \}$};
					\node (t) at  (-3,-3.5) {$\{a_{21^+}a_{1^-2} ,a_{21^-}a_{21^+}^{-1}\}$};  
				\end{scope}	
			\end{tikzpicture}
		\end{align}		
			
		(\ref{5.1}) and (\ref{5.2}) are illustrations of the generation process of the arrow pairs  $\{b_{2t},b_{t2} \}_{t\in P} \cup \{c_{2t},c_{t2} \}_{t\in P}$,\ $ \{a_{21^+}a_{1^-2} ,a_{21^-}a_{21^+}^{-1}\}$. Notice that $\{a_{21^+}a_{1^-2} ,a_{21^-}a_{21^+}^{-1}\}$ are loops attached to $2$.
		
		Next, we give an illustration (see (\ref{5.3})) of the generation process of the arrow pairs 	$\{\alpha \varphi, \varphi^{op} \alpha^{-1}\}\cup \{\alpha \varphi^{op},\varphi \alpha^{-1}\}\cup \{\alpha \varphi \alpha^{-1},\alpha \varphi^{op} \alpha^{-1} \}$, where
		\begin{itemize}
			\item $\{\alpha \varphi, \varphi^{op} \alpha^{-1}\}\cup \{\alpha \varphi^{op},\varphi \alpha^{-1}\}$  are given by $\varphi, \varphi^{op}$ in Definition~\ref{def:mut QsP}~(3); and 
			\item $\{\alpha \varphi \alpha^{-1},\alpha \varphi^{op} \alpha^{-1} \}$ are given by $\varphi, \varphi^{op}$ in Definition~\ref{def:mut QsP}~(4).
		\end{itemize}
		\begin{align}\label{5.3}
			\begin{tikzpicture}[baseline=-2.5pt,scale=0.6]
				\node (1^+) at (-3,0) {$1^+$};
				\node (2) at (-2,-2) {$2$};
				\node (k) at (2,-2) {$k$};
				\node (s1) at (0,3) {$\cdots$};
				\node (1^-) at  (3,0) {$1^-$};
				\node (2') at (2,2) {$2'$};
				\node (s2) at (0,-3) {$\cdots$};
				\node (l') at  (-2,2) {$l'$};
				\draw [->, dotted, thick] (2) to (1^+);
				\draw (1^-) to (1^+); 
				\draw[blue] (-2.9,-1.5)node{$a_{21^+}$};
				\draw[blue] (0,-0.5)node{$\varphi ^{op}$};
				\draw[blue](0,0.5)node{$\varphi $};
				\begin{scope}[shift={(9,0)}]
					\node (1^+) at (-3,0) {$1^+$};
					\node (2) at (-2,-2) {$2$};
					\node (k) at (2,-2) {$k$};
					\node (s1) at (0,3) {$\cdots$};
					\node (1^-) at  (3,0) {$1^-$};
					\node (2') at (2,2) {$2'$};
					\node (s2) at (0,-3) {$\cdots$};
					\node (l') at  (-2,2) {$l'$};
					\draw [ thick]  (2) to[out=10,in=-170] (1^-) ;
					\draw [ thick]  (2) to[out=20,in=-180] (1^-) ;
					\draw [-,  thick]  (2) to[out=-90,in=-30] (-3,-3) to[out=150,in=180](2) ;
					\draw[blue] (-3.3,-1.7)node{$\alpha \varphi \alpha^{-1}$};
					\draw[blue] (-2.1,-3.5)node{$\alpha \varphi^{op} \alpha^{-1}$};
					\draw[blue] (-1,-1)node{$\alpha \varphi $};
					\draw[blue] (-.5,-2.1)node{$\alpha \varphi^{op} $};
					\draw[blue] (1,0.2)node{$ \varphi^{op}\alpha^{-1} $};
					\draw[blue] (-.5,-2.1)node{$\alpha \varphi^{op} $};
					\draw[blue] (1.3,-1)node{$ \varphi \alpha^{-1} $};
				\end{scope}	
			\end{tikzpicture}
		\end{align}	
		
		The arrow pair $\{a_{21^+},a_{1^+2}\}$ is replaced by the arrow pair $\{a_{21^+}^*,a_{1^+2}^*=(a_{21^+}^*)^{op}\}$ in the opposite direction.
		
		By the Definition~\ref{mutation}, we have to remove the following superfluous arrow pairs:
		\[
		\{a_{2j},a_{j2},b_{2j},b_{j2},a_{21^-},a_{1^-2},\alpha \varphi,\varphi^{op}\alpha^{-1}\}_{j\in P }\cup \{\alpha \varphi \alpha^{-1},\alpha \varphi^{op} \alpha^{-1}, a_{21^+}a_{1^-2} ,a_{21^-}a_{21^+}^{-1} \}. 
		\]
		We can view $\{a_{21^+}^*,a_{1^+2}^*\}$ as the arrow pair between $2$ and $1^-$, and $\{\alpha \varphi^{op},\varphi \alpha^{-1}\}$ as the arrow pair between $2$ and $1^+$. Then, we have the following local graph of $\mu^{'}_{1}(Q,W)$.		
		\begin{align*}
			\begin{tikzpicture}[baseline=-2.5pt,scale=0.6]
				\node (1^+) at (-3,0) {$1^+$};
				\node (2) at (-2,-2) {$3$};
				\node (k) at (2,-2) {$2$};
				\node (s1) at (0,3) {$\cdots$};
				\node (1^-) at  (3,0) {$1^-$};
				\node (2') at (2,2) {$2'$};
				\node (s2) at (0,-3) {$\cdots$};
				\node (l') at  (-2,2) {$l'$};
				\draw [-] (1^+) to   (l') to (2') to  (1^-) to (k) to (2)to (1^+);
				\draw [-] (2) to   (l') to (1^-) to  (2) ;
				\draw [-] (2') to   (k) to (1^+) to  (2') ;
				\draw [-] (l')to(k) ;
				\draw [-] (2)to(2') ;
				\draw [-] (1^+)to(1^-) ;
				\node (t) at (0,-2.3) {$\{c_{23},c_{32} \}$};
			\end{tikzpicture}
		\end{align*}		   
		where vertices between $1^+$ and $1^-$ are $\{3,4,\cdots ,k,2 \}$ and $\{2',3',\cdots ,l' \}$. By the Step~4 in Definition~\ref{mutation}, we know that $\mu^{'}_{1}(Q,W)$ is a quiver with superpotential associated to 
		$(S,M,\mu_1 (D))$.	
		
		\item $|A|=2.$ Suppose $A=\{a_{21^+},a_{2'1^-} \}$. Then the discussion is similar to Case (2). For convenience, we just give the new arrow pairs and the superfluous arrow pairs.
		
		If the arc $1$ is not self-folded then the local graph is as follows. Let $S_1$ and $S_2$ be the regular graphs determined by point sets $M_1=\{1^+,2, \cdots , k-1,k\}$ and $M_2=\{1^-,2', \cdots , l'-1,l'\}$ respectively, see the left and right part of the following graph. 
		\begin{align*}
			\begin{tikzpicture}[baseline=-3.5pt][scale=0.8]
				\node (1) at (0,0) {$1$};
				\node (2) at (-1,1) {$2$};
				\node (3) at (-3,1) {$3$};
				\node (s1) at (-4,0) {$\vdots$};
				\node (k-1) at  (-3,-1) {$k-1$};
				\node (k) at  (-1,-1) {$k$};
				\node (2') at (1,-1) {$2'$};
				\node (3') at (3,-1) {$3'$};
				\node (s2) at (4,0) {$\vdots$};
				\node (l'-1) at  (3,1) {$l'-1$};
				\node (l') at  (1,1) {$l'$};
				\draw [-] (1) to   (2) to (3) to  (k-1) to (k) to (1);
				\draw [-] (2) to   (k-1) to (1) to  (3) to (k) to (2);
				\draw [-] (1) to   (2') to (3') to  (l'-1) to (l') to (1);
				\draw [-] (l'-1) to (1)to  (3') to (l') to (2')to (l'-1);
			\end{tikzpicture}
		\end{align*}		
		The new arrow pairs are $\{b_{2t},b_{t2} ,b_{2't},b_{t2'}\}_{t\in P}$, where $P=(M_1\cup M_2)\verb|\|\{1^+,1^-,2,2'\}$. To be more precisely,
		\begin{equation*}
			\begin{aligned}
				b_{2j}=[a_{21^+}a_{1^+j}]=a_{21^+}a_{1^+j} ,\ b_{j2}=[a_{j1^+}a_{21^+}^{-1}]=a_{j1^+}a_{21^+}^{-1},\ \forall j \in 	\{ 3,\cdots , k \},\\
				b_{2j}=[a_{21^+}a_{1^-j}]=a_{21^+}a_{1^-j} ,\ b_{j2}=[a_{j1^-}a_{21^+}^{-1}]=a_{j1^-}a_{21^+}^{-1},\ \forall j \in 	\{ 3',\cdots , l' \},\\
				b_{2'j}=[a_{2'1^-}a_{1^-j}]=a_{2'1^-}a_{1^-j} ,\ b_{j2'}=[a_{j1^-}a_{2'1^-}^{-1}]=a_{j1^-}a_{2'1^-}^{-1},\ \forall j \in 	\{ 3',\cdots , l' \},\\
				b_{2'j}=[a_{2'1^-}a_{1^+j}]=a_{2'1^-}a_{1^+j} ,\ b_{j2'}=[a_{j1^+}a_{2'1^-}^{-1}]=a_{j1^+}a_{2'1^-}^{-1},\ \forall j \in 	\{ 3,\cdots , k \}.
			\end{aligned}	
		\end{equation*}
		
		The arrow pair $\{a_{21^+},a_{1^+2}\}$ and $\{a_{2'1^-},a_{1^-2'}\}$ are replaced by arrow pairs $\{a_{21^+}^*,a_{1^+2}^*=(a_{21^+}^*)^{op}\}$ and $\{a_{2'1^-}^*,a_{1^-2'}^*=(a_{2'1^-}^*)^{op}\}$ in opposite directions, respectively. We can view $\{a_{21^+}^*,a_{1^+2}^*\}$ as the arrow pair between $2$ and $1^-$ in $\mu_{1}^{'}(Q)$, and $\{a_{2'1^-}^*,a_{1^-2'}^*\}$ as the arrow pair between $2'$ and $1^+$ in $\mu_{1}^{'}(Q)$.
		
		The superfluous arrow pairs:
		\[
		\{b_{2j},b_{j2},a_{2j},a_{j2} \}_{j\in \{3,\cdots,k\}} \text{ and } \{b_{2'j},b_{j2'},a_{2'j},a_{j2'} \}_{j\in \{3',\cdots,l'\}}.
		\]	
		The remaining discussion is similar to Case~(2) and we omit it.

		If the arc $1$ is self-folded then the local graph is as follows. Denote by $S_1=S_2$ the regular graph determined by the point set $M=\{1^+,1^-,2, \cdots , k,2',\cdots ,l'\}$.  For convenience, let $P=M\verb|\| \{1^+,1^-,2,2' \}$, $\alpha=a_{21^+},\beta=a_{2'1^-},\varphi=a_{1^+ 1^-},\varphi^{op}=a_{1^+ 1^-}^{op}=a_{1^- 1^+}$.
		\begin{align*}
			\begin{tikzpicture}[baseline=-2.5pt,scale=0.6]
				\node (1^+) at (-3,0) {$1^+$};
				\node (2) at (-2,-2) {$2$};
				\node (k) at (2,-2) {$k$};
				\node (s1) at (0,3) {$\cdots$};
				\node (1^-) at  (3,0) {$1^-$};
				\node (2') at (2,2) {$2'$};
				\node (s2) at (0,-3) {$\cdots$};
				\node (l') at  (-2,2) {$l'$};
				\draw [-] (1^+) to   (l') to (2') to  (1^-) to (k) to (2)to (1^+);
				\draw [-] (2) to   (l') to (1^-) to  (2) ;
				\draw [-] (2') to   (k) to (1^+) to  (2') ;
				\draw [-] (l')to(k) ;
				\draw [-] (2)to(2') ;
				\draw [-] (1^+)to(1^-) ;
			\end{tikzpicture}
		\end{align*}
		
		The new arrow pairs are 
		\[\{b_{2t},b_{t2} ,c_{2t},c_{t2},b_{2't},b_{t2'} ,c_{2't},c_{t2'}\}_{t\in P},\]
		\[ \{ a_{21^+}a_{1^+2'},a_{2'1^+}a_{21^+}^{-1},a_{2'1^-}a_{1^-2},a_{21^-}a_{2'1^-}^{-1}\},\]
		\[
		\{a_{21^+}a_{1^-2},a_{21^-}a_{21^+}^{-1},a_{2'1^-}a_{1^+2'},a_{2'1^+}a_{2'1^-}^{-1}  \},
		\] 
		\[
		\{ \alpha\varphi , \varphi^{op}\alpha^{-1},\alpha \varphi^{op},\varphi \alpha^{-1}\}, \{ \beta\varphi^{op} , \varphi\beta^{-1},\beta \varphi,\varphi^{op} \beta^{-1}\},	 
		\]
		\[
		\{\alpha \varphi \alpha^{-1},\alpha \varphi^{op} \alpha^{-1},\beta \varphi \beta^{-1},\beta \varphi^{op} \beta^{-1}	   
		\}
		\]
		\[
		\{\alpha \varphi \beta^{-1},\beta \varphi^{op} \alpha^{-1},\beta \varphi \alpha^{-1},\alpha \varphi^{op} \beta^{-1}	
		\},	 
		\]
		where 
		\begin{equation*}
			\begin{aligned}
				b_{2j}=[a_{21^+}a_{1^+j}]=a_{21^+}a_{1^+j} ,\ b_{j2}=[a_{j1^+}a_{21^+}^{-1}]=a_{j1^+}a_{21^+}^{-1},\ \forall j \in 	P,\\
				b_{2'j}=[a_{2'1^-}a_{1^-j}]=a_{2'1^-}a_{1^-j} ,\ b_{j2'}=[a_{j1^-}a_{2'1^+}^{-1}]=a_{j1^-}a_{2'1^-}^{-1},\ \forall j \in 	P,\\
				c_{2j}=[a_{21^+}a_{1^-j}]=a_{21^+}a_{1^-j} ,\ c_{j2}=[a_{j1^-}a_{21^+}^{-1}]=a_{j1^-}a_{21^+}^{-1},\ \forall j \in 	P,\\
				c_{2'j}=[a_{2'1^-}a_{1^+j}]=a_{2'1^-}a_{1^+j} ,\ c_{j2'}=[a_{j1^+}a_{2'1^-}^{-1}]=a_{j1^+}a_{2'1^-}^{-1},\ \forall j \in 	P.
			\end{aligned}	
		\end{equation*}
		
		The arrow pair $\{a_{21^+},a_{1^+2}\}$ and $\{a_{2'1^-},a_{1^-2'}\}$ are replaced by arrow pairs $\{a_{21^+}^*,a_{1^+2}^*=(a_{21^+}^*)^{op}\}$ and $\{a_{2'1^-}^*,a_{1^-2'}^*=(a_{2'1^-}^*)^{op}\}$ in opposite directions, respectively. 
		%$\{a_{2'1^-},a_{1^-2'}\}$ is replaced by the arrow pair $\{a_{2'1^-}^*,a_{1^-2'}^*=(a_{2'1^-}^*)^{op}\}$  in the opposite direction.  
		We can view $\{a_{21^+}^*,a_{1^+2}^*\}$ as the arrow pair between $2$ and $1^-$ in $\mu_{1}^{'}(Q)$, and $\{a_{2'1^-}^*,a_{1^-2'}^*\}$ the arrow pair between $2'$ and $1^+$ in $\mu_{1}^{'}(Q)$.
		
		The superfluous arrow pairs (we omit recurring ones):
		\[
		\{b_{2j},b_{j2},a_{2j},a_{j2}, b_{2'j},b_{j2'},a_{2'j},a_{j2'}\}_{j\in P},
		\]
		\[
		\{\alpha\varphi , \varphi^{op}\alpha^{-1},a_{21^-},a_{1^-2} \}, \{\beta\varphi^{op} , \varphi\beta^{-1},a_{2'1^+},a_{1^+2'} \}.
		\]	
		\[
		\{a_{21^+}a_{1^+2'},a_{22'}\},
		\{a_{2'1^-}a_{1^-2},a_{2'2}\}
		\]
		\[
		\{\alpha\varphi \beta^{-1}, \beta\varphi^{op} \alpha^{-1},
		a_{21^-}a_{2'1^-}^{-1},a_{2'1^+}a_{21^+}^{-1}
		\}
		\]
		\[
		\{
		\alpha \varphi \alpha^{-1},\alpha \varphi^{op} \alpha^{-1},a_{21^+}a_{1^-2},a_{21^-}a_{21^+}^{-1}
		\},\]
		\[\{\beta \varphi \beta^{-1},\beta \varphi^{op} \beta^{-1},a_{2'1^-}a_{1^+2'},a_{2'1^+}a_{2'1^-}^{-1}  \}.
		\]
		
		We can view $\{\alpha \varphi^{op},\varphi\alpha^{-1} \}$ as the arrow pair between $2,1^+$ in $\mu_{1}^{'}(Q)$, $\{\beta \varphi, \varphi^{op}\beta^{-1} \}$ as the arrow pair between $2',1^-$ in $\mu_{1}^{'}(Q)$, and $\{\alpha \varphi^{op} \beta^{-1}, \beta \varphi \alpha^{-1} \}$ as the arrow pair between $2,2'$ in $\mu_{1}^{'}(Q)$.

      The remaining discussion is similar to Case~(2) and we omit it.
	\end{enumerate}
\begin{remark}
    To complete the proof of Theorem~\ref{thm2.16}, we still have to check that $\{\mu^{'}_{i}(W),\mu^{'}_{i}(W)\}=0$ ($d^2=0$). However, such a condition is implied during the proof of Theorem~\ref{mainthm} below.
\end{remark}
\subsection{Proof of Theorem~\ref{mainthm}}
	Assume $i=1$ without loss generality.
	Since there are many arrow pairs, which might seem confusing, we view each arrow pair as an edge and only display those edges  associated to the two $d$-gons that contain $1$. We get two regular graphs corresponding to the two 
	$d$-gons containing arc $1$.
\subsubsection{Non-self-folded case}\

	First, consider the case when arc  $1$ is not  self-folded.  We define two regular graphs $S_1$ and $S_2$, and then glue $1^{+}$ and $1^{-}$ into $1$.
	Denote
	\begin{align*}
		A=\{\alpha \in Q_1 |\alpha:j \rga 1 ,  |\alpha|=0 \}.
	\end{align*}
	Thus $|A| \in \{0, 1 ,2\}$ and we have the following cases.
	\begin{enumerate}[\text{Case}~(1)]
		\item $|A|=0$. Then the assertion follows directly by Definition~\ref{def:mut QsP} and Definition~\ref{mutation}. 

         \item $|A|=1$.	Suppose $A=\{a_{21} \}$ and local graph is as follows. Let $S_1$ be the regular graph determined by the  point set $M_1=\{1,2, \cdots , k-1,k\}$, and $S_2$ the regular graph determined by $M_2=\{1,2', \cdots , l'-1,l'\}$.  
	\begin{align*}
		\begin{tikzpicture}[baseline=-3.5pt,scale=0.8]
			\node (1) at (0,0) {$1$};
			\node (2) at (-1,1) {$2$};
			\node (3) at (-3,1) {$3$};
			\node (s1) at (-4,0) {$\vdots$};
			\node (k-1) at  (-3,-1) {$k-1$};
			\node (k) at  (-1,-1) {$k$};
			\node (2') at (1,-1) {$2'$};
			\node (3') at (3,-1) {$3'$};
			\node (s2) at (4,0) {$\vdots$};
			\node (l'-1) at  (3,1) {$l'-1$};
			\node (l') at  (1,1) {$l'$};
			\draw [-] (1) to   (2) to (3) to  (k-1) to (k) to (1);
			\draw [-] (2) to   (k-1) to (1) to  (3) to (k) to (2);
			\draw [-] (1) to   (2') to (3') to  (l'-1) to (l') to (1);
			\draw [-] (l'-1) to (1)to  (3') to (l') to (2')to (l'-1);
		\end{tikzpicture}
	\end{align*}
	By the definition of $\Gamma_{1}$, the new arrow pairs are $\{b_{2t},b_{t2} \}_{t\in P}$, where $P=(M_1\cup M_2)\verb|\|\{1,2\}$. For convenience, we keep the notations as in Definition~\ref{def:mut QsP}.
	\begin{equation*}
		b_{2j}=[a_{21}a_{1j}]=a_{21}a_{1j} ,\ b_{j2}=[a_{j1}a_{21}^{-1}]=a_{j1}a_{21}^{-1},\ \forall j \in 	P.
	\end{equation*}
	\begin{align*}
		\begin{tikzpicture}[baseline=-3.5pt,scale=0.9]
			\node (2) at (-1,1) {$2$};
			\node (3) at (-3,1) {$3$};
			\node (s1) at (-4,0) {$\vdots$};
			\node (k-1) at  (-3,-1) {$k-1$};
			\node (k) at  (-1,-1) {$k$};
			\node (2') at (1,-1) {$2'$};
			\node (3') at (3,-1) {$3'$};
			\node (s2) at (4,0) {$\vdots$};
			\node (l'-1) at  (3,1) {$l'-1$};
			\node (l') at  (1,1) {$l'$};
			\draw [-, dotted, thick]  (2) to (3) ;
			\draw [-, dotted, thick]  (2) to (k) ;
			\draw [-, dotted, thick]  (2) to (k-1) ;
			\draw [-, dotted, thick]  (2) to (2') ;
			\draw [-, dotted, thick]    (2) to [out=30,in=150] (l'-1) ;
			\draw [-, dotted, thick]  (2) to (l') ;
			\draw [-, dotted, thick]  (2) to (3') ;
			\node (s) at (-2,1.3) {$\{b_{23},b_{32} \}$};
		\end{tikzpicture}
	\end{align*}	
	
	The arrow pair $\{a_{21},a_{12}\}$ is replaced by the arrow pair $\{a_{21}^*,a_{12}^*=(a_{21}^*)^{op}\}$ in the opposite direction.
	
	Since our aim is to obtain $\Gamma_2$ from $\Gamma_1$ by canceling certain arrow pairs via Lemma~\ref{lemma3.1}, we simplify the cancellation process by treating the signs of the path components in the differential in a fuzzy manner. Specifically, we denote  $d(\alpha)=\pm p_1  \cdots \pm p_m$ as  $d(\alpha)= p_1  +\cdots + p_m$, where the signs are omitted during computation.
	After completing the cancellation process, we can restore the actual signs (positive or negative) as needed.  
	All discussions henceforth will assume this simplified approach.
	
	Now we can compute the differential in $\Gamma_{1}$. Since
	\begin{equation*}
		d_1(a_{2k})=\dec (\red (d(a_{2k})))=\dec (\red (a_{21}a_{1k}))=a_{21}a_{1k}=b_{2k},
	\end{equation*}	
	we can cancel $\{a_{2k},b_{2k}\}$.
	
	Since
	\begin{equation*}
		\begin{aligned}
			d_1(a_{2,k-1})&=\dec (\red (d(a_{2,k-1})))\\ &=a_{21}a_{1,k-1}+a_{2k}a_{k,k-1}\\
			&=a_{21}a_{1,k-1}\\  &=b_{2,k-1},
		\end{aligned}	
	\end{equation*}	
	we cancel $\{a_{2,k-1},b_{2,k-1}\}$. 
	
	Similarly, we have
	\begin{equation*}
		\begin{aligned}
			d_1(a_{2,k-2})&=b_{2,k-2}\\ & \vdots \\
			d_1(a_{23})&=b_{23}.
		\end{aligned}	
	\end{equation*}
	So, we cancel $\{a_{2,k-2},b_{2,k-2}\},\cdots ,\{a_{23},b_{23}\}$. 
	
	For $\{b_{32},a_{32}\}$, we have
	\begin{equation*}
		\begin{aligned}
			d_1(b_{32})&=d_1(a_{31}a_{21}^{-1}) \\
			&=d_1(a_{31})a_{21}^{-1}+a_{31}a_{21}^*+\dec (d(a_{31})/ a_{21})\\
			&=\dec (\red(d(a_{31})) )a_{21}^{-1}+a_{31}a_{21}^*+a_{32} \\
			&=a_{31}a_{21}^*+a_{32}.
		\end{aligned}	
	\end{equation*}	
	So, $\{b_{32},a_{32}\}$ is canceled. Note that $a_{32}$ may also appears in the differential of other arrows, and one has to replace them by $a_{31}a_{21}^*$ after the cancellation.
	
	For $\{b_{42},a_{42}\}$, we have
	\begin{equation*}
		\begin{aligned}
			d_1(b_{42})&=d_1(a_{41}a_{21}^{-1}) \\
			&=d_1(a_{41})a_{21}^{-1}+a_{41}a_{21}^*+\dec (d(a_{41})/ a_{21})\\
			&=\dec (\red(d(a_{41})) )a_{21}^{-1}+a_{41}a_{21}^*+a_{42} \\
			&= a_{43}a_{31}a_{21}^{-1}  +a_{41}a_{21}^*+a_{42} \\
			&= a_{43}b_{32}+a_{41}a_{21}^*+a_{42} \\
			&=a_{41}a_{21}^*+a_{42}.
		\end{aligned}	
	\end{equation*}	
	So we cancel $\{b_{42},a_{42}\}$. Similar as before, all $a_{42}$ appearing in the differential of other arrows are replaced by $a_{41}a_{41}^*$ (after cancellation). We orderly cancel $\{b_{32},a_{32}\},\{b_{42},a_{42}\},\cdots ,\{b_{k-1,2},a_{k-1,2} \},\{b_{k,2},a_{k,2} \}$ and get relations $a_{j2}=\pm a_{j1}a_{21}^*, \ j\in\{3,4,\cdots , k-1,k \}$. 
	
	Above procedure cancels all the arrows that not in $\Gamma_{2}$. After which, we have the following local graph.
	\begin{align*}
		\begin{tikzpicture}[baseline=-3.5pt,scale=0.8]
			\node (1) at (0,0) {$1$};
			\node (2) at (-1,1) {$3$};
			\node (3) at (-3,1) {$4$};
			\node (s1) at (-4,0) {$\vdots$};
			\node (k-1) at  (-3,-1) {$k-1$};
			\node (k) at  (-1,-1) {$k$};
			\node (2') at (1,-1) {$2'$};
			\node (3') at (3,-1) {$3'$};
			\node (s2) at (4,0) {$\vdots$};
			\node (l'-1) at  (3,1) {$l'$};
			\node (l') at  (1,1) {$2$};
			\draw [-] (1) to   (2) to (3) to  (k-1) to (k) to (1);
			\draw [-] (2) to   (k-1) to (1) to  (3) to (k) to (2);
			\draw [-] (1) to   (2') to (3') to  (l'-1) to (l') to (1);
			\draw [-] (l'-1) to (1)to  (3') to (l') to (2')to (l'-1);
		\end{tikzpicture}
	\end{align*}
	
	Now, the arrow pair between $1$ and $2$ is $a_{12}^*:2\rga 1$, $a_{21}^*:1\rga 2$, and the arrow pair between $2$ and $s$ is $\{b_{2s}$, $b_{s2}\}$,  for all $s\in \{2',3',\cdots ,l' \}$.  
	The arrow pairs between other points are unchanged. As for differential, we have
	\begin{equation*}
		\begin{aligned}
			d_1(b_{2'2})&=d_1(a_{2'1}a_{21}^{-1})\\ &=d_1(a_{2'1})a_{21}^{-1}+a_{2'1}a_{21}^*+\dec (d(a_{2'1})/ a_{21}) \\ &=a_{2'1}a_{21}^* \ ,
		\end{aligned}
	\end{equation*}
	and 
	\begin{equation*}
		\begin{aligned}
			d_1(b_{3'2})&=d_1(a_{3'1})a_{21}^{-1}+a_{a_{3'1}}a_{21}^*+\dec (d(a_{3'1})/ a_{21})\\&=a_{3'2'}a_{2'1}a_{21}^{-1}+a_{a_{3'1}}a_{21}^*\\&=a_{3'2'}b_{2'2}+a_{3'1}a_{21}^*.
		\end{aligned}
	\end{equation*}
	A similar discussion gives the following differentials
	\begin{equation*}
		\begin{aligned}
			d_1(b_{2'2})&=a_{2'1}a_{21}^*,\\
			d_1(b_{3'2})&=a_{3'2'}b_{2'2}+a_{3'1}a_{21}^*,\\
			&\vdots \\
			d_1(b_{l'2})&=a_{l',l'-1}b_{l'-1,2}+a_{l',l'-2}b_{l'-2,2}+\cdots +a_{l'2'}b_{2'2}+a_{3'1}a_{21}^*.
		\end{aligned}
	\end{equation*}	
	Now, let us calculate $d_1(b_{2,l'}),d_1(b_{2,l'-1}),\cdots ,d_1(b_{22'})$ :
	
	\begin{equation*}
		\begin{aligned}
			d_1(b_{2,l'})&=d_1(a_{21}a_{1,l'})=a_{21} \dec (\red (d(a_{1,l'})))=0 \\
			d_1(b_{2,l'-1})&=d_1(a_{21}a_{1,l'-1})=a_{21} \dec (\red (d(a_{1,l'-1}))) \\
			&=a_{21}d(a_{1,l'-1}) \\
			&=b_{2,l'}a_{l',l'-1}\\
			&\vdots \\
			d_1(b_{2,2'})&=  b_{2,l'}a_{l',2}+b_{2,l'-1}a_{l'-1,2}+\cdots +b_{2,3'}a_{3',2}.
		\end{aligned}
	\end{equation*}	
	None of the calculations above involve the relations that we derived earlier. As for $a_{12}^*$ and $a_{21}^*$, we have
	\begin{equation*}
		\begin{aligned}
			d_1(a_{21}^*)&=0 ,\\ 
			d_1(a_{12}^*)&= a_{21}(a_{13}a_{31}+\cdots +a_{1k}a_{k1}+a_{12'}a_{2'1}+\cdots +a_{1,l'}a_{l',1})\\
			&=b_{23}a_{31}+\cdots +b_{2k}a_{k1}+b_{22'}a_{2'1}+\cdots +b_{2,l'}a_{l',1}\\
			&=b_{2l'}a_{l'1}+\cdots +b_{2,2'}a_{2',1}.
		\end{aligned}
	\end{equation*}
	
	Similarly, differential of the other arrows in $\Gamma_{1}$ do not involve relations $a_{j2}=\pm a_{j1}a_{21}^*, \ j\in\{3,4,\cdots , k-1,k \}$. Thus, we get a dg quiver algebra $\Gamma_{1}'$.
	
    Since differentials involve no relations, we have the following quasi-isomorphism:
	$$\Gamma_1 \rga \Gamma_{1}'=\Gamma_{2}.$$
		
    \item $|A|=2$. Suppose $A=\{a_{21},a_{2'1} \}$ and the local graph is as follows. Let $S_1$ and $S_2$ be the regular graphs determined by point sets $M_1=\{1,2, \cdots , k-1,k\}$ and $M_2=\{1,2', \cdots , l'-1,l'\}$, respectively. 
	\begin{align*}
		\begin{tikzpicture}[baseline=-3.5pt,scale=0.8]
			\node (1) at (0,0) {$1$};
			\node (2) at (-1,1) {$2$};
			\node (3) at (-3,1) {$3$};
			\node (s1) at (-4,0) {$\vdots$};
			\node (k-1) at  (-3,-1) {$k-1$};
			\node (k) at  (-1,-1) {$k$};
			\node (2') at (1,-1) {$2'$};
			\node (3') at (3,-1) {$3'$};
			\node (s2) at (4,0) {$\vdots$};
			\node (l'-1) at  (3,1) {$l'-1$};
			\node (l') at  (1,1) {$l'$};
			\draw [-] (1) to   (2) to (3) to  (k-1) to (k) to (1);
			\draw [-] (2) to   (k-1) to (1) to  (3) to (k) to (2);
			\draw [-] (1) to   (2') to (3') to  (l'-1) to (l') to (1);
			\draw [-] (l'-1) to (1)to  (3') to (l') to (2')to (l'-1);
		\end{tikzpicture}
	\end{align*}
	By the definition of $\Gamma_{1}$, the new arrow pairs are $\{b_{2t},b_{t2} ,b_{2't},b_{t2'}\}_{t\in P}$, where $P=(M_1\cup M_2)\verb|\|\{1,2,2'\}$. Similar as Case~(2), we have% we maintain the notations as defined in Definition~\ref{def:mut QsP}:
	\begin{equation*}
		\begin{aligned}
			b_{2j}=[a_{21}a_{1j}]=a_{21}a_{1j} ,\ b_{j2}=[a_{j1}a_{21}^{-1}]=a_{j1}a_{21}^{-1},  \\
			b_{2'j}=[a_{2'1}a_{1j}]=a_{2'1}a_{1j} ,\ b_{j2'}=[a_{j1}a_{2'1}^{-1}]=a_{j1}a_{2'1}^{-1}, \ \forall j \in 	P.
		\end{aligned}
	\end{equation*}	
	\begin{align*}
		\begin{tikzpicture}[baseline=-3.5pt,scale=1]
			\node (1) at (0,0) {$1$};
			\node (2) at (-1,1) {$2$};
			\node (3) at (-3,1) {$3$};
			\node (s1) at (-4,0) {$\vdots$};
			\node (k-1) at  (-3,-1) {$k-1$};
			\node (k) at  (-1,-1) {$k$};
			\node (2') at (1,-1) {$2'$};
			\node (3') at (3,-1) {$3'$};
			\node (s2) at (4,0) {$\vdots$};
			\node (l'-1) at  (3,1) {$l'-1$};
			\node (l') at  (1,1) {$l'$};
			\draw [-, dotted, thick]  (2) to (3) ;
			\draw [-, dotted, thick]  (2) to (k) ;
			\draw [-, dotted, thick]  (2) to (k-1) ;
			\draw [-, dotted, thick]    (2) to [out=30,in=150] (l'-1) ;
			\draw [-, dotted, thick]  (2) to (l') ;
			\draw [-, dotted, thick]  (2) to (3') ;
			\node (s) at (-2,1.3) {$\{b_{23},b_{32} \}$};
		\end{tikzpicture}
	\end{align*}	
	\begin{align*}
		\begin{tikzpicture}[baseline=-3.5pt,scale=1]
			\node (1) at (0,0) {$1$};
			\node (2') at (-1,1) {$2$};
			\node (3) at (-3,1) {$3$};
			\node (s1) at (-4,0) {$\vdots$};
			\node (k-1) at  (-3,-1) {$k-1$};
			\node (k) at  (-1,-1) {$k$};
			\node (2) at (1,-1) {$2'$};
			\node (3') at (3,-1) {$3'$};
			\node (s2) at (4,0) {$\vdots$};
			\node (l'-1) at  (3,1) {$l'-1$};
			\node (l') at  (1,1) {$l'$};
			\draw [-, dotted, thick]  (2) to (3) ;
			\draw [-, dotted, thick]  (2) to (k) ;
			\draw [-, dotted, thick]  (2) to[out=-150,in=-30] (k-1) ;
			\draw [-, dotted, thick]    (2) to  (l'-1) ;
			\draw [-, dotted, thick]  (2) to (l') ;
			\draw [-, dotted, thick]  (2) to (3') ;
			\node (s) at (2,-1.3) {$\{b_{2'3'},b_{3'2'} \}$};
		\end{tikzpicture}
	\end{align*}	
	
	The arrow pair $\{a_{21},a_{12}\}$ (resp. $\{a_{2'1},a_{12'}\}$) is replaced by the arrow pair $\{a_{21}^*,a_{12}^*=(a_{21}^*)^{op}\}$ (resp. $\{a_{2'1}^*,a_{12}^*=(a_{2'1}^*)^{op}\}$) in the opposite direction.
	
	Similar to Case (2), we can cancel the arrows $\{a_{23},b_{23},\cdots ,a_{2,k} ,b_{2,k}\}$, $\{a_{32},b_{32},\cdots ,a_{k,2} ,b_{k,2}\}$,  $\{a_{2'3'},b_{2'3'},\cdots ,a_{2',l'} ,b_{2',l'}\}$ and $\{a_{3'2'},b_{3'2'},\cdots ,a_{l',2'} ,b_{l',2'}\}$. Thus, we get the following local graph, and hence the quasi-isomorphism $\Gamma_1 \rga \Gamma_{1}'=\Gamma_{2}$.
	
	\begin{align*}
		\begin{tikzpicture}[baseline=-3.5pt,scale=0.8]
			\node (1) at (0,0) {$1$};
			\node (2) at (-1,1) {$3$};
			\node (3) at (-3,1) {$4$};
			\node (s1) at (-4,0) {$\vdots$};
			\node (k-1) at  (-3,-1) {$k$};
			\node (k) at  (-1,-1) {$2'$};
			\node (2') at (1,-1) {$3'$};
			\node (3') at (3,-1) {$4'$};
			\node (s2) at (4,0) {$\vdots$};
			\node (l'-1) at  (3,1) {$l'$};
			\node (l') at  (1,1) {$2$};
			\draw [-] (1) to   (2) to (3) to  (k-1) to (k) to (1);
			\draw [-] (2) to   (k-1) to (1) to  (3) to (k) to (2);
			\draw [-] (1) to   (2') to (3') to  (l'-1) to (l') to (1);
			\draw [-] (l'-1) to (1)to  (3') to (l') to (2')to (l'-1);
		\end{tikzpicture}
	\end{align*}
\end{enumerate}
	In summary, if $1$ is not a self-folded arc, the theorem holds.	
\subsubsection{Self-folded case}\

	We consider the case when $1$ is a self-folded arc. We have the regular graph containing $1^+$ and $1^-$ as before. Again denote by $A=\{\alpha \in Q_1 |\alpha:j \rga 1 ,  |\alpha|=0 \}$.	So $|A| \in \{0, 1 ,2\}$ and we have the following cases.
	\begin{enumerate}[\text{Case}~(1)]
	\item $|A|=0$. Then the assertion follows by Definition~\ref{def:mut QsP} and Definition~\ref{mutation}.
	
	\item $|A|=1$. Suppose $A=\{a_{21^+} \}$ and the local graph is as follows. Let $S$ be the regular graph determined by the point set $M=\{1^+,2, \cdots , k,1^-,2',\cdots ,l'\}$.  
	\begin{align*}
		\begin{tikzpicture}[baseline=-2.5pt,scale=0.6]
			\node (1^+) at (-3,0) {$1^+$};
			\node (2) at (-2,-2) {$2$};
			\node (k) at (2,-2) {$k$};
			\node (s1) at (0,3) {$\cdots$};
			\node (1^-) at  (3,0) {$1^-$};
			\node (2') at (2,2) {$2'$};
			\node (s2) at (0,-3) {$\cdots$};
			\node (l') at  (-2,2) {$l'$};
			\draw [-] (1^+) to   (l') to (2') to  (1^-) to (k) to (2)to (1^+);
			\draw [-] (2) to   (l') to (1^-) to  (2) ;
			\draw [-] (2') to   (k) to (1^+) to  (2') ;
			\draw [-] (l')to(k) ;
			\draw [-] (2)to(2') ;
			\draw [-] (1^+)to(1^-) ;
		\end{tikzpicture}
	\end{align*}	
	 For convenience, we denote $\alpha=a_{21^+},\varphi=a_{1^+ 1^-},\varphi^{op}=a_{1^+ 1^-}^{op}=a_{1^- 1^+}$. By the definition of $\Gamma_{1}$, the new arrow pairs are  
	\[
	\{b_{2t},b_{t2} \}_{t\in P}\cup \{c_{2t},c_{t2} \}_{t\in P}\cup \{a_{21^+}a_{1^-2} ,a_{21^-}a_{21^+}^{-1}\},
	\]  
	\[ 
	\{\alpha \varphi \alpha^{-1},\alpha \varphi^{op} \alpha^{-1} \}\cup \{\alpha \varphi, \varphi^{op} \alpha^{-1}\}\cup \{\alpha \varphi^{op},\varphi \alpha^{-1}\},
	\] 
	where $P=M\verb|\|\{1^+,1^-,2\}$. We maintain the notations as in Definition~\ref{def:mut QsP}.
	\begin{equation*}
		\begin{aligned}
			b_{2j}=[a_{21^+}a_{1^+ j}]=a_{21^+}a_{1^+j} ,\ \ b_{j2}=[a_{j1^+}a_{21^+}^{-1}]=a_{j1^+}a_{21^+}^{-1},\ \forall j \in 	P,\\
			c_{2j}=[a_{21^+}a_{1^- j}]=a_{21^+}a_{1^-j} ,\ \ c_{j2}=[a_{j1^-}a_{21^+}^{-1}]=a_{j1^-}a_{21^+}^{-1},\ \forall j \in 	P.
		\end{aligned}	
	\end{equation*}
	\begin{align}\label{5.4}
		\begin{tikzpicture}[baseline=-2.5pt,scale=0.7]
			\node (1^+) at (-3,0) {$1^+$};
			\node (2) at (-2,-2) {$2$};
			\node (k) at (2,-2) {$k$};
			\node (s1) at (0,3) {$\cdots$};
			\node (1^-) at  (3,0) {$1^-$};
			\node (2') at (2,2) {$2'$};
			\node (s2) at (0,-3) {$\cdots$};
			\node (l') at  (-2,2) {$l'$};
			\draw [->, dotted, thick] (2) to (1^+);
			\draw (1^+) to (l'); 
			\draw (1^+) to (2');
			\draw (1^+) to (k);
			\draw[blue] (-2.9,-1.5)node{$a_{21^+}$};
			\begin{scope}[shift={(9,0)}]
				\node (1^+) at (-3,0) {$1^+$};
				\node (2) at (-2,-2) {$2$};
				\node (k) at (2,-2) {$k$};
				\node (s1) at (0,3) {$\cdots$};
				\node (1^-) at  (3,0) {$1^-$};
				\node (2') at (2,2) {$2'$};
				\node (s2) at (0,-3) {$\cdots$};
				\node (l') at  (-2,2) {$l'$};
				\draw (2) to (l'); 
				\draw (2) to (2');
				\draw (2) to (k);
				\node (s) at  (-.9,1) {$\{b_{2l'},b_{l'2} \}$};
			\end{scope}	
		\end{tikzpicture}
	\end{align}	
	\begin{align}\label{5.5}
		\begin{tikzpicture}[baseline=-2.5pt,scale=0.7]
			\node (1^+) at (-3,0) {$1^+$};
			\node (2) at (-2,-2) {$2$};
			\node (k) at (2,-2) {$k$};
			\node (s1) at (0,3) {$\cdots$};
			\node (1^-) at  (3,0) {$1^-$};
			\node (2') at (2,2) {$2'$};
			\node (s2) at (0,-3) {$\cdots$};
			\node (l') at  (-2,2) {$l'$};
			\draw [->, dotted, thick] (2) to (1^+);
			\draw (1^-) to (l'); 
			\draw (1^-) to (2');
			\draw (1^-) to (k);
			\draw (1^-) to (2);
			\draw[blue] (-2.9,-1.5)node{$a_{21^+}$};
			\begin{scope}[shift={(9,0)}]
				\node (1^+) at (-3,0) {$1^+$};
				\node (2) at (-2,-2) {$2$};
				\node (k) at (2,-2) {$k$};
				\node (s1) at (0,3) {$\cdots$};
				\node (1^-) at  (3,0) {$1^-$};
				\node (2') at (2,2) {$2'$};
				\node (s2) at (0,-3) {$\cdots$};
				\node (l') at  (-2,2) {$l'$};
				\draw (2) to (l'); 
				\draw (2) to (2');
				\draw (2) to (k);
				\draw [-,  thick]  (2) to[out=-90,in=-30] (-3,-3) to[out=150,in=180](2) ;
				\node (s) at  (-.9,1) {$\{c_{2l'},c_{l'2} \}$};
				\node (t) at  (-3,-3.5) {$\{a_{21^+}a_{1^-2} ,a_{21^-}a_{21^+}^{-1}\}$};  
			\end{scope}	
		\end{tikzpicture}
	\end{align}
			
	(\ref{5.4}) and (\ref{5.5}) are illustrations of the generation process of the arrow pairs  $\{b_{2t},b_{t2} \}_{t\in P}\cup\{c_{2t},c_{t2} \}_{t\in P}$,\ $ \{a_{21^+}a_{1^-2} ,a_{21^-}a_{21^+}^{-1}\}$. Notice that $a_{21^+}a_{1^-2} ,a_{21^-}a_{21^+}^{-1}$ are loops attached to $2$.
	
	Next, we give an illustration (see (\ref{5.6})) of the generation process of the arrow pairs 	$\{\alpha \varphi, \varphi^{op} \alpha^{-1}\}\cup \{\alpha \varphi^{op},\varphi \alpha^{-1}\} \cup \{\alpha \varphi \alpha^{-1},\alpha \varphi^{op} \alpha^{-1} \}$, where
	\begin{itemize}
		\item $\{\alpha \varphi, \varphi^{op} \alpha^{-1}\}\cup \{\alpha \varphi^{op},\varphi \alpha^{-1}\}$ is given by $\varphi, \varphi^{op}$ in Definition~\ref{def:mut QsP}~(3); and
		\item $\{\alpha \varphi \alpha^{-1},\alpha \varphi^{op} \alpha^{-1} \}$ is given by $\varphi, \varphi^{op}$ in Definition~\ref{def:mut QsP}~(4).
	\end{itemize}
	\begin{align}\label{5.6}
		\begin{tikzpicture}[baseline=-2.5pt,scale=0.6]
			\node (1^+) at (-3,0) {$1^+$};
			\node (2) at (-2,-2) {$2$};
			\node (k) at (2,-2) {$k$};
			\node (s1) at (0,3) {$\cdots$};
			\node (1^-) at  (3,0) {$1^-$};
			\node (2') at (2,2) {$2'$};
			\node (s2) at (0,-3) {$\cdots$};
			\node (l') at  (-2,2) {$l'$};
			\draw [->, dotted, thick] (2) to (1^+);
			\draw (1^-) to (1^+); 
			\draw[blue] (-2.9,-1.5)node{$a_{21^+}$};
			\draw[blue] (0,-0.5)node{$\varphi ^{op}$};
			\draw[blue](0,0.5)node{$\varphi $};
			\begin{scope}[shift={(9,0)}]
				\node (1^+) at (-3,0) {$1^+$};
				\node (2) at (-2,-2) {$2$};
				\node (k) at (2,-2) {$k$};
				\node (s1) at (0,3) {$\cdots$};
				\node (1^-) at  (3,0) {$1^-$};
				\node (2') at (2,2) {$2'$};
				\node (s2) at (0,-3) {$\cdots$};
				\node (l') at  (-2,2) {$l'$};
				\draw [ thick]  (2) to[out=10,in=-170] (1^-) ;
				\draw [ thick]  (2) to[out=20,in=-180] (1^-) ;
				\draw [-,  thick]  (2) to[out=-90,in=-30] (-3,-3) to[out=150,in=180](2) ;
				\draw[blue] (-3.3,-1.7)node{$\alpha \varphi \alpha^{-1}$};
				\draw[blue] (-2.1,-3.5)node{$\alpha \varphi^{op} \alpha^{-1}$};
				\draw[blue] (-1,-1)node{$\alpha \varphi $};
				\draw[blue] (-.5,-2.1)node{$\alpha \varphi^{op} $};
				\draw[blue] (1,0.2)node{$ \varphi^{op}\alpha^{-1} $};
				\draw[blue] (-.5,-2.1)node{$\alpha \varphi^{op} $};
				\draw[blue] (1.3,-1)node{$ \varphi \alpha^{-1} $};
			\end{scope}	
		\end{tikzpicture}
	\end{align}	
	The arrow pair $\{a_{21^+},a_{1^+2}\}$ is replaced by the arrow pair $\{a_{21^+}^*,a_{1^+2}^*=(a_{21^+}^*)^{op}\}$  in the opposite direction. 
			
	Next, we compute the differential in $\Gamma_{1}$ to cancel arrow pairs.	Since
	\[
		\begin{aligned}
			d_1(a_{2,l'})&=\dec (\red (d(a_{2,l'})))=a_{2,1^+}a_{1^+,l'}=b_{2,l'},
		\end{aligned}	
	\]
	we cancel $\{a_{2,l'},b_{2,l'} \}$.
	
	Since
	\begin{equation*}
		\begin{aligned}
			d_1(a_{2,l'-1})&=\dec (\red (d(a_{2,l'-1})))\\ &=a_{21^+}a_{1^+,l'-1}+a_{2l'}a_{l',l'-1}\\
			&=b_{2,l'-1},
		\end{aligned}	
	\end{equation*}	
	we cancel $\{a_{2,l'-1},b_{2,l'-1} \}$. 
	
	By similar computations, we have
	\begin{equation*}
		\begin{aligned}
			d_1(a_{2,l'-2})&=b_{2,l'-2} ,\\ & \vdots \\
			d_1(a_{22'})&=b_{22'}.
		\end{aligned}	
	\end{equation*}
	So $\{a_{2,l'-2},b_{2,l'-2}\},\cdots ,\{a_{22'},b_{22'}\}$ are canceled. 
	
	Since
	\begin{equation*}
		\begin{aligned}
			d_1(a_{21^-})&=\dec (\red (d(a_{21^-})))\\ &=a_{21^+}a_{1^+ 1^-}+0\\
			&=\alpha \varphi ,
		\end{aligned}	
	\end{equation*}	
	we also cancel $\{a_{21^-},\alpha \varphi \}$.	
	
	Consider arrows $a_{21^-}a_{21^+}^{-1},\alpha \varphi \alpha^{-1}$, we have 
	\begin{equation*}
		\begin{aligned}
			d_1(a_{21^-}a_{21^+}^{-1})&=d_1(a_{21^-})a_{21^+}^{-1}+a_{21^-}a_{21^+}^*+\dec (d(a_{21^-})/a_{21^+})\\
			&=\alpha \varphi \alpha^{-1}+0+0\\
			&=\alpha \varphi \alpha^{-1}.
		\end{aligned}	
	\end{equation*}	
	So we cancel $\{a_{21^-}a_{21^+}^{-1},\alpha \varphi \alpha^{-1}  \}$. Since
	\begin{equation*}
		\begin{aligned}
			d_1(a_{2,k})&=\dec (\red (d(a_{2,k})))\\ &=a_{21^-}\Delta a_{1^-,k}+a_{21^+}a_{1^+k}\\
			&=a_{21^-}(1-a_{21^+}^{-1}a_{21^+}) a_{1^-,k}+b_{2,k} \\
			&=a_{21^-}a_{21^+}^{-1}a_{21^+}a_{1^-,k}+b_{2,k} \\
			&= (a_{21^-}a_{21^+}^{-1})c_{2,k}+b_{2,k} \\
			&=b_{2,k},
		\end{aligned}	
	\end{equation*}	
	we cancel $\{a_{2,k},b_{2,k}\}$.  By similar computations, we cancel $\{a_{2,k-1},b_{2,k-1}\},\cdots,\{a_{23},b_{23} \}$. 
	
	Since
	\begin{equation*}
		\begin{aligned}
			d_1(b_{32})&=d_1(a_{31^+}a_{21^+}^{-1}) \\
			&=d_1(a_{31^+})a_{21^+}^{-1}+a_{31^+}a_{21^+}^*+\dec (d(a_{31^+})/ a_{21^+})\\
			&=\dec (\red(d(a_{31^+})) )a_{21^+}^{-1}+a_{31^+}a_{21^+}^*+a_{32} \\
			&=a_{31^+}a_{21^+}^*+a_{32},
		\end{aligned}	
	\end{equation*}		
	we cancel $\{b_{32},a_{32}\}$. Similar as before, all $a_{32}$ that appear in the differential of other arrows are replaced by the relation $a_{32}=\pm a_{31^+}a_{21^+}^*$ after cancellation. 

	By similar computations, $\{b_{s2},a_{s2}\}, s\in\{3,\cdots ,k\}$ are canceled. Similarly, we replace all $a_{s2}$ that appear in the differential of other arrows by the relation $a_{s2}=\pm a_{s1^+}a_{21^+}^*$ after cancellation.	
	
	Since $\varphi^{op}=a_{1^-1^+}$, we have
	\begin{equation*}
		\begin{aligned}
			d_1(\alpha \varphi^{op} \alpha^{-1})&=d_1(\alpha \varphi^{op} )\alpha^{-1}+ \alpha \varphi^{op} \alpha ^*+\alpha \dec (d(\varphi^{op})/\alpha)\\
			&=\alpha \dec (\red (d (\varphi^{op})))\alpha^{-1}+\alpha \varphi^{op} \alpha ^*+\alpha a_{1^-2}\\
			&=\sum_{s=3}^{k}c_{2s}b_{s2}+\alpha \varphi^{op} a_{21^+}^*+ a_{21^+}a_{1^-2}\\
			&=\alpha \varphi^{op} a_{21^+}^*+ a_{21^+}a_{1^-2}.
		\end{aligned}	
	\end{equation*}		
	So we cancel $\{\alpha \varphi^{op} \alpha^{-1},a_{21^+}a_{1^-2}\}$. Similarly we replace $a_{21^+}a_{1^-2}$ by $a_{21^+}a_{1^-2}=\pm \alpha \varphi^{op} a_{21^+}^*$. 
	
	Since
	\begin{equation*}
		\begin{aligned}
			d_1( \varphi^{op} \alpha^{-1})&=d_1(\varphi^{op} )\alpha^{-1}+ \varphi^{op} \alpha ^*+\dec (d(\varphi^{op})/\alpha)\\
			&=\dec (d(\varphi^{op}))\alpha^{-1} + \alpha ^*\alpha \varphi^{op} \alpha^{-1}  + \varphi^{op} \alpha ^*  + a_{1^-2}\\
			&=\sum_{s=3}^{k}a_{1^-s}b_{s2}  +\varphi^{op} \alpha ^*  + a_{1^-2} \\
			&=\varphi^{op} \alpha ^*  + a_{1^-2},
		\end{aligned}	
	\end{equation*}		
	we cancel $\{\varphi^{op} \alpha^{-1},a_{1^-2}\}$. Similarly we replace $a_{1^-2}$ by the relation $a_{1^-2}=\pm \varphi^{op} \alpha ^*=\pm a_{1^- 1^+}a_{21^+}^*$ in the differential of other arrows. 	
	
	Since
	\begin{equation*}
		\begin{aligned}
			d_1(b_{2'2})&=d_1(a_{2'1^+}a_{21^+}^{-1}) \\
			&=d_1(a_{2'1^+})a_{21^+}^{-1}+a_{2'1^+}a_{21^+}^*+\dec (d(a_{2'1^+})/ a_{21^+})\\
			&=\dec (\red(d(a_{2'1^+})) )a_{21^+}^{-1}+a_{2'1^+}a_{21^+}^*+a_{2'2} \\
			&=\dec ( a_{2'1^-}\varphi^{op} +\sum_{s=3}^{k}a_{2's}a_{s1^+})a_{21^+}^{-1}+      a_{2'1^+}a_{21^+}^*+a_{2'2}\\
			&=a_{2'1^-}\Delta  \varphi^{op}a_{21^+}^{-1}+\sum_{s=3}^{k}a_{2's}b_{s2}+a_{2'1^+}a_{21^+}^*+a_{2'2}\\
			&= a_{2'1^-}\varphi^{op} \alpha^{-1}+c_{2'2} \alpha\varphi^{op} \alpha^{-1}+a_{2'1^+}a_{21^+}^*+a_{2'2}\\
			&=a_{2'1^+}a_{21^+}^*+a_{2'2},
		\end{aligned}	
	\end{equation*}		
	we cancel $\{b_{2'2} ,a_{2'2}\}$ and replace $a_{2'2}$ by the relation $a_{2'2}=\pm a_{2'1^+}a_{21^+}^*$. 
	
	Similar computations as before, we cancel all $\{b_{s2} ,a_{s2}\}, s \in \{3',\cdots ,l' \}$, and replace $a_{s2}$ (that appear in the differential of other arrows) by the relation $a_{s2}=\pm a_{s1^+}a_{21^+}^*$.
	
	Finally, we have canceled 
	$\{a_{2k},a_{k2},b_{2k},b_{k2}\}_{k\in P}$, and got relations $a_{k2}=\pm a_{k1^+}a_{21^+}^*$, for all $k\in P$. In particular, 
	\begin{itemize}
		\item for arrows between $2$ and $1^-$: $\{a_{21^-},a_{1^-2},\alpha \varphi,\varphi^{op}\alpha^{-1}, \alpha \varphi^{op}, \varphi\alpha^{-1}\}$, we have canceled $\{  a_{21^-},a_{1^-2},\alpha \varphi,\varphi^{op}\alpha^{-1}\}$, and got relations $a_{1^-2}=\pm a_{1^- 1^+}a_{21^+}^*$;
		
		\item for loops attached on $2$: $\{\alpha \varphi \alpha^{-1},\alpha \varphi^{op} \alpha^{-1}, a_{21^+}a_{1^-2} ,a_{21^-}a_{21^+}^{-1} \}$, we have canceled them all and got relations $a_{21^+}a_{1^-2}=\pm \alpha \varphi^{op} a_{21^+}^*$.
	\end{itemize}
	In fact, the differential of arrows does not involve above relations by definition of superpotential.
	
	We can view $\{a_{21^+}^*,a_{1^+2}^*\}$ as the arrow pair between $2$ and $1^-$, and $\{\alpha \varphi^{op},\varphi \alpha^{-1}\}$ the arrow pair between $2$ and $1^+$. We have successfully canceled arrows that do not appear in $\Gamma_{2}$. Thus, we have the following local graph.
	\begin{align*}
		\begin{tikzpicture}[baseline=-2.5pt,scale=0.6]
			\node (1^+) at (-3,0) {$1^+$};
			\node (2) at (-2,-2) {$3$};
			\node (k) at (2,-2) {$2$};
			\node (s1) at (0,3) {$\cdots$};
			\node (1^-) at  (3,0) {$1^-$};
			\node (2') at (2,2) {$2'$};
			\node (s2) at (0,-3) {$\cdots$};
			\node (l') at  (-2,2) {$l'$};
			\draw [-] (1^+) to   (l') to (2') to  (1^-) to (k) to (2)to (1^+);
			\draw [-] (2) to   (l') to (1^-) to  (2) ;
			\draw [-] (2') to   (k) to (1^+) to  (2') ;
			\draw [-] (l')to(k) ;
			\draw [-] (2)to(2') ;
			\draw [-] (1^+)to(1^-) ;
			\node (t) at (0,-2.3) {$\{c_{23},c_{32} \}$};
		\end{tikzpicture},
	\end{align*}	
	where vertices between $1^+$ and $1^-$ are $\{3,4,\cdots ,k,2 \}$ and $\{2',3',\cdots ,l' \}$.	
	
	 Since differentials involves no relations, it follows that we have the following quasi-isomorphism:
	$$\Gamma_1 \rga \Gamma_{1}'=\Gamma_{2}.$$	
	\item $|A|=2$. The assertion follows by a similar discussion as in Case~(2).
\end{enumerate}
	In summary, if $1$ is a self-folded arc, the theorem holds as well.
\section{Remarks}
The main results of the paper are contained in Bo Le's Master thesis \cite{le} written in December 2023 at Tsinghua University. We have recently noted that Lucie Jacquet-Malo \cite{jacquet-malo_construction_2024} 
has done a similar work. This section is dedicated to highlighting  the differences between the two papers, particularly regarding the key definition: the quiver with superpotential $(Q,W)$ associated to a $d$-angulation $(S,M,D)$.

For a given $(S,M,D)$, we associate a different quiver with superpotential $(Q,W)$  associated to it.  To illustrate this, we use the example from Jacquet-Malo's article (see  \cite[Figure~8]{jacquet-malo_construction_2024})  to provide a visual comparison.
\begin{center}
	\begin{tikzpicture}[scale=0.8]
		\draw (-3,4)-- (-3,-5);
		\draw (0,4)-- (0,-5);
		\draw (3,-5)-- (3,4);
		\draw (-3,4)-- (3,4);
		\draw (-3,-5)-- (3,-5);
		\draw (-3,1)-- (3,1);
		\draw (-3,-2)-- (3,-2);
		\draw [fill=black,pattern=north east lines] (-1.5,-0.5) circle (0.3cm);
		\draw [fill=black,pattern=north east lines] (-1.5,-3.5) circle (0.3cm);
		\draw [fill=black,pattern=north east lines] (-1.5,2.5) circle (0.3cm);
		\draw [fill=black,pattern=north east lines] (1.5,2.5) circle (0.3cm);
		\draw [fill=black,pattern=north east lines] (1.5,-0.5) circle (0.3cm);
		\draw [fill=black,pattern=north east lines] (1.5,-3.5) circle (0.3cm);
		\draw [shift={(-0.97,0.45)},color=blue]  plot[domain=4.31:5.51,variable=\t]({1*1.36*cos(\t r)+0*1.36*sin(\t r)},{0*1.36*cos(\t r)+1*1.36*sin(\t r)});
		\draw [shift={(0.97,-1.45)},color=blue]  plot[domain=1.17:2.37,variable=\t]({1*1.36*cos(\t r)+0*1.36*sin(\t r)},{0*1.36*cos(\t r)+1*1.36*sin(\t r)});
		\draw [shift={(-0.97,3.45)},color=blue]  plot[domain=4.31:5.51,variable=\t]({1*1.36*cos(\t r)+0*1.36*sin(\t r)},{0*1.36*cos(\t r)+1*1.36*sin(\t r)});
		\draw [shift={(-0.97,-2.55)},color=blue]  plot[domain=4.31:5.51,variable=\t]({1*1.36*cos(\t r)+0*1.36*sin(\t r)},{0*1.36*cos(\t r)+1*1.36*sin(\t r)});
		\draw [shift={(0.97,1.55)},color=blue]  plot[domain=1.17:2.37,variable=\t]({1*1.36*cos(\t r)+0*1.36*sin(\t r)},{0*1.36*cos(\t r)+1*1.36*sin(\t r)});
		\draw [shift={(0.97,-4.45)},color=blue]  plot[domain=1.17:2.37,variable=\t]({1*1.36*cos(\t r)+0*1.36*sin(\t r)},{0*1.36*cos(\t r)+1*1.36*sin(\t r)});
		\draw [shift={(2.03,3.45)},color=blue]  plot[domain=4.31:5.51,variable=\t]({1*1.36*cos(\t r)+0*1.36*sin(\t r)},{0*1.36*cos(\t r)+1*1.36*sin(\t r)});
		\draw [shift={(2.03,0.45)},color=blue]  plot[domain=4.31:5.51,variable=\t]({1*1.36*cos(\t r)+0*1.36*sin(\t r)},{0*1.36*cos(\t r)+1*1.36*sin(\t r)});
		\draw [shift={(2.03,-2.55)},color=blue]  plot[domain=4.31:5.51,variable=\t]({1*1.36*cos(\t r)+0*1.36*sin(\t r)},{0*1.36*cos(\t r)+1*1.36*sin(\t r)});
		\draw [shift={(-2.03,-4.45)},color=blue]  plot[domain=1.17:2.37,variable=\t]({1*1.36*cos(\t r)+0*1.36*sin(\t r)},{0*1.36*cos(\t r)+1*1.36*sin(\t r)});
		\draw [shift={(-2.03,-1.45)},color=blue]  plot[domain=1.17:2.37,variable=\t]({1*1.36*cos(\t r)+0*1.36*sin(\t r)},{0*1.36*cos(\t r)+1*1.36*sin(\t r)});
		\draw [shift={(-2.03,1.55)},color=blue]  plot[domain=1.17:2.37,variable=\t]({1*1.36*cos(\t r)+0*1.36*sin(\t r)},{0*1.36*cos(\t r)+1*1.36*sin(\t r)});
		\draw [color=red] (-1.5,2.8)-- (-1.5,4);
		\draw [color=red] (-1.5,2.2)-- (-1.5,-0.2);
		\draw [color=red] (-1.5,-0.8)-- (-1.5,-3.2);
		\draw [color=red] (-1.5,-3.8)-- (-1.5,-5);
		\draw [color=red] (1.5,2.8)-- (1.5,4);
		\draw [color=red] (1.5,2.2)-- (1.5,-0.2);
		\draw [color=red] (1.5,-0.8)-- (1.5,-3.2);
		\draw [color=red] (1.5,-3.8)-- (1.5,-5);
		\draw [shift={(-1.78,0.96)},color=green]  plot[domain=-1.41:0.02,variable=\t]({1*1.78*cos(\t r)+0*1.78*sin(\t r)},{0*1.78*cos(\t r)+1*1.78*sin(\t r)});
		\draw [shift={(1.78,1.04)},color=green]  plot[domain=1.73:3.16,variable=\t]({1*1.78*cos(\t r)+0*1.78*sin(\t r)},{0*1.78*cos(\t r)+1*1.78*sin(\t r)});
		\draw [shift={(-1.78,3.96)},color=green]  plot[domain=-1.41:0.02,variable=\t]({1*1.78*cos(\t r)+0*1.78*sin(\t r)},{0*1.78*cos(\t r)+1*1.78*sin(\t r)});
		\draw [shift={(-1.78,-2.04)},color=green]  plot[domain=-1.41:0.02,variable=\t]({1*1.78*cos(\t r)+0*1.78*sin(\t r)},{0*1.78*cos(\t r)+1*1.78*sin(\t r)});
		\draw [shift={(1.78,-1.96)},color=green]  plot[domain=1.73:3.16,variable=\t]({1*1.78*cos(\t r)+0*1.78*sin(\t r)},{0*1.78*cos(\t r)+1*1.78*sin(\t r)});
		\draw [shift={(1.78,-4.96)},color=green]  plot[domain=1.73:3.16,variable=\t]({1*1.78*cos(\t r)+0*1.78*sin(\t r)},{0*1.78*cos(\t r)+1*1.78*sin(\t r)});
		\draw [shift={(-1.22,1.04)},color=green]  plot[domain=1.73:3.16,variable=\t]({1*1.78*cos(\t r)+0*1.78*sin(\t r)},{0*1.78*cos(\t r)+1*1.78*sin(\t r)});
		\draw [shift={(1.22,3.96)},color=green]  plot[domain=-1.41:0.02,variable=\t]({1*1.78*cos(\t r)+0*1.78*sin(\t r)},{0*1.78*cos(\t r)+1*1.78*sin(\t r)});
		\draw [shift={(1.22,0.96)},color=green]  plot[domain=-1.41:0.02,variable=\t]({1*1.78*cos(\t r)+0*1.78*sin(\t r)},{0*1.78*cos(\t r)+1*1.78*sin(\t r)});
		\draw [shift={(-1.22,-1.96)},color=green]  plot[domain=1.73:3.16,variable=\t]({1*1.78*cos(\t r)+0*1.78*sin(\t r)},{0*1.78*cos(\t r)+1*1.78*sin(\t r)});
		\draw [shift={(1.22,-2.04)},color=green]  plot[domain=-1.41:0.02,variable=\t]({1*1.78*cos(\t r)+0*1.78*sin(\t r)},{0*1.78*cos(\t r)+1*1.78*sin(\t r)});
		\draw [shift={(-1.22,-4.96)},color=green]  plot[domain=1.73:3.16,variable=\t]({1*1.78*cos(\t r)+0*1.78*sin(\t r)},{0*1.78*cos(\t r)+1*1.78*sin(\t r)});
		\begin{scriptsize}
			\draw [color=black] (-1.5,2.8)-- ++(-1.5pt,-1.5pt) -- ++(3.0pt,3.0pt) ++(-3.0pt,0) -- ++(3.0pt,-3.0pt);
			\draw [color=black] (-1.5,2.2)-- ++(-1.5pt,-1.5pt) -- ++(3.0pt,3.0pt) ++(-3.0pt,0) -- ++(3.0pt,-3.0pt);
			\draw [color=black] (-1.5,-0.2)-- ++(-1.5pt,-1.5pt) -- ++(3.0pt,3.0pt) ++(-3.0pt,0) -- ++(3.0pt,-3.0pt);
			\draw [color=black] (-1.5,-0.8)-- ++(-1.5pt,-1.5pt) -- ++(3.0pt,3.0pt) ++(-3.0pt,0) -- ++(3.0pt,-3.0pt);
			\draw [color=black] (-1.5,-3.2)-- ++(-1.5pt,-1.5pt) -- ++(3.0pt,3.0pt) ++(-3.0pt,0) -- ++(3.0pt,-3.0pt);
			\draw [color=black] (-1.5,-3.8)-- ++(-1.5pt,-1.5pt) -- ++(3.0pt,3.0pt) ++(-3.0pt,0) -- ++(3.0pt,-3.0pt);
			\draw [color=black] (1.5,2.8)-- ++(-1.5pt,-1.5pt) -- ++(3.0pt,3.0pt) ++(-3.0pt,0) -- ++(3.0pt,-3.0pt);
			\draw [color=black] (1.5,2.2)-- ++(-1.5pt,-1.5pt) -- ++(3.0pt,3.0pt) ++(-3.0pt,0) -- ++(3.0pt,-3.0pt);
			\draw [color=black] (1.5,-0.8)-- ++(-1.5pt,-1.5pt) -- ++(3.0pt,3.0pt) ++(-3.0pt,0) -- ++(3.0pt,-3.0pt);
			\draw [color=black] (1.5,-3.8)-- ++(-1.5pt,-1.5pt) -- ++(3.0pt,3.0pt) ++(-3.0pt,0) -- ++(3.0pt,-3.0pt);
			\draw [color=black] (1.5,-0.2)-- ++(-1.5pt,-1.5pt) -- ++(3.0pt,3.0pt) ++(-3.0pt,0) -- ++(3.0pt,-3.0pt);
			\draw [color=black] (1.5,-3.2)-- ++(-1.5pt,-1.5pt) -- ++(3.0pt,3.0pt) ++(-3.0pt,0) -- ++(3.0pt,-3.0pt);
			\draw[color=blue] (-0.1,2.26) node {$2$};
			\draw[color=red] (-1.38,1.24) node {$3$};
			\draw[color=green] (-0.07,1.22) node {$1$};
		\end{scriptsize}
	\end{tikzpicture}
	{\ \ \ \ \ }
	\begin{tikzpicture}[scale=0.8]
		\draw (-3,4)-- (-3,-5);
		\draw (0,4)-- (0,-5);
		\draw (3,-5)-- (3,4);
		\draw (-3,4)-- (3,4);
		\draw (-3,-5)-- (3,-5);
		\draw (-3,1)-- (3,1);
		\draw (-3,-2)-- (3,-2);
		\draw [fill=black,pattern=north east lines] (-1.5,-0.5) circle (0.3cm);
		\draw [fill=black,pattern=north east lines] (-1.5,-3.5) circle (0.3cm);
		\draw [fill=black,pattern=north east lines] (-1.5,2.5) circle (0.3cm);
		\draw [fill=black,pattern=north east lines] (1.5,2.5) circle (0.3cm);
		\draw [fill=black,pattern=north east lines] (1.5,-0.5) circle (0.3cm);
		\draw [fill=black,pattern=north east lines] (1.5,-3.5) circle (0.3cm);
		\draw [color=red] (-1.5,2.8)-- (-1.5,4);
		\draw [color=red] (-1.5,2.2)-- (-1.5,-0.2);
		\draw [color=red] (-1.5,-0.8)-- (-1.5,-3.2);
		\draw [color=red] (-1.5,-3.8)-- (-1.5,-5);
		\draw [color=red] (1.5,2.8)-- (1.5,4);
		\draw [color=red] (1.5,2.2)-- (1.5,-0.2);
		\draw [color=red] (1.5,-0.8)-- (1.5,-3.2);
		\draw [color=red] (1.5,-3.8)-- (1.5,-5);
		\draw [shift={(-1.78,0.96)},color=green]  plot[domain=-1.41:0.02,variable=\t]({1*1.78*cos(\t r)+0*1.78*sin(\t r)},{0*1.78*cos(\t r)+1*1.78*sin(\t r)});
		\draw [shift={(1.78,1.04)},color=green]  plot[domain=1.73:3.16,variable=\t]({1*1.78*cos(\t r)+0*1.78*sin(\t r)},{0*1.78*cos(\t r)+1*1.78*sin(\t r)});
		\draw [shift={(-1.78,3.96)},color=green]  plot[domain=-1.41:0.02,variable=\t]({1*1.78*cos(\t r)+0*1.78*sin(\t r)},{0*1.78*cos(\t r)+1*1.78*sin(\t r)});
		\draw [shift={(-1.78,-2.04)},color=green]  plot[domain=-1.41:0.02,variable=\t]({1*1.78*cos(\t r)+0*1.78*sin(\t r)},{0*1.78*cos(\t r)+1*1.78*sin(\t r)});
		\draw [shift={(1.78,-1.96)},color=green]  plot[domain=1.73:3.16,variable=\t]({1*1.78*cos(\t r)+0*1.78*sin(\t r)},{0*1.78*cos(\t r)+1*1.78*sin(\t r)});
		\draw [shift={(1.78,-4.96)},color=green]  plot[domain=1.73:3.16,variable=\t]({1*1.78*cos(\t r)+0*1.78*sin(\t r)},{0*1.78*cos(\t r)+1*1.78*sin(\t r)});
		\draw [shift={(-1.22,1.04)},color=green]  plot[domain=1.73:3.16,variable=\t]({1*1.78*cos(\t r)+0*1.78*sin(\t r)},{0*1.78*cos(\t r)+1*1.78*sin(\t r)});
		\draw [shift={(1.22,3.96)},color=green]  plot[domain=-1.41:0.02,variable=\t]({1*1.78*cos(\t r)+0*1.78*sin(\t r)},{0*1.78*cos(\t r)+1*1.78*sin(\t r)});
		\draw [shift={(1.22,0.96)},color=green]  plot[domain=-1.41:0.02,variable=\t]({1*1.78*cos(\t r)+0*1.78*sin(\t r)},{0*1.78*cos(\t r)+1*1.78*sin(\t r)});
		\draw [shift={(-1.22,-1.96)},color=green]  plot[domain=1.73:3.16,variable=\t]({1*1.78*cos(\t r)+0*1.78*sin(\t r)},{0*1.78*cos(\t r)+1*1.78*sin(\t r)});
		\draw [shift={(1.22,-2.04)},color=green]  plot[domain=-1.41:0.02,variable=\t]({1*1.78*cos(\t r)+0*1.78*sin(\t r)},{0*1.78*cos(\t r)+1*1.78*sin(\t r)});
		\draw [shift={(-1.22,-4.96)},color=green]  plot[domain=1.73:3.16,variable=\t]({1*1.78*cos(\t r)+0*1.78*sin(\t r)},{0*1.78*cos(\t r)+1*1.78*sin(\t r)});
		\draw [shift={(-1.46,-0.32)},color=blue]  plot[domain=0.13:1.53,variable=\t]({-0.03*3.48*cos(\t r)+1*1.47*sin(\t r)},{-1*3.48*cos(\t r)+-0.03*1.47*sin(\t r)});
		\draw [shift={(1.46,-0.68)},color=blue]  plot[domain=0.13:1.53,variable=\t]({0.03*3.48*cos(\t r)+-1*1.47*sin(\t r)},{1*3.48*cos(\t r)+0.03*1.47*sin(\t r)});
		\draw [shift={(-1.46,2.68)},color=blue]  plot[domain=0.13:1.53,variable=\t]({-0.03*3.48*cos(\t r)+1*1.47*sin(\t r)},{-1*3.48*cos(\t r)+-0.03*1.47*sin(\t r)});
		\draw [shift={(1.46,-3.68)},color=blue]  plot[domain=0.13:1.53,variable=\t]({0.03*3.48*cos(\t r)+-1*1.47*sin(\t r)},{1*3.48*cos(\t r)+0.03*1.47*sin(\t r)});
		\draw [shift={(1.54,2.68)},color=blue]  plot[domain=0.13:1.53,variable=\t]({-0.03*3.48*cos(\t r)+1*1.47*sin(\t r)},{-1*3.48*cos(\t r)+-0.03*1.47*sin(\t r)});
		\draw [shift={(1.54,-0.32)},color=blue]  plot[domain=0.13:1.53,variable=\t]({-0.03*3.48*cos(\t r)+1*1.47*sin(\t r)},{-1*3.48*cos(\t r)+-0.03*1.47*sin(\t r)});
		\draw [shift={(-1.54,-3.68)},color=blue]  plot[domain=0.13:1.53,variable=\t]({0.03*3.48*cos(\t r)+-1*1.47*sin(\t r)},{1*3.48*cos(\t r)+0.03*1.47*sin(\t r)});
		\draw [shift={(-1.54,-0.68)},color=blue]  plot[domain=0.13:1.53,variable=\t]({0.03*3.48*cos(\t r)+-1*1.47*sin(\t r)},{1*3.48*cos(\t r)+0.03*1.47*sin(\t r)});
		\draw [shift={(1.46,2.32)},color=blue]  plot[domain=4.22:4.67,variable=\t]({-0.03*3.48*cos(\t r)+1*1.47*sin(\t r)},{-1*3.48*cos(\t r)+-0.03*1.47*sin(\t r)});
		\begin{scriptsize}
			\draw [color=black] (-1.5,2.8)-- ++(-1.5pt,-1.5pt) -- ++(3.0pt,3.0pt) ++(-3.0pt,0) -- ++(3.0pt,-3.0pt);
			\draw [color=black] (-1.5,2.2)-- ++(-1.5pt,-1.5pt) -- ++(3.0pt,3.0pt) ++(-3.0pt,0) -- ++(3.0pt,-3.0pt);
			\draw [color=black] (-1.5,-0.2)-- ++(-1.5pt,-1.5pt) -- ++(3.0pt,3.0pt) ++(-3.0pt,0) -- ++(3.0pt,-3.0pt);
			\draw [color=black] (-1.5,-0.8)-- ++(-1.5pt,-1.5pt) -- ++(3.0pt,3.0pt) ++(-3.0pt,0) -- ++(3.0pt,-3.0pt);
			\draw [color=black] (-1.5,-3.2)-- ++(-1.5pt,-1.5pt) -- ++(3.0pt,3.0pt) ++(-3.0pt,0) -- ++(3.0pt,-3.0pt);
			\draw [color=black] (-1.5,-3.8)-- ++(-1.5pt,-1.5pt) -- ++(3.0pt,3.0pt) ++(-3.0pt,0) -- ++(3.0pt,-3.0pt);
			\draw [color=black] (1.5,2.8)-- ++(-1.5pt,-1.5pt) -- ++(3.0pt,3.0pt) ++(-3.0pt,0) -- ++(3.0pt,-3.0pt);
			\draw [color=black] (1.5,2.2)-- ++(-1.5pt,-1.5pt) -- ++(3.0pt,3.0pt) ++(-3.0pt,0) -- ++(3.0pt,-3.0pt);
			\draw [color=black] (1.5,-0.8)-- ++(-1.5pt,-1.5pt) -- ++(3.0pt,3.0pt) ++(-3.0pt,0) -- ++(3.0pt,-3.0pt);
			\draw [color=black] (1.5,-3.8)-- ++(-1.5pt,-1.5pt) -- ++(3.0pt,3.0pt) ++(-3.0pt,0) -- ++(3.0pt,-3.0pt);
			\draw[color=red] (-1.33,1.33) node {$3$};
			\draw [color=black] (1.5,-0.2)-- ++(-1.5pt,-1.5pt) -- ++(3.0pt,3.0pt) ++(-3.0pt,0) -- ++(3.0pt,-3.0pt);
			\draw [color=black] (1.5,-3.2)-- ++(-1.5pt,-1.5pt) -- ++(3.0pt,3.0pt) ++(-3.0pt,0) -- ++(3.0pt,-3.0pt);
			\draw[color=green] (-0.24,0.32) node {$1$};
			\draw[color=blue] (-0.59,0.59) node {$2$};
		\end{scriptsize}
	\end{tikzpicture}
\end{center}

By the Definition~\ref{def:QW}, we have $Q$ as follows:
\begin{align*}
	\xymatrix@C=6pc{ & \ar@<0.3ex>^(0.3){0}[ldd]\ar@<0.3ex>^(0.3){-2}[rdd] \textcolor{blue}{2} \ar@<0.4ex>^{-2}@/^2pc/[rdd] \ar@<0.4ex>^{0}@/_2pc/[ldd] &  &    &  \ar@<0.3ex>^(0.3){-2}[ldd]\ar@<0.3ex>^(0.3){-1}[rdd] \textcolor{blue}{2} \ar@<0.4ex>^{-1}@/^2pc/[rdd] \ar@<0.4ex>^{-2}@/_2pc/[ldd] & 
		\\
		&&&&&
		\\
		\textcolor{green}{1} \ar@<0.3ex>^{-1}[rr] \ar@<0.3ex>^(0.7){-2}[ruu] \ar@<0.4ex>^{-1}@/_2pc/[rr] \ar@<0.4ex>^{-2}@/^2pc/[ruu] &  &\textcolor{red}{3}  \ar@<0.3ex>^{-1}[ll]\ar@<0.3ex>^(0.7){0}[luu] \ar@<0.4ex>^{-1}@/^2pc/[ll] \ar@<0.4ex>^{0}@/_2pc/[luu]  &\textcolor{green}{1} \ar@<0.3ex>^{-2}[rr] \ar@<0.3ex>^(0.7){0}[ruu] \ar@<0.4ex>^{-2}@/_2pc/[rr] \ar@<0.4ex>^{0}@/^2pc/[ruu] &  &\textcolor{red}{3}  \ar@<0.3ex>^{0}[ll]\ar@<0.3ex>^(0.7){-1}[luu] \ar@<0.4ex>^{0}@/^2pc/[ll] \ar@<0.4ex>^{-1}@/_2pc/[luu] }.
\end{align*}
The  graded quiver $Q$ is given in \cite{jacquet-malo_construction_2024} as follows:
\begin{align*}
		\xymatrix@1 {
			\textcolor{red}{3} \ar@<2pt>^1[rrrr] \ar@<2pt>^0[ddrr] & & & & \textcolor{green}{1} \ar@<2pt>^1[llll] \ar@<2pt>^2[ddll] & &
			\textcolor{red}{3}  \ar@<2pt>^1[ddrr] & & & & \textcolor{green}{1} \ar@<2pt>_0[ddll]
			 \\
			\\
			& & \textcolor{blue}{2} \ar@<2pt>^2[uull] \ar@<2pt>^0[uurr] & & & & & & \textcolor{blue}{2} \ar@<2pt>^1[uull] \ar@<2pt>_2[uurr] & & \\
	        }.
\end{align*}

It is obvious that the two definitions of graded quivers are  different. One important point to note is that there are at most four arrows from vertex $i$ to vertex $j$ when both $i$ and $j$ correspond to self-folded arcs within the same $d$-gon. This differs from  Proposition 3.3 in \cite{jacquet-malo_construction_2024}. Finally, we provide an example to illustrate the difference between the two definitions of the superpotential.
\begin{example}
	Consider $(S,M,D)$ given in Figure~\ref{fig13}, there are $4$ arcs $\{1,2,3,4\}$ and $4$ pentagons while only $G_1,G_2$ contribute arrow pairs. We denote by $\{a_{12},a_{21},a_{32},a_{23},a_{13},a_{31}\}$ the arrow  pairs given in $G_1$, $\{b_{12},b_{21},b_{42},b_{24},b_{14},b_{41}\}$ the arrow  pairs given in $G_2$. Then all $3$-cycles of degree $-2$ are $a_{13}a_{32}a_{21},\  b_{12}b_{24}b_{41},\ a_{13}a_{32}b_{21}$ and $a_{12}b_{24}b_{41}$. By Definition~\ref{def:QW}, $W= a_{13}a_{32}a_{21}+ b_{12}b_{24}b_{41}$  is not the sum of all the $3$-cycles of degree $-2$, which is different from the superpotential given in  \cite[Definition~3.1]{jacquet-malo_construction_2024}.
		\begin{figure}[htpb]
		\centering
		\begin{tikzpicture}[scale=0.7]
			\draw[ultra thick](0,0) circle (3);
			\draw[ultra thick](0,0) circle (1);
			\draw   (3,0)\rn (0,3)\rn (-3,0)\rn (0,-3)\rn (1,0)\rn (0,1)\rn (-1,0)\rn  (2,2.2)\rn (-2,2.2)\rn (-2,-2.2)\rn (2,-2.2)\rn (-2.5,-1.6)\rn ; 
			\draw[red, thick](3,0)to(1,0); 
			\draw[red, thick](-3,0)to(-1,0); 
			\draw[red, thick](-3,0)to[out=40,in=180](0,2)to[out=0,in=140](3,0); 
			\draw[red, thick](-3,0)to[out=-40,in=180](2,-2.2); 
			\draw[blue] (-2,0.2)node{$1$} (2,0.2)node{$2$} (0,1.8)node{$3$} (0,-2.2)node{$4$} (-1,1)node{$G_1$}  (1.5,-1)node{$G_2$};
		  \end{tikzpicture}
		\caption{}\label{fig13}
	   \end{figure}
\end{example}
%\bibliographystyle{unsrt}
%\bibliographystyle{alpha}
%\bibliography{refs}

\textbf{Bo Le}\\
E-mail: \textsf{bole\_math@163.com}\\[0.3cm]
\textbf{Bin Zhu}\\
Department of Mathematical Sciences, Tsinghua University, Beijing, 100084, People's Republic of
China.\\
E-mail: \textsf{zhu-b@mail.tsinghua.edu.cn}
\end{document}